\newtheoremstyle{remboldstyle}
  {}{}{\itshape}{}{\bfseries}{.}{.5em}{{\thmname{#1 }}{\thmnumber{#2}}{\thmnote{ (#3)}}}
\theoremstyle{remboldstyle}
\newtheorem{thm}{Theorem}[section]
\newtheorem{prop}[thm]{Proposition}
\newtheorem{lem}[thm]{Lemma}
\newtheorem{cor}[thm]{Corollary}
\theoremstyle{definition}
\newtheorem{definition}[thm]{Definition}
\newtheorem{rem}[thm]{Remark}
\newtheorem{notation}[thm]{Notation}
\newtheorem{thmx}{Theorem}
\numberwithin{equation}{section}
\def\Mod{{\rm{Mod}}}
\def\Bbb{\mathbb}
\def\reals{\Bbb R}
\def\complex{\Bbb C}
\def\disk{\Bbb D}
\def\circle{\Bbb T}
\def\naturals{\Bbb N}
\def\integers{\Bbb Z}
\newcommand{\C}{\mathbb{C}}  % The complex numbers.
\newcommand{\Chat}{\widehat{\mathbb{C}}}
\DeclareMathOperator{\diam}{diameter}
\DeclareMathOperator{\dist}{dist}
\DeclareMathOperator{\diameter}{diam} % The distance.
\begin{document}

%%
%% The title of the paper goes here.  Edit to your title.
%%

\title[Equilateral Triangulations and Postcritical Dynamics]{Equilateral Triangulations and The Postcritical Dynamics of Meromorphic Functions}

%%
%% Now edit the following to give your name and address:
%% 

\author{Christopher J. Bishop, Kirill Lazebnik, and Mariusz Urba\'nski}

%%
%% If there is another author uncomment and edit the following.
%%

%\author{Second Author}
%\address{Department of Mathematics, University of South Carolina,
%Columbia, SC 29208}
%\email{second@math.sc.edu}
%\urladdr{www.math.sc.edu/$\sim$second}

%%
%% If there are three of more authors they are added in the obvious
%% way. 
%%

%%%
%%% The following is for the abstract.  The abstract is optional and
%%% if not used just delete, or comment out, the following.
%%%

\begin{abstract} We show that any dynamics on any planar set $S$ discrete in some domain $D$ can be realized by the postcritical dynamics of a function holomorphic in $D$, up to a small perturbation. A key step in the proof, and a result of independent interest, is that any planar domain $D$ can be equilaterally triangulated with triangles whose diameters $\rightarrow0$ (at any prescribed rate) near $\partial D$.

%Let $D\subset\Chat$ be a domain. We show that any dynamics on any sequence $S$ discrete in $D$ can be realized by the postcritical dynamics of a function holomorphic in $D$, up to a small perturbation. %Our approach relies on quasiconformal methods together with a fixpoint Theorem, and a technique of equilaterally triangulating general planar domains. 

%\subclass{MSC 30D05 \and MSC 37F10 \and 30D30}

\end{abstract}

%%
%%  LaTeX will not make the title for the paper unless told to do so.
%%  This is done by uncommenting the following.
%%

\maketitle

\footnotetext[1]{MSC 30D05 \and MSC 37F10 \and 30D30.}

%%
%% LaTeX can automatically make a table of contents.  This is done by
%% uncommenting the following:
%%

\tableofcontents

%%
%%  To enter text is easy.  Just type it.  A blank line starts a new
%%  paragraph. 
%%

\section{Introduction}
\label{introduction_section}

%\begin{rem} Unless otherwise specified, we will always consider the spherical metric when measuring the distance between any two points in $\Chat$.  If $D\subset\Chat$ is a domain, we will say a sequence $S\subset D$ is \emph{discrete} in $D$ if $S$ has no accumulation points in $D$.
%\end{rem}

%\vspace{5mm}

We begin by briefly introducing some conventions. In what follows, we will use the spherical metric $d$ to measure distance between two points on $\Chat$ (see Definition \ref{spherical_metric}). If $D\subset\Chat$ is a domain, we will say a set $S\subset D$ is \emph{discrete} in $D$ if $S$ has no accumulation points in $D$. We define the \emph{singular values} of a holomorphic function $f: D \rightarrow \Chat$ to be the set $S(f)$ of critical values and \emph{asympotic values} of $f$. A point $w\in\Chat$ is an asymptotic value of $f: D \rightarrow \Chat$ if there exists a curve \begin{equation}\nonumber \gamma:[0,\infty)\rightarrow D \textrm{ with } \gamma(t)\xrightarrow{t\rightarrow\infty}\partial D \textrm{ and } f\circ\gamma(t)\xrightarrow{t\rightarrow\infty} w. \end{equation} The \emph{postsingular set} of $f$ is defined by  \begin{equation}\nonumber P(f):=\left\{ f^n(w) : w \in S(f) \textrm{ and } n\geq0 \right\}. \end{equation}

In the study of the dynamics of a holomorphic function $f:D \rightarrow\Chat$, a fundamental role is played by the sets $S(f)$, $P(f)$, and the behavior of $f$ restricted to $P(f)$. For instance, in the most well-studied cases $D=\mathbb{C}$, $\Chat$, the boundary of any Siegel disc of $f$ is contained in $\overline{P(f)}$, and much more generally, any component in the Fatou set of $f$ always necessitates a certain behavior for the orbit of a singular value of $f$ (see Section 4.3 of \cite{MR1216719} for $D=\mathbb{C}$, and \cite{MilnorCDBook} for $D=\Chat$). Thus, the following question arises: which dynamics on which sets $S\subset D$ can be realized by the postsingular dynamics of a holomorphic function $f: D \rightarrow \Chat$? Our first result (Theorem \ref{main_thm} below) says that as long as $S\subset D$ is discrete, \emph{any} dynamics on $S$ can be realized, up to a small perturbation. Before stating this result more precisely, we need:

% any component in the Fatou set of $f$ always necessitates a certain behavior for the orbit of a singular value of $f$ (see for instance \cite{MR1626031}, \cite{MilnorCDBook}). Thus, the following question arises: which dynamics on which sets $S\subset D$ can be realized by the postsingular dynamics of a holomorphic function $f: D \rightarrow \Chat$? Our first result (Theorem \ref{main_thm} below) says that as long as $S\subset D$ is discrete, \emph{any} dynamics on $S$ can be realized, up to a small perturbation. Before stating this result more precisely, we need:

% Indeed, Theorem \ref{main_thm} answers that as long as $S\subset D$ is discrete, \emph{any} dynamics on $S$ can be realized, up to a small perturbation.

% The \emph{postcritical set} of a holomorphic map $f: D \rightarrow \mathbb{C}$ is defined as  \begin{equation} P(f):=\left\{ f^n(z) : f'(z)=0 \textrm{ and } n\geq1 \right\}. \end{equation}  Before stating our main result, we need:% \begin{equation} P(f):=\left\{ f^n(w) : w\textrm{ is a critical value or asymptotic value of } f \textrm{ and } n\geq1 \right\}. \end{equation}  Before stating our main result, we need:

%the closure of the set of critical values and asymptotic values of $f$, and denoted by $P(f)$.  Before stating our main result, we need:

% We denote the postsingular set of a holomorphic map $f$ by $P(f)$. Before stating our main result, we need:

\begin{definition} Let $\varepsilon>0$ and $X$, $Y\subset\Chat$. We say a homeomorphism $\phi: X\rightarrow Y$ is an \emph{$\varepsilon$-homeomorphism} if $\sup_{z\in X}d(\phi(z),z)<\varepsilon$. If a conjugacy $\phi$ between two dynamical systems is an $\varepsilon$-homeomorphism, we say $\phi$ is an \emph{$\varepsilon$-conjugacy}.

%If, furthermore, $\phi$ is a conjugacy between two dynamical systems $f_X: X\rightarrow X$ and $f_Y: Y\rightarrow Y$, we say that $\phi$ is an  \emph{$\varepsilon$-conjugacy}.

% between a mapping $f: X\rightarrow X$ and a mapping $g: Y \rightarrow Y$, we say that $\phi$ is an  \emph{$\varepsilon$-conjugacy}.

% homeomorphism $\phi: \Chat\rightarrow\Chat$ is an \emph{$\varepsilon$-homeomorphism} if $\sup_{z\in\Chat}d(\phi(z),z)<\varepsilon$. If $X$, $Y\subset\Chat$, a conjugacy $\psi:X\rightarrow Y$ between two dynamical systems $f: X\rightarrow X$, $g: Y\rightarrow Y$ is said to be an \emph{$\varepsilon$-conjugacy} if $\sup_{z\in X}d(\psi(z),z)<\varepsilon$. 
\end{definition}

%\begin{definition} Let $\varepsilon>0$. We say a homeomorphism $\phi: \Chat\rightarrow\Chat$ is an \emph{$\varepsilon$-homeomorphism} if $\sup_{z\in\Chat}d(\phi(z),z)<\varepsilon$. If $X$, $Y\subset\Chat$, a conjugacy $\psi:X\rightarrow Y$ between two dynamical systems $f: X\rightarrow X$, $g: Y\rightarrow Y$ is said to be an \emph{$\varepsilon$-conjugacy} if $\sup_{z\in X}d(\psi(z),z)<\varepsilon$. \end{definition}

%$||\phi(z)-z||_{L^\infty(\Chat)}<\varepsilon$

%, and we say two subsets $D$, $D'\subseteq\mathbb{C}$ are \emph{$\varepsilon$-homeomorphic} if there is an $\varepsilon$-homeomorphism $\phi$ with $\phi(D)=D'$.

%\begin{rem} Let $D\subset\Chat$ be a domain. We say a sequence $S\subset D$ is \emph{discrete} in $D$ if $S$ has no accumulation points in $D$.
%\end{rem}

% Let $\varepsilon>0$ and $S\subseteq\widehat{\mathbb{C}}$. We say a homeomorphism $\phi: S\rightarrow \phi(S)$ is an $\varepsilon-homeomorphism$ if $||\phi||_{L^\infty(S)}<\varepsilon$. Two domains $D$, $D'$ are said to be $\varepsilon$-homeomorphic if there is an $\varepsilon$-homeomorphism $\phi: \widehat{\mathbb{C}}\rightarrow\Chat$ such that $\phi(D)=D'$.

% Let $\varepsilon>0$, and $D\subseteq\widehat{\mathbb{C}}$ a domain. We say a domain $D'$ is an \emph{$\varepsilon$-perturbation} of $D$ if there is a homeomorphism $\phi:\widehat{\mathbb{C}}\rightarrow\widehat{\mathbb{C}}$ such that $\phi(D')=D$ and $||\phi||_{L^\infty(\widehat{\mathbb{C}})}<\varepsilon$. 

\begin{thmx}\label{main_thm}\emph{ Let $D\subseteq\widehat{\mathbb{C}}$ be a domain, $S\subset D$ a discrete set with $|S|\geq3$, $h: S \rightarrow S$ a map, and $\varepsilon>0$. Then there exists an $\varepsilon$-homeomorphism $\phi:\Chat\rightarrow\Chat$ and a holomorphic map $f: \phi(D) \rightarrow \widehat{\mathbb{C}}$ with no asymptotic values such that $P(f)\subset \phi(D)$ and $f|_{P(f)}: {P(f)}\rightarrow P(f)$ is $\varepsilon$-conjugate to $h: S\rightarrow S$. }%=\psi^{-1}\circ h\circ\psi$.
\end{thmx}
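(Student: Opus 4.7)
The plan is to combine the paper's equilateral triangulation theorem with a generalized Belyi (``dessin-d'enfant'') construction to manufacture $f$ with prescribed postsingular dynamics. First, apply a small $\varepsilon/3$-homeomorphism $\phi_{0}:\Chat\to\Chat$ moving each point of $S$ to a set $S':=\phi_{0}(S)$ in general position inside $D':=\phi_{0}(D)$, with conjugated dynamics $h':=\phi_{0}\circ h\circ\phi_{0}^{-1}:S'\to S'$. It then suffices to construct a holomorphic map $f:\phi(D)\to\Chat$ for some $\varepsilon$-homeomorphism $\phi$ extending $\phi_{0}$, realizing $h'$ up to a further small conjugacy.

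The central step uses the equilateral triangulation theorem to produce a labeled triangulation $\mathcal{T}$ of $D'$ with the following features: (i) its vertex set contains $S'$; (ii) the triangle diameters tend to zero sufficiently rapidly as one approaches $\partial D'$; and (iii) the faces admit a bipartite (white/black) coloring, and the vertices carry labels in $S'$ with $\ell(s)=h'(s)$ for each $s\in S'$, arranged so that every $s'\in S'$ occurs as the label of some vertex of valence strictly greater than $6$ (which will become a critical point of the forthcoming map, so that $s'$ becomes a critical value). A generalized Belyi recipe then maps each triangle conformally onto the spherical triangle spanned by the labels of its three vertices, with white and black triangles oriented oppositely so that the pieces glue conformally across shared edges. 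This yields a quasiregular map $\widetilde{f}:D'\to\Chat$; the Measurable Riemann Mapping Theorem straightens the induced conformal structure, producing a holomorphic $f:\phi(D)\to\Chat$, where $\phi$ extends $\phi_{0}$ and is close to it because the dilatation of $\widetilde{f}$ is controlled by the deviation of $\mathcal{T}$ from a Euclidean equilateral triangulation, which in turn is governed by (ii).

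By construction the critical values of $f$ lie in $\phi(S)$, and $f(\phi(s))=\phi(h(s))$ for each $s\in S$, so $\phi(S)$ is forward-invariant and equal to $P(f)$, with $f|_{P(f)}$ being $\varepsilon$-conjugate to $h$ via $\phi$. Asymptotic values are excluded by (ii) together with a combinatorial spreading of the labels: any curve $\gamma:[0,\infty)\to\phi(D)$ with $\gamma(t)\to\partial\phi(D)$ must traverse infinitely many triangles of $\mathcal{T}$, on each of which $f$ is conformal onto one of finitely many target spherical triangles, and by arranging the labeling so that preimages of any single value do not accumulate at $\partial D'$, we prevent $f\circ\gamma$ from approaching a single limit. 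The principal obstacle is (iii): the combinatorial assembly of a bipartite, labeled equilateral triangulation whose associated Belyi map realizes an arbitrary dynamics $h:S\to S$. This requires an orbit-by-orbit insertion of auxiliary vertices and triangle fans near each $s\in S'$ to produce the correct critical structure and labeling, while simultaneously respecting the bipartite coloring and the geometric diameter constraints of the triangulation theorem; a secondary technical point is controlling the spherical distance between $\phi$ and $\phi_{0}$ via a quasiconformal distortion bound drawn from (ii).
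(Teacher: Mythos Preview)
There are two substantive gaps. The first is in your ``generalized Belyi'' recipe. When you map each source triangle conformally onto the spherical triangle spanned by its vertex labels, the pieces do not glue along shared edges: two adjacent source triangles share an edge with endpoint labels $\ell(v_1),\ell(v_2)$, but the conformal parametrization of the target arc from $\ell(v_1)$ to $\ell(v_2)$ depends on the third vertex of the target triangle, which differs on the two sides. The classical Belyi construction works precisely because every target is the same triangle $(-1,1,\infty)$, so Schwarz reflection matches the boundary parametrizations. With variable targets you do not even obtain a continuous map; and if you instead use quasiconformal triangle maps forced to agree on edges, the dilatation is controlled by the shapes of the \emph{target} triangles (an uncontrolled quantity depending on $S'$), not by how close $\mathcal T$ is to Euclidean equilateral, so the straightening map will not be close to the identity.

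The second and more fundamental gap is the claimed relation $f(\phi(s))=\phi(h(s))$. Grant that you have a quasiregular $\tilde f:D'\to\Chat$ with $\tilde f(s')=h'(s')$ for $s'\in S'$ and critical values contained in $S'$. Straightening gives $f=\tilde f\circ\psi^{-1}$ for some quasiconformal $\psi$, and with $\phi=\psi\circ\phi_0$ you compute
\[
f(\phi(s))=f(\psi(s'))=\tilde f(s')=h'(s'),
\]
whereas $\phi(h(s))=\psi(h'(s'))$. These agree only if $\psi$ fixes $h'(S')$, which it has no reason to do. Equivalently, $f$ sends $\psi(S')$ into $S'$, not into $\psi(S')$, and the critical values of $f$ lie in $S'$ rather than in $\psi(S')$, so $P(f)$ cannot be identified with $\phi(S)$ and the conjugacy collapses. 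This is exactly the obstruction the paper isolates: the straightening map depends implicitly on the chosen target values, so those values cannot simply be prescribed in advance. The paper's remedy is to keep the base Belyi map branched only over $\{\pm1,\infty\}$, post-compose near a sparse set of vertices $v_s$ with quasiconformal maps sending the critical value to a parameter $\zeta_s$ (so the dilatation is supported on a set of small area and $\psi$ is uniformly close to the identity), and then invoke the Schauder--Tychonoff theorem to find parameters solving $\zeta_s=\psi(v_{h(s)})$. Your outline omits this fixed-point step, and without it the argument does not close.
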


As will be shown in Section \ref{proof_of_main_thm}, the $\varepsilon$-conjugacy between $f: P(f) \rightarrow P(f)$ and $h: S\rightarrow S$ is a composition of $\phi$ with a bijection of $S$ onto a perturbation of $S$.  When $D=\Chat$, Theorem \ref{main_thm} is very similar to Theorem 1.3 of \cite{MR4099617} (in \cite{MR4099617} the $\varepsilon$-conjugacy may be taken $=\phi$). When $D=\C$, Theorem \ref{main_thm} is very similar to Theorem 1 of \cite{MR4023391} (the difference being that functions in \cite{MR4023391} have asymptotic values and there the conjugacy $P(f)\mapsto S$ may be taken tangent to the identity at $\infty$). The main technique in \cite{MR4099617} is iteration in Teichm\"uller space, whereas in \cite{MR4023391} it is quasiconformal folding. The present manuscript provides a new approach that works simultaneously in both the settings $D=\Chat$, $\C$, as well as in much more general settings. We remark that our techniques do not answer whether for particular $S$ and $h: S\rightarrow S$ one can take $P(f)=S$ and $f|_{P(f)}=h$ (see Question 1.2 of \cite{MR4099617}). Related questions were also studied in \cite{MR1857667}, \cite{MR4216362}. We also remark that since the function $f$ of Theorem \ref{main_thm} has no asymptotic values, the postsingular set $P(f)$ coincides with the postcritical set of $f$. 

  %We also remark that the terminology of \emph{postcritical} comes from the conclusion in Theorem \ref{main_thm} that the postsingular and postcritical sets of $f$ coincide.

% function $f$ of Theorem \ref{main_thm} has no asymptotic values (see Remark \ref{no_asymp_values_remark}), and so $P(f)$ coincides with the postcritical set of $f$.

The proof of Theorem \ref{main_thm} proceeds by quasiconformally deforming a certain \emph{Belyi function} on $D$: a holomorphic map $g: D\rightarrow\Chat$ branching only over the three values $\pm1, \infty$. Given the existence of $g$, the main tools in the proof of Theorem \ref{main_thm} are the Measurable Riemann Mapping Theorem and an improvement of a fixpoint technique first introduced in \cite{MR4023391} (see also \cite{MR4008367}, \cite{MR4273483}). The existence of a Belyi function on $D$, on the other hand, will follow from the existence of a particular \emph{equilateral triangulation} of the domain $D$: a topological triangulation of $D$ with the property that for any two adjacent triangles $T, T'$, there is an anti-conformal reflection map $T\rightarrow T'$ which fixes pointwise the common edge (see Definitions \ref{equil_triang_defn}, \ref{equil_triang_def}). Indeed, given an equilateral triangulation $\mathcal{T}$ of $D$, after subdividing the equilateral triangulation if necessary (see Remark \ref{subdivision_remark}), a conformal map of a triangle $T\in\mathcal{T}$ to $\mathbb{H}$ (with the vertices of the triangle mapping to $\pm1$, $\infty$) may be extended to a Belyi function on $D$ by the Schwarz reflection principle. The connection between equilateral triangulations and Belyi functions was first described in \cite{MR988486}. The existence of the desired equilateral triangulation of $D$ will follow from the following Theorem, where we recall that the degree of a vertex $v$ in a triangulation $\mathcal{T}$ is defined as the number of edges in $\mathcal{T}$ having $v$ as a vertex:

%The proof of Theorem \ref{main_thm} proceeds by quasiconformally perturbing a certain carefully chosen holomorphic function $g: D\rightarrow\Chat$. Given the existence of $g$, the main tools in the proof of Theorem \ref{main_thm} are the Measurable Riemann Mapping Theorem and an improvement of a fixpoint technique first introduced in \cite{MR4023391}. The existence of $g: D\rightarrow\Chat$, on the other hand, follows from the existence of a particular \emph{equilateral triangulation} of the domain $D$: a topological triangulation of $D$ with the property that for any two adjacent triangles $T, T'$, there is an anti-conformal map $T\mapsto T'$ which fixes pointwise the common edge. This means that by the Schwarz reflection principle, any triangle $T$ and any vertex-preserving conformal map $T\mapsto \mathbb{H}(-1,1,\infty)$ defines a holomorphic map $g: D\rightarrow\Chat$ branched over the three values $\pm1, \infty$. Such a function is called a \emph{Belyi function}, and the connection between equilateral triangulations and Belyi functions was first described in \cite{MR988486}. The existence of the desired equilateral triangulation of $D$ will follow from:

\begin{thmx}\label{theorem_B}\emph{Let $D \subset \widehat \complex$
be a domain. Suppose $\eta: [0,\infty) \rightarrow [0,\infty)$ is  continuous, 
strictly increasing, and $\eta(0) =0$. 
Then there exists an equilateral
triangulation ${\mathcal T}$ of
$D$  so that for every $z \in D$ and 
every triangle $T \in {\mathcal T}$ 
containing $z$ we have 
\begin{eqnarray}\label{eta bound}
\diam(T) \leq \eta(d(z,\partial D)).
\end{eqnarray} 
Moreover, the degree of any vertex $v$  is bounded, independently of 
$v$, $D$ and $\eta$. }
\end{thmx}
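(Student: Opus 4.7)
The plan is to construct the equilateral triangulation in three conceptual steps: first, build a topological triangulation $\mathcal T_0$ of $D$ with controlled diameter, aspect ratios, and vertex degrees via a Whitney-type argument; second, interpret $\mathcal T_0$ as an abstract flat cone surface $\Sigma$ in which each combinatorial triangle is realized as a standard Euclidean equilateral triangle glued to its neighbors by Euclidean isometries; third, uniformize $\Sigma$ conformally onto $D$ and pull the equilateral triangulation of $\Sigma$ back to $D$. The crucial point is that the anti-conformal reflection property in the definition of equilateral triangulation is preserved under conformal maps, so the pullback is itself an equilateral triangulation of $D$.

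For the Whitney step, set $r(z) := \eta(d(z, \partial D))/C$ for a large universal constant $C$, and produce a locally finite triangulation $\mathcal T_0$ of $D$ by Euclidean triangles satisfying: (i) bounded aspect ratios (each is $K$-bilipschitz to a standard equilateral triangle for universal $K$); (ii) adjacent triangles of comparable size; (iii) every triangle containing $z$ has diameter at most $r(z)$; and (iv) every vertex has valence at most a universal $N$. Property (iv) is maintained by inserting small ``adapter'' blocks of triangles where Whitney scales change; since adjacent scales differ by a bounded factor, only finitely many combinatorial adapter types are required. This step is a variation on the classical Whitney decomposition.

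Gluing unit equilateral Euclidean triangles according to the combinatorics of $\mathcal T_0$ produces the abstract flat cone surface $\Sigma$, which is homeomorphic to $D$. The piecewise-affine identification $h: D \to \Sigma$ sending each triangle of $\mathcal T_0$ bilipschitz onto its equilateral counterpart is globally quasiconformal, with dilatation bounded in terms of $K$ alone; thus $\Sigma$ and $D$ are quasiconformally equivalent. For simply connected $D$, this forces them to be conformally equivalent, since there are only two conformal classes of non-compact simply connected Riemann surfaces and QC equivalence preserves these. Let $\psi: \Sigma \to D$ be the conformal equivalence; then $\mathcal T := \psi(\mathcal T_\Sigma)$ is the desired equilateral triangulation. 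The degree bound $N$ passes through $\psi$ unchanged, and the estimate $\diam(T) \leq \eta(d(z, \partial D))$ for $T\ni z$ follows by combining the bounded QC distortion of $\psi \circ h^{-1}: D \to D$ with property (iii) and a sufficiently large choice of $C$.

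The main obstacle is the conformal uniformization step when $D$ is not simply connected: QC equivalence no longer automatically implies conformal equivalence, and the abstract surface $\Sigma$ must additionally match the conformal moduli of the annular ends of $D$ (or, more generally, its circle-domain representation). My approach here is to build flexibility into the Whitney step (allowing the triangulation to have tunable parameters while staying within the universal $K$, $N$ bounds) and then use a continuity or fixed-point argument to select the specific $\mathcal T_0$ whose abstract surface $\Sigma$ realizes the conformal moduli of $D$. A secondary technicality is confirming that the diameter bound survives the conformal pullback $\psi$; this follows from Mori-type distortion estimates for QC self-maps of $D$ together with the fact that the triangles of $\mathcal T_0$ already shrink in controlled fashion near $\partial D$, so any bounded conformal distortion is absorbed into the universal constant $C$.
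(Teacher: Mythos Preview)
Your approach has two genuine gaps. First, the uniformization step: you acknowledge that for multiply connected $D$ the abstract surface $\Sigma$ need not be conformally equivalent to $D$, and propose a ``continuity or fixed-point argument'' to tune $\mathcal T_0$ so that the moduli match. But for a general planar domain (for instance $D = \Chat$ minus a Cantor set) the moduli space is infinite-dimensional, and you give no indication of a parameter space on which to run such an argument, nor any compactness or continuity property that would make it work. This step is essentially equivalent to the Bishop--Rempe theorem that every non-compact Riemann surface is equilaterally triangulable, which is itself nontrivial; your sketch does not supply an independent proof.

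Second, even granting a conformal $\psi:\Sigma\to D$, the diameter bound does not survive the pullback as you claim. The map $F=\psi\circ h^{-1}:D\to D$ is $K$-quasiconformal with $K$ universal, but a $K$-quasiconformal self-map of $D$ can be arbitrarily far from the identity (any M\"obius automorphism of $\mathbb{D}$; any Dehn twist of an annulus). Mori's inequality gives only a global H\"older bound with exponent $1/K$, and quasi-isometry of the hyperbolic metric gives only that $d(F(z),\partial D)$ is comparable to a \emph{power} of $d(z,\partial D)$; neither yields $\diam(F(T_0))\le\eta(d(F(z_0),\partial D))$ for general $\eta$ (take $\eta(t)=e^{-1/t}$), and no choice of the constant $C$ can absorb a fixed H\"older loss. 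The paper avoids both problems by an entirely different, iterative construction: it surrounds $\partial D$ by nested polygonal contours $\Gamma_n$ at distance $\asymp 16^{-n}$, fills each annular layer with a genuine Euclidean equilateral grid as fine as needed, merges consecutive layers across very thin ``conformal grid annuli'', and applies at each stage a quasiconformal correction $\phi_n$ whose dilatation is supported on a set of arbitrarily small area and is therefore uniformly close to the identity. Thus no global moduli ever need matching, and the diameter bound is preserved stage by stage.
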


The existence of an equilateral triangulation of $D$ is already implied by the recent result of \cite{2021arXiv210316702B}: that any non-compact Riemann surface can be equilaterally triangulated. In order to prove Theorem \ref{main_thm}, however, we will need to prove that the triangulation can also be taken to satisfy the condition (\ref{eta bound}). We remark that diameter in (\ref{eta bound}) refers to spherical diameter.

Theorem \ref{theorem_B} is a key step in the proof of Theorem \ref{main_thm}, but it is also of independent interest. As already partially alluded to, by \cite{MR988486} a Riemann surface $X$ has an equilateral triangulation if and only if it has a Belyi function $g: X \rightarrow \Chat$, in which case $g^{-1}([-1,1])$ is a so-called \emph{dessin d'enfant} on $X$. There is an extensive literature on dessins d'enfants (see \cite{MR2036721} for an overview), and of recent interest is the question of which geometries on a given Riemann surface a dessin may achieve. For instance, \cite{MR3232011} shows that unicellular dessins d'enfants are dense in all planar continua. Condition (\ref{eta bound}) is equivalent to a certain geometry for the corresponding dessin, and it is likely the techniques used in proving (\ref{eta bound})  will be of use in the question of attainable geometries for a dessin d'enfant on a given Riemann surface. 

We now briefly outline the paper. In Section \ref{outline} we will sketch the proofs of Theorems \ref{main_thm}, \ref{theorem_B}. In Sections \ref{moving}-\ref{proof_of_main_thm}, we prove Theorem \ref{main_thm} by first assuming Theorem \ref{theorem_B}, and in Sections \ref{conformal_grid_annuli}-\ref{Triangulating_domains} we prove Theorem \ref{theorem_B}. Sections \ref{conformal_grid_annuli}-\ref{Triangulating_domains} may be read independently of Sections \ref{moving}-\ref{proof_of_main_thm}. We will give a more detailed outline of the paper after sketching the main proofs in Section \ref{outline}.

%Lastly, we remark that there is a natural interest in a version of Theorem \ref{theorem_B} where $\Chat$ is replaced by an arbitrary compact Riemann surface. For instance, studying the case of a torus $\mathbb{T}$ where $D$ is $\mathbb{T}$ minus finitely many points may lead to a version of Theorem \ref{main_thm} for elliptic (doubly periodic) functions $\mathbb{C}$

%: namely the diameters of edges in the dessin $\rightarrow0$ near the boundary. 

%\vspace{10mm} 

%In Section \ref{outline}, we will sketch the proofs of Theorems \ref{main_thm}, \ref{theorem_B} and give an outline of the paper. 

\vspace{2mm}

\noindent \emph{Acknowledgements.} The authors would like to thank the anonymous referee for their suggestions which led to an improved version of the manuscript.

\section{Sketch of the Proofs}\label{outline}

%We conclude the Introduction with a sketch of the proofs of Theorems \ref{main_thm}, \ref{theorem_B}. We begin with Theorem \ref{main_thm}, where the main ideas are already present in the case $D=\Chat$, and we discuss this case first.

%Theorem \ref{theorem_B} is closely related to the main result of \cite{2021arXiv210316702B}: that any non-compact Riemann surface can be equilaterally triangulated. Thus the existence  In order to prove Theorem \ref{main_thm}, triangulations needed to prove Theorem \ref{main_thm}

In this Section, we sketch the proofs of Theorems \ref{main_thm}, \ref{theorem_B}. We begin with Theorem \ref{main_thm}, where the main ideas are already present in the case $D=\Chat$, and we discuss this case first.

Consider a sequence of equilateral triangulations $\mathcal{T}_n$ of $\Chat$ satisfying \begin{equation}\label{shrinking_diameters} \sup_{T\in\mathcal{T}_n} \diam(T) \xrightarrow{n\rightarrow\infty}0. \end{equation} The existence of $\mathcal{T}_n$ is trivial: see for instance Figure \ref{fig:subdivisionpicture.png}. As described above, any triangle $T\in\mathcal{T}_n$ and any vertex-preserving conformal map $T\mapsto \mathbb{H}(-1,1,\infty)$ (vertex-preserving means the three vertices of $T$ map to $\pm1$, $\infty$ under the conformal map) defines a holomorphic map $g: \Chat\rightarrow\Chat$.

%{\color{red} replace with a particular choice} Consider the  triangulation $\mathcal{T}_0$ of $\Chat$ which consists of two triangles: $\mathbb{H}(-1,1,\infty)$ and $-\mathbb{H}(-1,1,\infty)$. For $n>0$, let $\mathcal{T}_n$ denote the \emph{barycentric subdivision} of $\mathcal{T}_{n-1}$ (see Figure \ref{fig:subdivisionpicture.png}): obtained by a certain subdivision of each triangle of $\mathcal{T}_{n-1}$. Each $\mathcal{T}_n$ is an equilateral triangulation of $\Chat$, as defined above. And as described above, any triangle $T\in\mathcal{T}_n$ and any vertex-preserving conformal map $T\mapsto \mathbb{H}(-1,1,\infty)$ defines a holomorphic map $g: \Chat\mapsto\Chat$.

\begin{figure}[ht!]
{\includegraphics[width=1\textwidth]{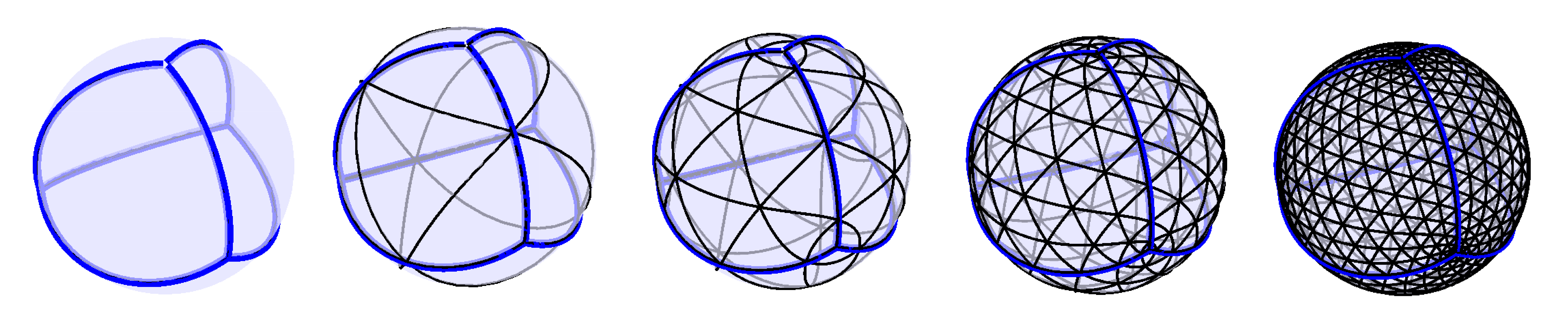}}
\caption{Illustrated is a sequence of triangulations $\mathcal{T}_n$ of $\Chat$. $\mathcal{T}_0$ is the tetrahedral subdivision of $\Chat$, and $\mathcal{T}_n$ is obtained from $\mathcal{T}_{n-1}$ by connecting the centers of each edge in each triangle in $\mathcal{T}_{n-1}$. }
\label{fig:subdivisionpicture.png}      
\end{figure}

The critical points of $g$ are precisely the vertices in the triangulation $\mathcal{T}_n$, and the critical values of $g$ are $\pm1, \infty$. For any vertex $v\in\mathcal{T}_n$, let $\mathcal{T}_{\{v\}}$  denote the union of triangles in $\mathcal{T}_n$ which have $v$ as a vertex. We can change the definition of $g|_{\mathcal{T}_{\{v\}}}$ to a map $\tilde{g}|_{\mathcal{T}_{\{v\}}}$  by post-composing $g|_{\mathcal{T}_{\{v\}}}$ with a quasiconformal map of $\Chat$ which perturbs the critical value $g(v)\in\{\pm1,\infty\}$ to a parameter $\tilde{g}(v)\in\Chat$, in such a way that $\tilde{g}|_{\partial\mathcal{T}_{\{v\}}}=g|_{\partial\mathcal{T}_{\{v\}}}$. Doing so over a sparse subset of vertices in $\mathcal{T}_n$, we call this new quasiregular map $\tilde{g}: \Chat\rightarrow\Chat$. 

Given a discrete (finite) $S\subset\Chat$ and a map $h:S \rightarrow S$, we choose a vertex $v_s\in\mathcal{T}_n$ nearby each $s\in S$, and consider the family of mappings $\tilde{g}$ determined by a choice of $(\tilde{g}(v_s))_{s\in S}$. Each such choice $(\tilde{g}(v_s))_{s\in S}$ determines a holomorphic map $f:=\tilde{g}\circ\phi^{-1}$, where $\phi$ is a quasiconformal mapping obtained from the Measurable Riemann Mapping theorem. In order to obtain the conjugacy between $f: P(f) \rightarrow P(f)$ and $h:S\rightarrow S$, the main idea (see also Figure \ref{fig:fixpoint_picture}) is to justify that we can choose $(\tilde{g}(v_s))_{s\in S}$ so that \begin{equation}\label{want_this_intro} \tilde{g}(v_s)=\phi(v_{h(s)}), \textrm{ for all } s\in S.\end{equation} Indeed, suppose we have the relation (\ref{want_this_intro}), and assume for simplicity that $h$ is onto. Then we would have \begin{equation} P(f)=\tilde{g}\left((v_s)_{s\in S}\right)=\phi\left((v_{h(s)})_{s\in S}\right) = \phi\left((v_s)_{s\in S}\right), \end{equation} and the desired conjugacy between $f: P(f) \rightarrow P(f)$ and $h:S\rightarrow S$ would be defined by $\phi(v_{s})\mapsto s$, since: \begin{equation} f\left( \phi(v_{s}) \right)  = \tilde{g}\circ\phi^{-1} \circ \phi(v_{s})  = \tilde{g}(v_{s}) = \phi(v_{h(s)}).  \end{equation} That we can choose each $\tilde{g}(v_s)$ so that (\ref{want_this_intro}) holds is non-trivial. The dilatation of $\tilde{g}$, and hence the mapping $\phi$, depends on the parameter $\tilde{g}(v_s)$ in a non-explicit manner (by solution of the Beltrami equation). Nevertheless, we can show the desired choice of $\tilde{g}(v_s)$ exists by application of a fixpoint theorem, where the variable is the set of parameters $\tilde{g}(v_s)$ and the output is the set of points $\phi(v_{h(s)})$. Moreover, if $n$ is large, the triangulation $\mathcal{T}_n$ is fine by (\ref{shrinking_diameters}) and the dilatation of $\phi$ small, so that $\phi(v_{s})\approx v_s\approx s$, and hence the conjugacy is close to the identity. Much of the technical work in Sections \ref{moving}-\ref{proof_of_main_thm} is in setting up the parameters $n$, $\tilde{g}(v_s)$ so that the hypotheses of an appropriate fixpoint theorem hold.

The crucial property of the domain $D=\Chat$ that was used in the above sketch was the existence of the equilateral triangulations $\mathcal{T}_n$ of $D$. While this property is trivial in the cases $D=\Chat$, $D=\mathbb{C}$ and it is well known in many other cases, it is non-trivial in the general setting. This is the content of Theorem \ref{theorem_B}. The main idea of the proof of Theorem \ref{theorem_B} is as follows. Assume $\infty\in D$, and let $K:=\partial D$. We consider sets $\Gamma_k$ which are contours surrounding $K$ (see Figure \ref{Whitney}). The desired triangulation $\mathcal{T}$ is produced by an inductive procedure. 
Roughly speaking, at the $k^{\textrm{th}}$ step we define the triangulation ${\mathcal T}_k$ to
equal the previous triangulation ${\mathcal T}_{k-1}$
outside $\Gamma_{k}$ 
and equal a Euclidean equilateral triangulation 
inside $\Gamma_{k}$. However, these two triangulations 
need to be merged in a very thin neighborhood
of $\Gamma_{k}$ (with a non-equilateral triangulation) and a quasiconformal correction 
is then applied to make the merged  triangulation 
equilateral. The dilatation of the correction map is 
supported in a thin neighborhood of $\Gamma_{k}$, and 
is chosen so thin that so the correction map is 
close to the identity. The desired triangulation $\mathcal{T}$ is then the limit of the triangulations $\mathcal{T}_k$ as $k\rightarrow\infty$.

%In each of the domains $U_n:=\textrm{int}(\Gamma_n)\setminus \textrm{int}(\Gamma_{n-1})$ we place the set of small Euclidean equilateral triangles (smaller as $n\rightarrow\infty$) contained in $U_n$. In the ``in-between'' regions we interpolate between the different triangulations. 

We now give a detailed outline of the rest of the paper. In Section \ref{moving} we describe how we will change the map $g|_{\mathcal{T}_{\{v\}}}$ to the map $\tilde{g}|_{\mathcal{T}_{\{v\}}}$, introducing the parameters $\tilde{g}(v_s)$. In Section \ref{equil_triang_sec}, we deduce from Theorem \ref{theorem_B} the only result (Theorem \ref{desired_triangulation}) about equilateral triangulations we will need in order to prove Theorem \ref{main_thm}. In Section \ref{abasefamily}, we introduce the family of mappings amongst which we will find our desired fixpoint, and prove some estimates about this family. In Sections \ref{finding} and \ref{proof_of_main_thm}, we conclude the proof of Theorem \ref{main_thm} (modulo the proof of Theorem \ref{theorem_B}) by applying a fixpoint theorem. In Section \ref{conformal_grid_annuli} we introduce the regions in which we will merge equilateral triangulations, and we triangulate them in Section \ref{Triangulating_strips}. In Section \ref{Triangulating_domains} we construct the contours $\Gamma_k$ surrounding $K$ and prove Theorem \ref{theorem_B}.

\section{Moving a Critical Value}
\label{moving}

In this short Section we set up the framework we will need in order to be able to perturb the critical values of the function $g$ described in the Introduction. First we recall the definition of the spherical metric (see Section I.1.1 of \cite{MR0344463}):

\begin{definition}\label{spherical_metric} Two finite points $z_1$, $z_2\in\mathbb{C}$ have spherical distance \begin{equation} d(z_1, z_2):=\arctan\left|\frac{z_1-z_2}{1+\overline{z_1}z_2}\right| \textrm{ where } 0 \leq d(z_1, z_2) \leq \pi/2\textrm{,} \end{equation} and $d(z_1,\infty)=\arctan\left|1/z_1\right|$.
\end{definition}

\noindent We will use the basic theory of quasiconformal mappings throughout this paper, for which we refer the reader to the standard references \cite{Ahlfors-QCbook} and \cite{MR0344463}. 

\begin{notation} If $\phi$ is a quasiconformal mapping, we will denote its Beltrami coefficient $\phi_{\overline{z}}/\phi_z$ by $\mu(\phi)$. 
\end{notation}

\begin{definition}\label{pertubation_maps} For $w\in\{\pm1,\infty\}$, let $I_w$ be the subarc of $\hat{\mathbb{R}}:=\mathbb{R}\cup\{\infty\}$ with endpoints in $\{\pm1,\infty\}\setminus\{w\}$ which does not pass through $w$ (so for instance, $I_{-1}=(1,\infty)$). Given $w\in\{\pm1,\infty\}$ and $\zeta\in\Chat$ satisfying $d(\zeta,I_w)\geq\pi/12$, we will define a quasiconformal map $\phi^\zeta_{w}: \Chat\rightarrow\Chat$ as follows. Let \begin{enumerate} \item $\phi^\zeta_w: B(w,\pi/24)\rightarrow B(\zeta,\pi/24)$ be the restriction to $B(w,\pi/24)$ of an isometry of $\hat{\mathbb{C}}$ mapping $w$ to $\zeta$, \item $\phi^\zeta_{w}(z)=z$ for $z \in I_w$, \item $\phi^\zeta_{w}(z)$ is a smooth interpolation between (1) and (2) on $\hat{\mathbb{C}}\setminus\left(I_w\cup B(w,\pi/24)\right)$, and \item $\mu(\phi^\zeta_{w})$ varies smoothly with respect to $\zeta$.
\end{enumerate}
\end{definition}

The mapping $\phi^\zeta_w$ of Definition \ref{pertubation_maps} exists, and we make note of the following:

\begin{rem}\label{rem_perturbation_covering} The constant $\pi/12$ in Definition \ref{pertubation_maps} is chosen because $\pi/6=2\pi/12$, and \begin{equation}\label{perturbation_covering}\bigcup_{w\in\{\pm1,\infty\}} \left\{\zeta\in\Chat: d(\zeta,I_w)\geq\pi/6\right\} = \Chat. \end{equation} This fact will be important in the proof of Theorem \ref{main_thm}. %(see the proof of Theorem \ref{vertex_choice}.)
\end{rem}

\begin{prop}\label{dilatation_constant} There exists $0<k_0<1$ such that for any $\zeta\in\Chat$, there is $w\in\{\pm1, \infty\}$ such that $||\mu(\phi^\zeta_w)||_{L^\infty(\Chat)}<k_0$.
\end{prop}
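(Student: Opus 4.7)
The strategy is to combine the covering property of Remark \ref{rem_perturbation_covering} with a compactness argument based on the smooth dependence in part (4) of Definition \ref{pertubation_maps}. The key point is that for each fixed $w$, the parameter $\zeta$ ranges over a compact set on which $\zeta \mapsto \mu(\phi^\zeta_w)$ depends continuously; since each individual Beltrami coefficient has $L^\infty$ norm strictly less than $1$, compactness upgrades this to a uniform bound.

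For each $w \in \{\pm 1, \infty\}$, I would set $K_w := \{\zeta \in \Chat : d(\zeta, I_w) \geq \pi/6\}$, which is compact, and note that $\phi^\zeta_w$ is defined for all $\zeta \in K_w$ (since $\pi/6 > \pi/12$). By parts (1) and (2) of the definition, $\mu(\phi^\zeta_w)$ vanishes on $B(w,\pi/24)$ and on $I_w$, so its support lies in the fixed compact set $A_w := \Chat \setminus (I_w \cup B(w,\pi/24))$, where $\phi^\zeta_w$ is a smooth diffeomorphism by part (3). A smooth diffeomorphism of a compact region has dilatation bounded strictly below $1$, so $\|\mu(\phi^\zeta_w)\|_{L^\infty(\Chat)} < 1$ for each individual $\zeta \in K_w$.

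Next, I would invoke part (4): smooth (hence continuous) dependence of $\mu(\phi^\zeta_w)$ on $\zeta$ combined with smoothness in $z$ on the fixed compact set $A_w$ gives joint continuity of $(z,\zeta) \mapsto \mu(\phi^\zeta_w)(z)$ on $A_w \times K_w$, and by uniform continuity the function $\zeta \mapsto \|\mu(\phi^\zeta_w)\|_{L^\infty(\Chat)} = \sup_{z \in A_w} |\mu(\phi^\zeta_w)(z)|$ is continuous on $K_w$. Compactness of $K_w$ then yields that the supremum $k_w := \sup_{\zeta \in K_w} \|\mu(\phi^\zeta_w)\|_{L^\infty(\Chat)}$ is attained, and so $k_w < 1$. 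I would then define
\begin{equation}
k_0 := \tfrac{1}{2}\bigl(1 + \max_{w \in \{\pm 1, \infty\}} k_w\bigr) < 1.
\end{equation}
Given any $\zeta \in \Chat$, Remark \ref{rem_perturbation_covering} provides $w \in \{\pm 1, \infty\}$ with $\zeta \in K_w$, and then $\|\mu(\phi^\zeta_w)\|_{L^\infty(\Chat)} \leq k_w < k_0$.

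The only subtle step is the continuity of the $L^\infty$ norm in $\zeta$, which is not automatic for general measurable families but follows here from the explicit piecewise-smooth construction. If part (4) of Definition \ref{pertubation_maps} is interpreted only pointwise in $z$, one should verify this by examining the explicit interpolation used to construct $\phi^\zeta_w$ (e.g., a bump-function convex combination between the isometry and the identity); I expect this to be the only real point requiring care, and it is essentially a bookkeeping matter rather than a genuine difficulty.
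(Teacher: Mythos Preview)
Your proposal is correct and follows essentially the same approach as the paper: use the continuous dependence of $\mu(\phi^\zeta_w)$ on $\zeta$ from part (4) of Definition \ref{pertubation_maps}, take a supremum over the compact parameter set of admissible $\zeta$ for each fixed $w$, and then invoke the covering in Remark \ref{rem_perturbation_covering}. The only cosmetic differences are that you work over the smaller compact $K_w=\{\zeta:d(\zeta,I_w)\ge\pi/6\}$ (the paper uses the larger set $\{d(\zeta,I_w)\ge\pi/12\}$, which of course also works), and you spell out in more detail why $\zeta\mapsto\|\mu(\phi^\zeta_w)\|_{L^\infty}$ is continuous, a point the paper leaves implicit.
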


\begin{proof} Fix $w\in\{\pm1,\infty\}$. There exists $\zeta\in\Chat$ satisfying $d(\zeta,I_w)\geq\pi/12$. Fix such a $\zeta$. We have that $\phi^\zeta_w$ is a quasiconformal mapping, and moreover $\mu(\phi^\zeta_w)$ varies continuously with respect to $\zeta$ by (4) of Definition \ref{pertubation_maps}. Thus, as $||\mu(\phi^\zeta_w)||_{L^\infty(\Chat)}<1$ for each $\zeta$ satisfying $d(\zeta,I_w)\geq\pi/12$, we have that \[ \sup_{\zeta\in\{\zeta \hspace{.5mm}: \hspace{.5mm} d(\zeta,I_w)\geq\pi/12\}}||\mu(\phi^\zeta_w)||_{L^\infty(\Chat)} < 1. \] The result now follows from (\ref{perturbation_covering}).

% independently of $w$, $\zeta$. Thus the result follows from (\ref{perturbation_covering}).
\end{proof}

%For $w\in\{w\in\Chat: d(w,(1,\infty))>\varepsilon\}$, we will define a quasiconformal map $\phi^w_{-1}: \Chat\rightarrow\Chat$ as follows. Let \begin{enumerate} \item $\phi_w(z)=z+(w+1)$ for $z\in B(-1,\varepsilon/2)$, \item $\phi_w(z)=z$ for $z \in (1,\infty)$, and \item $\phi_w(z)$ interpolate between (1) and (2) elsewhere. \end{enumerate}

% and $w\in\{w\in\Chat: d(w,(1,\infty))>\varepsilon\}$, we will define a quasiconformal map $\phi^w_{-1}: \Chat\rightarrow\Chat$ as follows. Let \begin{enumerate} \item $\phi_w(z)=z+(w+1)$ for $z\in B(-1,\varepsilon/2)$, \item $\phi_w(z)=z$ for $z \in (1,\infty)$, and \item $\phi_w(z)$ interpolate between (1) and (2) elsewhere. \end{enumerate}

\section{Equilateral Triangulations}\label{equil_triang_sec}

In this Section, we will deduce from Theorem \ref{theorem_B} the only result (Theorem \ref{desired_triangulation}) we will need about equilateral triangulations in order to prove Theorem \ref{main_thm}. First we fix our definitions and some notation: 

\begin{definition}\label{equil_triang_defn} Let $D\subset\Chat$ be a domain. A \emph{triangulation} of $D$ is a countable and locally finite collection of closed topological triangles in $D$ that cover $D$, such that two triangles intersect only in a full edge or at a vertex.
\end{definition}

\begin{definition}\label{equil_triang_def} Let $D\subset\Chat$ be a domain, and $\mathcal{T}$ a triangulation of $D$. We say $\mathcal{T}$ is an \emph{equilateral triangulation} if for any two triangles $T$, $T'$ in $\mathcal{T}$ which share an edge $e$, there is an anti-conformal map of $T$ onto $T'$ which fixes pointwise the edge $e$ and sends the vertex opposite $e$ in $T$ to the vertex opposite $e$ in $T'$.
\end{definition}

\begin{rem} Definition \ref{equil_triang_def} readily generalizes to a definition of equilateral triangulations for Riemann surfaces. If a Riemann surface $S$ is built by gluing together Euclidean equilateral triangles, then the corresponding triangulation of $S$ satisfies Definition \ref{equil_triang_def}. The converse is also true. In other words, if a triangulation of a Riemann surface $S$ satisfies Definition \ref{equil_triang_def}, then $S$ can be constructed by gluing together Euclidean equilateral triangles (finitely many triangles if $S$ is compact, countably many if $S$ is non-compact). This justifies the terminology ``equilateral triangulation'' of Definition \ref{equil_triang_def}. See \cite{MR988486} or \cite{2021arXiv210316702B} for details.
\end{rem}

\begin{definition}\label{requestedadjacentdefn} Let $\mathcal{T}$ be a triangulation of a domain $D$. We say that two vertices $v$, $w \in \mathcal{T}$ are \emph{adjacent} if they are connected by an edge in $\mathcal{T}$. Otherwise we say $v$, $w$ are \emph{non-adjacent}. Similarly, two triangles in $\mathcal{T}$ are said to be \emph{adjacent} if they share a common edge, otherwise they are said to be \emph{non-adjacent} (in particular two triangles which intersect only at a vertex are non-adjacent).
\end{definition}

\begin{notation} Given a subset $\mathcal{V}$ of vertices in a triangulation $\mathcal{T}$, we will denote by $\mathcal{T}_\mathcal{V}$ the union of those triangles in $\mathcal{T}$ with at least one vertex in $\mathcal{V}$. In what follows, area will refer to spherical area, and $\diam$ to spherical diameter.  %Similarly, given a vertex $v\in\mathcal{T}_n$, we denote by $\mathcal{T}_v$ those triangles in $\mathcal{T}_n$ with vertex $v$. 
\end{notation}

%\begin{thm}\label{desired_triangulation} Let $D\subset\Chat$ be a domain. Then there exists a sequence of equilateral triangulations $\{\mathcal{T}_n\}_{n=1}^\infty$ of $D$ satisfying the following properties: \begin{enumerate} \item If $\mathcal{A}_n$ is any collection of triangles in $\mathcal{T}_n$ such that for each $T\in \mathcal{A}_n$ there is a unique $T'\in \mathcal{A}_{n+1}$ with $T'\subset T$, then \begin{equation} \sum_{T \in \mathcal{A}_n}\emph{area}\left(T\right) \xrightarrow{n\rightarrow\infty} 0. \end{equation} \item Let $\varepsilon>0$. Then there exists an $N\in\mathbb{N}$ such that if $n\geq N$ and $z\in D$, the ball $B(z,\varepsilon)$ contains a triangle in $\mathcal{T}_n$.  \item Let $\varepsilon>0$. Then there exists an $N\in\mathbb{N}$ such that if $n\geq N$ and $z\in \partial D$, the ball $B(z,\varepsilon)$ contains infinitely many distinct triangles in $\mathcal{T}_n$.    \end{enumerate}

\begin{thm}\label{desired_triangulation} Let $D\subset\Chat$ be a domain and $S$ a discrete set in $D$. Then there exists a sequence of equilateral triangulations $\{\mathcal{T}_n\}_{n=1}^\infty$ of $D$ and a collection of pairwise non-adjacent triangles $\{T_s^n\}_{s\in S} \subset \mathcal{T}_n$ for each $n$ satisfying: \begin{enumerate}  \item $s\in T_s^n$ for all $s \in S$ and $n\in\mathbb{N}$,  \item  For any choice of vertices $v_s^n \in T_s^n$ we have: \begin{equation} \sum_{s \in S}\emph{area}\left( \mathcal{T}_{\{v_s^n\}} \right) \xrightarrow{n\rightarrow\infty} 0, \textrm{ and } \end{equation}  \item \begin{equation}\label{small_triangles} \sup_{s\in S}\diam(T_s^n)\xrightarrow{n\rightarrow\infty}0. \end{equation} \end{enumerate}

\end{thm}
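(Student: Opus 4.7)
Theorem \ref{desired_triangulation} should follow directly from Theorem \ref{theorem_B}, by applying the latter with a rate function $\eta = \eta_n$ chosen so aggressively that every triangle near a point of $S$ is forced to be tiny. The diameter bound (\ref{eta bound}) coupled with the uniform degree bound then automatically delivers all three desired conclusions.

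Enumerate $S = \{s_j\}_{j \in J}$ with $J \subseteq \mathbb{N}$, and set $r_j := d(s_j,\partial D)$ and $d_j := d(s_j, S\setminus\{s_j\})$; both are positive since $S$ is discrete in $D$. For each $n \geq 1$ I would construct a strictly increasing continuous $\eta_n : [0,\infty) \to [0,\infty)$ with $\eta_n(0)=0$ satisfying
\begin{equation}\label{plan-eta-bound}
\eta_n(2 r_j) \leq \frac{1}{n}\min\!\bigl(2^{-j/2},\; d_j/8,\; r_j\bigr) \text{ for every } j \in J.
\end{equation}
Since (\ref{plan-eta-bound}) only constrains $\eta_n$ at a countable set of points, any strictly increasing continuous function lying below the obvious staircase upper envelope works; constructing one is a straightforward interpolation exercise. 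Apply Theorem \ref{theorem_B} with $\eta_n$ to obtain an equilateral triangulation $\mathcal{T}_n$ of $D$ whose vertex degrees are bounded by a universal constant $K$, and for each $s_j \in S$ choose an arbitrary triangle $T_{s_j}^n \in \mathcal{T}_n$ containing $s_j$ (picking one if $s_j$ happens to lie on an edge or vertex).

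Item (1) holds by construction. For item (3), (\ref{eta bound}) applied at $z=s_j$ gives $\diam(T_{s_j}^n) \leq \eta_n(r_j) \leq 2^{-j/2}/n$, so $\sup_j \diam(T_{s_j}^n) \to 0$. For pairwise non-adjacency, any two of the chosen triangles sharing even a single vertex would intersect, forcing
\begin{equation*}
d(s_j, s_k) \leq \diam(T_{s_j}^n) + \diam(T_{s_k}^n) \leq \tfrac{d_j}{8} + \tfrac{d_k}{8} \leq \tfrac{d(s_j,s_k)}{4},
\end{equation*}
a contradiction; so in fact the $T_{s_j}^n$ are pairwise disjoint.

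For item (2), fix any choice $v_{s_j}^n \in T_{s_j}^n$. The last clause of (\ref{plan-eta-bound}) gives $\eta_n(r_j)\leq r_j/n \leq r_j$, so $d(v_{s_j}^n,\partial D) \leq r_j + \diam(T_{s_j}^n) \leq 2r_j$. Applying (\ref{eta bound}) at $z=v_{s_j}^n$ to any triangle $T$ meeting $v_{s_j}^n$ yields $\diam(T) \leq \eta_n(2 r_j)$. Since at most $K$ triangles meet at $v_{s_j}^n$ and a spherical set of diameter $\delta$ has area at most $C \delta^2$, we obtain
\begin{equation*}
\area(\mathcal{T}_{\{v_{s_j}^n\}}) \leq C K \eta_n(2 r_j)^2 \leq \frac{C K}{n^2}\, 2^{-j},
\end{equation*}
and summing over $j$ gives $\sum_j \area(\mathcal{T}_{\{v_{s_j}^n\}}) = O(1/n^2) \to 0$, as required.

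The only real subtlety is the simultaneous construction of $\eta_n$ meeting all three constraints in (\ref{plan-eta-bound}), which can look delicate because the sequences $(r_j)$ and $(d_j)$ may accumulate at $0$. However, since the constraints involve only countably many points on $[0,\infty)$, no genuine obstacle arises; the real work has already been done in Theorem \ref{theorem_B}.
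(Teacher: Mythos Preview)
Your proof is correct and follows essentially the same strategy as the paper's: choose a rate function $\eta_n$ that is small at the scales $d(s_j,\partial D)$ (the paper takes $\eta_n=\eta_1/n$ with $\eta_1$ built similarly, after ordering $S$ by decreasing distance to $\partial D$), apply Theorem~\ref{theorem_B}, and use the uniform degree bound to sum the areas. One small point worth making explicit: the interpolation producing $\eta_n$ succeeds not merely because the constraints are at countably many points, but because discreteness of $S$ in the compact $\Chat$ forces $\{j:r_j\geq x\}$ to be finite for every $x>0$, so the infimum of the bounds $b_j$ over $\{j:2r_j\geq x\}$ is strictly positive---without this, constraints of the form $\eta_n(2r_j)\leq 2^{-j/2}/n$ at a fixed positive $r_j$ for infinitely many $j$ would be incompatible.
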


\vspace{2mm}

%The proof of Theorem \ref{desired_triangulation} in the case $D=\Chat$ was already sketched in Section \ref{outline}, so we will assume $D\subsetneq\Chat$. Hence we may assume without loss of generality that $S$ is infinite. Indeed, if we can prove Theorem \ref{desired_triangulation} in that special case, then given any finite $S\subset D$ we may form a discrete $S'\supset S$ by adding to $S$ a sequence accumulating at a boundary point of $D$, and the special case of Theorem \ref{theorem_B} applied to $S'$ implies Theorem \ref{theorem_B} for the original $S$. 

\noindent \emph{Proof of Theorem \ref{desired_triangulation} assuming Theorem \ref{theorem_B}.}  Label the elements of $S$ as $\{s_k\}_{k=1}^{|S|}$ so that \begin{equation}\label{listing_s} d(s_1,\partial D)\geq d(s_2,\partial D)\geq d(s_3,\partial D)\geq... \end{equation} 

We will build a sequence of continuous, strictly increasing functions $(\eta_n)_{n=1}^\infty: [0,\infty) \rightarrow [0,\infty)$ satisfying $\eta_n(0)=0$ to which we will apply Theorem \ref{theorem_B}. We start with $\eta_1$. Let $c_k:=d(s_k, \partial D)$, where we note that $c_k\rightarrow0$ if $S$ is infinite. Since $S$ is discrete in $D$, we have that
\begin{equation} I_k:=\{l \in\mathbb{N}: c_l=c_k\} \end{equation}
is finite for every $k$. Hence we may define $\eta_1$ to be positive and strictly increasing in a small neighborhood of each $c_k$ so that 
\begin{equation}\label{1/2} \eta_1(c_k) <  \frac{ d(s, S\setminus\{s\})}{2} \textrm{ for all } s\in I_k, \textrm{ and } \end{equation}
\begin{equation}\label{1/2s} \eta_1(c_k+\eta_1(c_k)) < \frac{1}{2^k}. \end{equation} Finish the definition of $\eta_1$ by setting $\eta_1(0)=0$ and interpolating on the rest of $[0,\infty)$. We let \begin{equation}\label{eta_defn} \eta_n:=\eta_1/n. \end{equation}

% \textrm{ and } \eta_1(c_k+\eta_1(c_k)) < \frac{1}{2^k}.

Theorem \ref{theorem_B} applied to $(\eta_n)_{n=1}^\infty$ yields a sequence of equilateral triangulations $\{\mathcal{T}_n\}_{n=1}^\infty$ of $D$. We define the collection $\{T_s^n\}_{s\in S} \subset \mathcal{T}_n$ by setting $T_s^n$ to be any triangle in $\mathcal{T}_n$ containing $s$. By (\ref{eta bound}), (\ref{1/2}) and (\ref{eta_defn}), we have that if $s, s' \in S$ with $s\not=s'$, then $T_s^n$, $T_{s'}^n$ are non-adjacent for any $n$. Let $v_s^n$ be any choice of vertex in $T_s^n$ for each $s\in S$ and $n\in\mathbb{N}$. Since $v_s^n \in T_s^n$, we have by Theorem \ref{theorem_B} that \[ d(v_s^n, \partial D) <  d(v_s^n, s) + d(s, \partial D) \leq \eta_n(c_k) + c_k. \] Thus, again by Theorem \ref{theorem_B}, we have that if $T$ is a triangle with the vertex $v_s^n$, then \[ \diam(T) \leq \eta_n(c_k+\eta_n(c_k)). \] Recalling that the maximal degree of a vertex in any of the triangulations $\mathcal{T}_n$ is bounded by a universal constant (call it $d$) by Theorem \ref{theorem_B}, it follows from (\ref{1/2s}) and (\ref{eta_defn}) that: \begin{equation}  \sum_{s \in S}\textrm{area}\left( \mathcal{T}_{\{v_s^n\}} \right) \leq d\cdot\sum_{k=1}^{|S|} \left[\eta_n(c_k+\eta_n(c_k))\right]^2 \leq \frac{d}{n^2}\cdot\sum_{k=1}^{|S|} \left[\eta_1(c_k+\eta_1(c_k))\right]^2 \xrightarrow{n\rightarrow\infty}0.  \end{equation} Thus Property (2) in the conclusion of the Theorem is proven. Property (1) holds by definition of $T_s^n$, and Property (3) follows from Property (1), Theorem \ref{theorem_B}, and the observation that \[ \sup_{k\in\mathbb{N}} \eta_n(c_k) \xrightarrow{n\rightarrow\infty} 0. \]

% For any vertex $v_s^n$ in $T_s^n$, we have 

%and so that define $\eta_1$ in a small neighborhood of $c_1$ so that \[ \eta_1(c_1+\eta_1(c_1)) < 1/2. \] 

\qed
% Theorem \ref{main_thm} . We will then deduce Theorem \ref{desired_triangulation} from a more general result proven in Sections \ref{}-\ref{}.

%postpone the proof of Theorem \ref{desired_triangulation} to Section \ref{}, where it will be deduced from a more general result on equilateral triangulations proven in Sections \ref{}-\ref{}.

% We leave the proof of Theorem \ref{desired_triangulation} 

\section{A Base Family of Mappings}
\label{abasefamily}

Having proven Theorem \ref{desired_triangulation}, we now have the holomorphic function $g: D\rightarrow\Chat$ described in the Introduction (see Definition \ref{g_n_defn} below). In this Section, we introduce a family of quasiregular perturbations of $g$ by moving critical values of $g$ using the results of Section \ref{moving}. The application we have in mind is roughly to prove Theorem \ref{main_thm} by finding a fixpoint in this family, and so we will need to establish certain technical estimates about this family which roughly correspond to verifying the hypotheses of an appropriate fixpoint theorem.

%, and in Section \ref{finding} we will show a particular function in this family proves Theorem \ref{main_thm}.

% The proof of Theorem \ref{main_thm} will proceed by finding a fixpoint among a family of quasiconformal perturbations of $g$

% to which we will apply quasiconformal perturbations in order to prove Theorem \ref{main_thm}.

\begin{rem} Throughout Section \ref{abasefamily} we will fix a domain $D\subset\Chat$, a discrete set $S\subset D$, and equilateral triangulations $\mathcal{T}_n$ of $D$ as given in Theorem \ref{desired_triangulation}. 
\end{rem}

%\begin{definition} We will define a sequence $\mathcal{T}_n$ of triangulations of $D$ as follows. We let $\mathcal{T}_0$ consists of two triangles: $\mathbb{H}(-1,1,\infty)$ and $-\mathbb{H}(-1,1,\infty)$. For $n>0$, the collection $\mathcal{T}_n$ consists of the triangles obtained by barycentric subdivision of each triangle in $\mathcal{T}_{n-1}$. 

\begin{rem}\label{subdivision_remark} A triangulation is called $3$-colourable if its vertices may be coloured with three distinct colours in such a way that adjacent vertices have different colours. Any triangulation can be subdivided into a $3$-colourable triangulation by barycentric subdivision (see Figure \ref{fig:barycentric_subdivision}). Since barycentric subdivision preserves the properties of Theorem \ref{desired_triangulation}, we may assume that the triangulations $\mathcal{T}_n$ are $3$-colourable, and and each vertex has an even degree. This allows us to define the following (see also Remark 2.8 of \cite{2021arXiv210316702B}):
% Perhaps after barycentric subdivision (see Remark 2.8 of \cite{2021arXiv210316702B}), we may assume $\mathcal{T}_n$ is $3$-colourable. 
\end{rem}

\begin{figure}[ht!]
{\includegraphics[width=.66\textwidth]{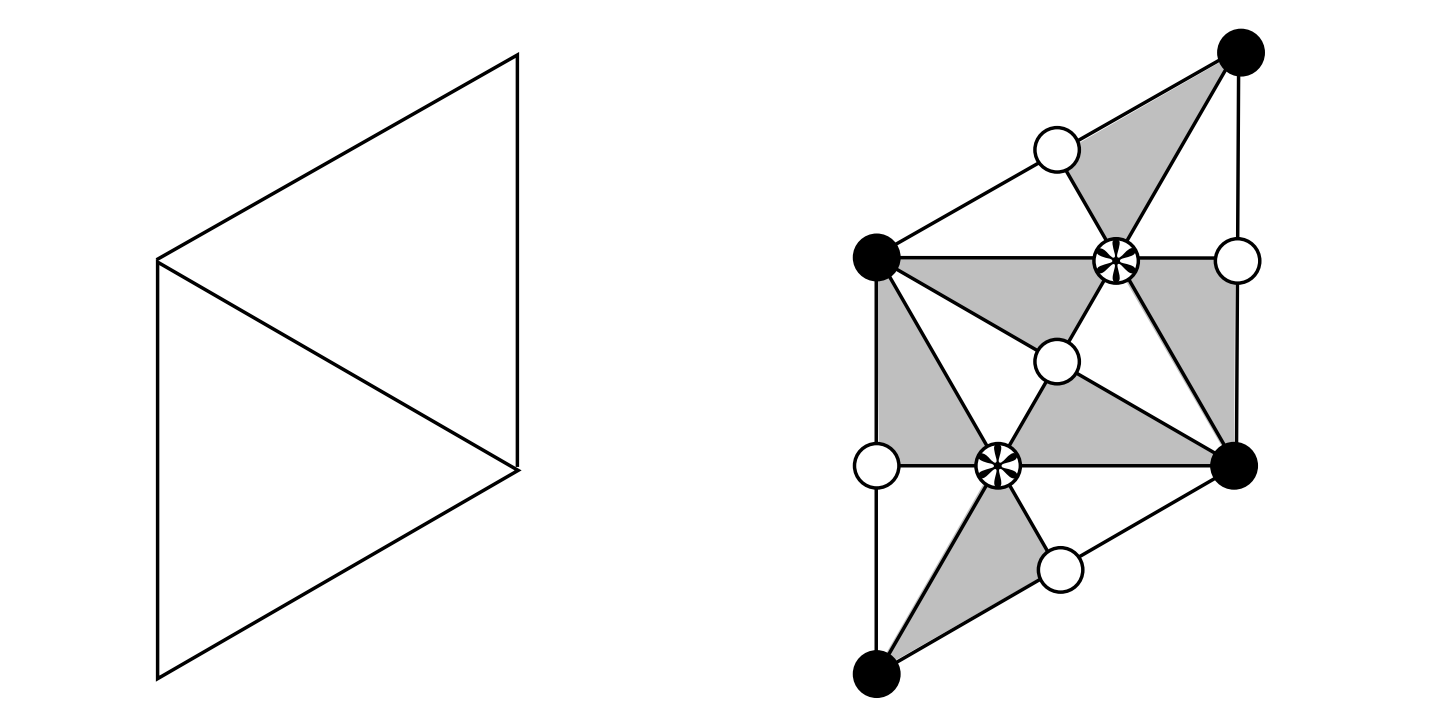}}
\caption{Illustrated is the process of barycentric subdivision. This figure is borrowed from \cite{2021arXiv210316702B}.}
\label{fig:barycentric_subdivision}
\end{figure}

\begin{definition}\label{g_n_defn} We will define a sequence of holomorphic maps $g_n: D\rightarrow\Chat$ as follows. For any $n$, fix a triangle $T\in\mathcal{T}_n$, and let $g_n: T \rightarrow \mathbb{H}(-1,1,\infty)$ be a conformal map such that the vertices of $T$ map to $\pm1,\infty$.  The definition of $g_n$ on $D$ is then obtained by application of the Schwarz reflection principle. 
\end{definition}

\begin{prop}\label{crit_points_of_g} The critical points of $g_n$ are precisely the vertices of the triangles in $\mathcal{T}_n$. The only critical values of $g_n$ are $\pm1,\infty$.
\end{prop}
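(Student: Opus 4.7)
The plan is to analyze $g_n$ locally in the three regions where its behavior differs: interiors of triangles, interiors of edges, and vertices.

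First, I would note that $g_n$ is well-defined on $D$ precisely because $\mathcal{T}_n$ is $3$-colorable (by Remark \ref{subdivision_remark}): upon reflecting across an edge of $T$ via the equilateral structure and composing with the anti-M\"obius reflection $z\mapsto\bar z$ across $\widehat{\mathbb{R}}$ in the target, the opposite vertex of the reflected triangle is sent to the same element of $\{-1,1,\infty\}$ as the opposite vertex of $T$ (since $z\mapsto\bar z$ fixes $\pm 1$ and $\infty$). Iterating, the image of a vertex depends only on its color class, and consistency holds around every closed loop in the $1$-skeleton. Note also that with each reflection the target alternates between $\mathbb{H}$ and $\Chat\setminus\overline{\mathbb{H}}$, so the $d$ triangles around a vertex of degree $d$ map alternately to these two hemispheres.

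Next, the absence of critical points away from vertices is immediate. On the interior of any triangle $T\in\mathcal{T}_n$, $g_n|_T$ is conformal by construction, hence has nonvanishing derivative. On the interior of an edge $e$ shared by triangles $T, T'$, the Schwarz reflection principle produces a holomorphic extension of $g_n|_T$ across $e$ that agrees with $g_n|_{T'}$, and this extension is conformal at each interior point of $e$ because $g_n|_T$ is.

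The main step is to show each vertex $v$ is a critical point. Let $d$ denote the degree of $v$; by Remark \ref{subdivision_remark} $d$ is even, and combinatorially $d\geq 3$, so $d\geq 4$. The triangles around $v$ map alternately to $\mathbb{H}$ and $\Chat\setminus\overline{\mathbb{H}}$, so exactly $d/2$ land in $\mathbb{H}$. For a sufficiently small neighborhood $B$ of $g_n(v)$ in $\Chat$, choose a point $p\in B\cap\mathbb{H}$ not lying on the image of any edge incident to $v$; each of the $d/2$ triangles mapping into $\mathbb{H}$ contributes exactly one preimage of $p$ close to $v$ (since $g_n$ restricted to each such triangle is injective with image containing $B\cap\mathbb{H}$), while the remaining $d/2$ triangles contribute none. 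Hence the local degree of $g_n$ at $v$ equals $d/2\geq 2$, so $g_n'(v)=0$ and $v$ is a critical point.

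Finally, since the critical points of $g_n$ are precisely the vertices of $\mathcal{T}_n$, and each vertex is mapped by construction to an element of $\{-1,1,\infty\}$, the critical values are contained in $\{-1,1,\infty\}$. I expect the main obstacle to be the local degree count at vertices, which crucially uses both the $3$-coloring (so that $g_n(v)$ is consistently defined as one of $\pm 1,\infty$) and the even-degree property (so the alternating pattern of upper and lower half-plane images closes up correctly after going once around $v$, and the $d/2$ count is an integer).
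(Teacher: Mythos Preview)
Your proof is correct and follows essentially the same approach as the paper: local univalence on triangle interiors and edge interiors, together with a local degree count of $d/2$ at each vertex. You are simply more explicit than the paper about well-definedness via the $3$-coloring and about why $d/2\geq 2$ (using evenness and the fact that every vertex in a surface triangulation has degree at least $3$), points the paper leaves implicit.
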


\begin{proof} The maps $g_n$ are locally univalent except at the vertices of triangles in $\mathcal{T}_n$. At a vertex $v$ in $\mathcal{T}_n$, the map $g_n$ is locally $m:1$ where $m$ is such that $2m$ edges of the triangulation $\mathcal{T}_n$ meet at $v$. The last statement follows since each vertex is sent to one of $\pm1$, $\infty$ by $g_n$.
\end{proof}

%\begin{rem} Each vertex $v$ in the triangulation $\mathcal{T}_n$ comes with a marking indicating which critical value $\{\pm1,\infty\}$ the vertex $v$ is mapped to by $g_n$. \end{rem}

\begin{prop}\label{quasiregular_map} Let $n>0$, let $\mathcal{V}$ be a subset of pairwise non-adjacent vertices in $\mathcal{T}_n$, and suppose we have a mapping $\tilde{h}: \mathcal{V}\rightarrow\Chat$. If $d(\tilde{h}(v),I_{g_n(v)})\geq\pi/12$ for each $v\in\mathcal{V}$, then for $k_0$ as in Proposition \ref{dilatation_constant}, there exists a quasiregular mapping $\tilde{g}_n: D\rightarrow\Chat$ such that: 
\begin{enumerate} \item $\tilde{g}_n(v)=\tilde{h}(v)$ for all $v\in\mathcal{V}$,
 \item $\tilde{g}_n\equiv g_n$ on $(\cup\mathcal{T}_n)\setminus \mathcal{T}_\mathcal{V}$ and hence $\mu(\tilde{g}_n)$ is supported on $\mathcal{T}_\mathcal{V}$, and  %any triangle with vertex set disjoint from $\mathcal{V}$,
  \item $||\mu(\tilde{g}_n)||_{L_\infty(D)}<k_0$. %is bounded independently of $n$, $\mathcal{V}$, $\mathcal{V}'$ and $h$.   
  \end{enumerate}
\end{prop}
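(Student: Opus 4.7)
The plan is to define $\tilde{g}_n$ by post-composing $g_n$ on each star around a vertex of $\mathcal{V}$ with the quasiconformal perturbation map from Definition \ref{pertubation_maps}. Concretely, for each $v \in \mathcal{V}$ write $w_v := g_n(v) \in \{\pm 1,\infty\}$; by hypothesis $d(\tilde{h}(v), I_{w_v}) \geq \pi/12$, so $\phi^{\tilde{h}(v)}_{w_v}$ is defined. I would then set
\[
\tilde{g}_n(z) := \begin{cases} \phi^{\tilde{h}(v)}_{w_v}\bigl(g_n(z)\bigr) & \text{if } z \in \mathcal{T}_{\{v\}} \text{ for some } v \in \mathcal{V}, \\ g_n(z) & \text{if } z \in D \setminus \mathcal{T}_\mathcal{V}. \end{cases}
\]

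First I would verify well-definedness and continuity. The stars $\mathcal{T}_{\{v\}}$ for $v \in \mathcal{V}$ are pairwise interior-disjoint: if two such stars shared a triangle, that triangle would contain two vertices of $\mathcal{V}$ and thereby provide an edge between them, contradicting non-adjacency (Definition \ref{requestedadjacentdefn}). The geometric fact that makes the two definitions of $\tilde{g}_n$ agree on $\partial\mathcal{T}_{\{v\}}$ is the inclusion $g_n(\partial \mathcal{T}_{\{v\}}) \subset I_{w_v}$. Indeed, on each triangle $T$ of the star, $g_n$ is the conformal map sending the vertices of $T$ to $\{\pm 1,\infty\}$ with $v \mapsto w_v$; the edge of $T$ opposite to $v$ therefore maps conformally onto the arc of $\hat{\reals}$ between the images of the other two vertices, which by definition is exactly $I_{w_v}$. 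Since $\phi^{\tilde{h}(v)}_{w_v}$ is the identity on $I_{w_v}$ by Definition \ref{pertubation_maps} (2), the two branches agree on the common boundary, so $\tilde{g}_n$ is continuous on $D$.

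Finally I would check the three stated properties. Property (1) is immediate from Definition \ref{pertubation_maps} (1), which gives $\tilde{g}_n(v) = \phi^{\tilde{h}(v)}_{w_v}(w_v) = \tilde{h}(v)$. Property (2) is built into the definition. For property (3), on the interior of any triangle in $\mathcal{T}_\mathcal{V}$ the map $g_n$ is conformal, so the composition formula for Beltrami coefficients yields
\[
|\mu(\tilde{g}_n)(z)| = \bigl|\mu(\phi^{\tilde{h}(v)}_{w_v})(g_n(z))\bigr| \quad \text{a.e.},
\]
while off $\mathcal{T}_\mathcal{V}$ the map $\tilde{g}_n = g_n$ is holomorphic. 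The hypothesis $d(\tilde{h}(v), I_{w_v}) \geq \pi/12$ is exactly the condition under which the proof of Proposition \ref{dilatation_constant} produces the uniform bound $\|\mu(\phi^{\tilde{h}(v)}_{w_v})\|_{L^\infty(\Chat)} < k_0$, and this bound is independent of $v$; therefore $\|\mu(\tilde{g}_n)\|_{L^\infty(D)} < k_0$. Together with continuity this gives quasiregularity.

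The one non-routine point — and the only real obstacle — is the boundary identity $g_n(\partial \mathcal{T}_{\{v\}}) \subset I_{w_v}$. This is where the 3-colourability from Remark \ref{subdivision_remark} plays its role, ensuring that $g_n(v)$ is a well-defined element of $\{\pm 1,\infty\}$ and that the three edge-arcs of every triangle are matched consistently with the perturbation maps $\phi^{\tilde{h}(v)}_{w_v}$; everything else in the argument is a routine application of Definition \ref{pertubation_maps} and Proposition \ref{dilatation_constant}.
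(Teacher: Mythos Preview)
Your proof is correct and follows essentially the same construction as the paper: post-compose $g_n$ with $\phi^{\tilde{h}(v)}_{g_n(v)}$ on each star $\mathcal{T}_{\{v\}}$, check the pieces match along $\partial\mathcal{T}_{\{v\}}$ via the inclusion $g_n(\partial\mathcal{T}_{\{v\}}) \subset I_{g_n(v)}$, and bound the dilatation via Proposition~\ref{dilatation_constant}. The only point the paper makes explicit that you leave implicit is the appeal to removability of analytic arcs for quasiregular mappings (Theorem~I.8.3 of \cite{MR0344463}) to pass from continuity across the triangle edges to global quasiregularity.
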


\begin{proof} We will abbreviate $g=g_n$, and assume as in the statement of the Proposition that $d(\tilde{h}(v),I_{g(v)})\geq\pi/12$ for each $v\in\mathcal{V}$. Thus, the quasiconformal map $\phi^{\tilde{h}(v)}_{g(v)}$ of Definition \ref{pertubation_maps} satisfies: \begin{equation}\label{first_prop_phi} \phi^{\tilde{h}(v)}_{g(v)}(g(v))=\tilde{h}(v) \textrm{ (by (1) of Definition \ref{pertubation_maps}), }\end{equation} \noindent and  \begin{equation}\label{second_prop_phi}  \left|\left|\mu\left(\phi^{\tilde{h}(v)}_{g(v)}\right)\right|\right|_{L^\infty(\Chat)}<k_0 \textrm{ (by Proposition \ref{dilatation_constant}) }\end{equation} for all $v\in\mathcal{V}$. For any $v\in\mathcal{V}$, we define \begin{equation}\label{defn_pert} \tilde{g}_n:=\phi^{\tilde{h}(v)}_{g(v)}\circ g \textrm{ in } \mathcal{T}_{\{v\}},\end{equation} and \begin{equation}\label{defn_no_pert} \tilde{g}_n:=g  \textrm{ in } (\cup\mathcal{T}_n)\setminus \mathcal{T}_\mathcal{V}. \end{equation} Note that (\ref{defn_pert}) is well-defined since we have assumed no two vertices in $\mathcal{V}$ are adjacent. Moreover, since the boundary of $\mathcal{T}_{\{v\}}$ is mapped to $I_{g(v)}$, (2) of Definition \ref{pertubation_maps} implies that the Definitions (\ref{defn_pert}) and (\ref{defn_no_pert}) coincide along $\partial \mathcal{T}_\mathcal{V}$. Thus, by removability of analytic arcs for quasiregular mappings (see for instance Theorem I.8.3 of \cite{MR0344463}),  (\ref{defn_pert}) and (\ref{defn_no_pert}) define a quasiregular mapping on $\Chat$. Properties (1)-(3) in the statement of the Proposition now follow from (\ref{first_prop_phi})-(\ref{defn_no_pert}).
\end{proof}

%\vspace{20mm}

\begin{rem}\label{permissibility_definition} Following the hypotheses of Proposition \ref{quasiregular_map}, we will call $n$, $\mathcal{V}$, $\tilde{h}$ \emph{permissible} if $d(\tilde{h}(v),I_{g_n(v)})\geq\pi/12$ for each $v\in\mathcal{V}$. We use the notation $\tilde{h}$ since this mapping will later be chosen to approximate the mapping $h$ of Theorem \ref{main_thm}. The mapping $\tilde{g}_n$ is completely determined by a choice of permissible $n$, $\mathcal{V}$, $\tilde{h}$, so that a more precise (but more cumbersome) notation for $\tilde{g}_n$ would be $\tilde{g}_{n, \mathcal{V}, \tilde{h}}$. Instead, we will usually omit all of these parameters and simply denote the mapping by $\tilde{g}$, with the dependence on $n$, $\mathcal{V}$, and $\tilde{h}$ understood.

\end{rem}

\begin{rem}\label{no_asymp_values_remark} We recall the definition of an \emph{asymptotic value}. A value $w\in\Chat$ is an asymptotic value of a holomorphic function $f: D \rightarrow \Chat$ if there exists a curve $\gamma:[0,\infty)\rightarrow D$ with $\gamma(t)\rightarrow\partial D$ as $t\rightarrow\infty$ such that $f\circ\gamma(t)\rightarrow w$ as $t\rightarrow\infty$. As mentioned in the Introduction, the function $f$ of Theorem \ref{main_thm} has no asymptotic values, and hence the postcritical set and postsingular set of $f$ coincide. This will follow from the following Proposition (see also the proof of Theorem \ref{mainthmmod1}): \end{rem} 

\begin{prop}\label{no_asymp_values} Let $n$, $\mathcal{V}$, $\tilde{h}$ be permissible. Then the only branched values of $\tilde{g}$ are $\{\pm1,\infty\}\cup \tilde{h}(\mathcal{V})$. Moreover, if $\gamma:[0,\infty)\rightarrow D$ is a curve with $\gamma(t)\rightarrow\partial D$ as $t\rightarrow\infty$, then $\tilde{g}\circ\gamma(t)$ does not converge as $t\rightarrow\infty$.
\end{prop}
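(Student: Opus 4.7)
The proposition has two statements to prove: first, an identification of the branched values of $\tilde g$, and second, the non-convergence of $\tilde g\circ\gamma(t)$ along any curve $\gamma(t)\to\partial D$. I plan to treat these separately, since they involve rather different techniques.

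\emph{Part 1 (branched values).} I would begin by noting that $\tilde g$ is obtained from $g_n$ by post-composing with the homeomorphism $\phi^{\tilde h(v)}_{g_n(v)}$ on each $\mathcal T_{\{v\}}$, $v\in\mathcal V$, and equals $g_n$ on $(\cup\mathcal T_n)\setminus\mathcal T_{\mathcal V}$. Post-composition with a homeomorphism does not create or destroy branch points, so the critical set of $\tilde g$ equals that of $g_n$: the vertex set of $\mathcal T_n$ (Proposition~\ref{crit_points_of_g}). The image of $v\in\mathcal V$ is $\phi^{\tilde h(v)}_{g_n(v)}(g_n(v))=\tilde h(v)$ by property~(1) of Definition~\ref{pertubation_maps}. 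For any other vertex $u$, either $u$ lies outside $\mathcal T_{\mathcal V}$ and $\tilde g(u)=g_n(u)\in\{\pm1,\infty\}$, or $u\in\mathcal T_{\{v\}}$ for some $v\in\mathcal V$; in the latter case $u$ is adjacent to $v$, so by 3-colorability (Remark~\ref{subdivision_remark}) $g_n(u)\neq g_n(v)$, i.e.\ $g_n(u)\in I_{g_n(v)}$, which is fixed pointwise by $\phi^{\tilde h(v)}_{g_n(v)}$ by property~(2), giving again $\tilde g(u)=g_n(u)\in\{\pm1,\infty\}$. Hence the branched values of $\tilde g$ are precisely $\{\pm1,\infty\}\cup\tilde h(\mathcal V)$.

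\emph{Part 2 (no convergence).} The plan is to prove the following compactness claim: for every $w\in\widehat{\mathbb C}$ there exists $\epsilon>0$ such that each connected component of the open set $\tilde g^{-1}(B(w,\epsilon))$ is relatively compact in $D$. From this, the conclusion is immediate: if $\tilde g\circ\gamma(t)\to w$, then for $t$ large $\gamma(t)\in\tilde g^{-1}(B(w,\epsilon))$, and since $\gamma$ is connected it eventually lies in a single component whose closure is compact in $D$, contradicting $\gamma(t)\to\partial D$. To verify the compactness claim, I would exploit the structure that $\tilde g|_T:T\to R_T$ is a homeomorphism onto a ``(quasi-)hemisphere'' for each $T\in\mathcal T_n$, and that the image of every edge lies in the $1$-skeleton $\mathcal J:=\widehat{\mathbb R}\cup\bigcup_{v\in\mathcal V}\phi^{\tilde h(v)}_{g_n(v)}(\widehat{\mathbb R})$. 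A component $U$ of $\tilde g^{-1}(B(w,\epsilon))$ is assembled from pieces in individual triangles, joined across a shared edge $e$ only if $\tilde g(e)\cap B(w,\epsilon)\neq\emptyset$. For $\epsilon$ small, three cases arise: if $w\notin\overline{\mathcal J}$ then $U$ lies in one triangle; if $w$ is in the interior of a single arc of $\mathcal J$ then $U$ is a ``lens'' spanning two adjacent triangles; and if $w$ is at a vertex of $\mathcal J$, i.e.\ $w\in\{\pm1,\infty\}\cup\tilde h(\mathcal V)$, then $U$ is a (possibly branched) topological disk confined to the ``rose'' $\mathcal T_{\{v^*\}}$ of a preimage vertex $v^*$. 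The bounded-degree conclusion of Theorem~\ref{theorem_B} ensures this rose is a finite union of triangles, hence relatively compact in $D$. Distinct preimage vertices $v^*$ of a common $w$ lie in a common color class under the $3$-coloring and are therefore non-adjacent, so their roses are disjoint and give rise to distinct components.

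\emph{Main obstacle.} The subtle case is when $\tilde h(\mathcal V)$ has accumulation points in $\widehat{\mathbb C}$, so $\mathcal J$ is not closed and $B(w,\epsilon)\cap\mathcal J\neq\emptyset$ for every $\epsilon>0$. One must verify that, even at such accumulation values $w$, each individual component of $\tilde g^{-1}(B(w,\epsilon))$ remains confined to a single rose (or lens or single triangle), although the total number of components may be unbounded. The pairwise non-adjacency of $\mathcal V$ (equivalently, disjointness of roses around same-colored vertices) is precisely what prevents any individual component from ``spreading'' to $\partial D$, and is the key tool that makes the local-inverse compactness argument go through uniformly in $w$.
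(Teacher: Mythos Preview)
Your Part~1 is correct and essentially the paper's argument, with more detail.

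Your Part~2 takes a genuinely different route. Rather than prove your compactness claim for every $w\in\Chat$, the paper narrows $w$ down in two steps: since each rose is compact and $\gamma\to\partial D$, the curve $\gamma$ must cross infinitely many edges $e$ with $\tilde g(e)\subset\hat{\mathbb R}$ (all edges outside $\mathcal T_{\mathcal V}$ and all boundary edges of roses have this property), forcing $w\in\hat{\mathbb R}$; the same reasoning applied to $\tilde g^{-1}(\Gamma)$ for an auxiliary Jordan curve $\Gamma$ through $\pm1,\infty$ with $\Gamma\cap\hat{\mathbb R}=\{\pm1,\infty\}$ then forces $w\in\{\pm1,\infty\}$. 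Only these three values are then handled by a disconnectedness argument. This two-step reduction sidesteps the accumulation issue you flag entirely.

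Your direct approach is viable, but the tool you name in the ``main obstacle'' paragraph is not the right one. Pairwise non-adjacency of $\mathcal V$ only guarantees that distinct $\mathcal V$-roses share no \emph{triangles}; they may still share a boundary edge, and a component of $\tilde g^{-1}(B(w,\epsilon))$ can cross that edge. What actually pins components down is the permissibility hypothesis $d(\tilde h(v),I_{g_n(v)})\geq\pi/12$ together with the compactness and $\zeta$-continuity of the family $\{\phi^\zeta_c: d(\zeta,I_c)\geq\pi/12\}$ (Definition~\ref{pertubation_maps}(4) and Proposition~\ref{dilatation_constant}). For instance, for $w$ in the interior of $I_\infty=[-1,1]$ this compactness gives a uniform positive lower bound on $d\bigl(w,\phi^{\zeta}_\infty([1,\infty]\cup[-\infty,-1])\bigr)$ over all admissible $\zeta$, so for small $\epsilon$ no interior edge of any $\infty$-rose is permeable; meanwhile boundary edges of $\pm1$-roses map into $\hat{\mathbb R}\setminus(-1,1)$ and are also impermeable, so components inside $\pm1$-roses are trapped there. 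Without this uniformity argument your single-triangle/lens/single-rose trichotomy is not justified at accumulation values, and non-adjacency alone does not supply it.
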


\begin{proof} By Proposition \ref{crit_points_of_g}, the only branched values of $g$ are $\pm1,\infty$, so it follows from (\ref{first_prop_phi}) and (\ref{defn_pert}) that the only branched values of $\tilde{g}$ are $\{\pm1,\infty\}\cup \tilde{h}(\mathcal{V})$. 

Let $\gamma:[0,\infty)\rightarrow D$ be a curve with $\gamma(t)\rightarrow\partial D$ as $t\rightarrow\infty$. Suppose by way of contradiction that there exists $w\in\Chat$ such that $\tilde{g}\circ\gamma(t)\rightarrow w$ as $t\rightarrow\infty$. By Definition \ref{equil_triang_defn} and (2) of Proposition \ref{quasiregular_map}, $\gamma([0,\infty))$ must cross infinitely many edges $e$ of the triangulation $\mathcal{T}_n$ such that $\tilde{g}(e)\subset\hat{\mathbb{R}}$. Thus we must have $w\in\hat{\mathbb{R}}$. On the other hand, consider any Jordan curve $\Gamma$ passing through $\pm1,\infty$ with $\Gamma\cap\hat{\mathbb{R}}=\{\pm1,\infty\}$. Then we similarly see $\gamma([0,\infty))$ must cross infinitely many edges of the triangulation $\tilde{g}^{-1}(\Gamma)$, and so $w\in\Gamma\cap\hat{\mathbb{R}}=\{\pm1,\infty\}$. But \begin{equation} \tilde{g}^{-1}\left( \bigcup_{w\in\{\pm1,\infty\}}B(w,\pi/12) \right) \end{equation} is a disconnected subset of $D$, and so there can not be $w\in\{\pm1,\infty\}$ such that $\tilde{g}(\gamma(t))\in B(w,\pi/12)$ for all sufficiently large $t$.

%(\ref{defn_pert}) and (\ref{defn_no_pert}) that the only branched values of $\tilde{g}$ are $\{\pm1,\infty\}\cup \tilde{h}(\mathcal{V})$. 
\end{proof}

\begin{thm}\label{vertex_choice} Let $h: S\rightarrow S$ and $\varepsilon>0$. Then for all sufficiently large $n$, there exists a set of pairwise non-adjacent vertices $\mathcal{V}_n\subset \mathcal{T}_n$ such that:
\begin{enumerate} \item There exists an $\varepsilon$-bijection $\psi_n: S\rightarrow\mathcal{V}_n$,
\item $\emph{area}(\bigcup_{s\in S}\mathcal{T}_{\{\psi_{n}(s)\}})\rightarrow0$ as $n\rightarrow\infty$, \item If $\tilde{h}: \mathcal{V}_n\rightarrow\Chat$ is such that $\sup_{v\in \mathcal{V}_n}d(\tilde{h}(v), h\circ\psi_n^{-1}(v))\leq\pi/12$, then $n$, $\mathcal{V}_n$, $\tilde{h}$ are permissible. \end{enumerate} \end{thm}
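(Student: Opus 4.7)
The plan is to build $\mathcal{V}_n$ by choosing, for each $s \in S$, one carefully-selected vertex $v_s^n$ of the triangle $T_s^n$ supplied by Theorem \ref{desired_triangulation}, and to set $\psi_n(s) := v_s^n$, $\mathcal{V}_n := \psi_n(S)$. The selection rule will be designed so that Property (3) becomes automatic.

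Concretely, by Remark \ref{subdivision_remark} I may assume $\mathcal{T}_n$ is $3$-colourable; this forces $g_n$ to map the three vertices of each $T_s^n$ bijectively onto $\{-1, 1, \infty\}$. By Remark \ref{rem_perturbation_covering}, for every $s \in S$ there exists $w_s \in \{\pm 1, \infty\}$ with $d(h(s), I_{w_s}) \geq \pi/6$. I let $v_s^n$ be the unique vertex of $T_s^n$ satisfying $g_n(v_s^n) = w_s$. With this selection, if $\tilde{h}$ satisfies $d(\tilde h(v_s^n), h \circ \psi_n^{-1}(v_s^n)) = d(\tilde h(v_s^n), h(s)) \leq \pi/12$, then the triangle inequality gives
\[
d(\tilde h(v_s^n), I_{g_n(v_s^n)}) \;\geq\; d(h(s), I_{w_s}) - d(\tilde h(v_s^n), h(s)) \;\geq\; \tfrac{\pi}{6} - \tfrac{\pi}{12} \;=\; \tfrac{\pi}{12},
\]
so $n, \mathcal V_n, \tilde h$ are permissible in the sense of Remark \ref{permissibility_definition}, which is Property (3).

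Property (1) follows from part (3) of Theorem \ref{desired_triangulation}: $d(s, \psi_n(s)) \leq \diam(T_s^n) \to 0$ uniformly in $s$, so $\psi_n$ is an $\varepsilon$-homeomorphism for $n$ large. The injectivity of $\psi_n$ uses that the triangles $T_s^n$ are pairwise disjoint once $n \geq 2$: each $T_s^n$ lies in a ball of radius $\eta_n(c_k) < d(s_k, S \setminus \{s_k\})/(2n)$ by (\ref{1/2}), so these balls are pairwise disjoint. Property (2) is exactly part (2) of Theorem \ref{desired_triangulation}.

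The main obstacle is verifying that the vertices of $\mathcal V_n$ are themselves pairwise non-adjacent, which is strictly stronger than the pairwise non-adjacency of the triangles $T_s^n$: two non-adjacent triangles can have vertices joined by an edge belonging to a third triangle of $\mathcal T_n$. I would derive this by showing that the star regions $\mathcal T_{\{v_s^n\}}$ are pairwise disjoint for $n$ large, since every edge incident to $v_s^n$ lies in $\mathcal T_{\{v_s^n\}}$. Bounding $\diam(\mathcal T_{\{v_s^n\}})$ via (\ref{1/2s}) and the scaling $\eta_n = \eta_1/n$, and $d(s, v_s^n)$ via (\ref{1/2}), the separation reduces to a uniform inequality of the form $d(s_k, s_l) > C_{k,l}/n$ for an explicit constant $C_{k,l}$. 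In the generic case this is handled by the $1/n$ slack built into $\eta_n = \eta_1/n$; to handle pathological clustering of $S$ near $\partial D$, one strengthens (\ref{1/2s}) during the construction of $\eta_1$ so that $\eta_1(c_k + \eta_1(c_k))$ is bounded not just by $1/2^k$ but also by a fixed fraction of $d(s_k, S \setminus \{s_k\})$, yielding the required separation uniformly in $k,l$.
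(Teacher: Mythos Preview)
Your approach matches the paper's exactly: select $\psi_n(s)$ to be the unique vertex of $T_s^n$ with $g_n$-value $w_s$, where $w_s \in \{\pm 1, \infty\}$ is chosen via (\ref{perturbation_covering}) so that $d(h(s), I_{w_s}) \geq \pi/6$, and deduce Properties (1)--(3) from Theorem \ref{desired_triangulation} together with the same triangle-inequality argument for Property (3).

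You are in fact more careful than the paper on one point. The paper's proof records only that $\psi_n$ is a bijection (citing the non-adjacency of the triangles $T_s^n$) and does not separately verify that the chosen \emph{vertices} are pairwise non-adjacent; as you correctly observe, the diameter bound on $T_s^n$ alone does not rule out an edge of a third triangle joining $v_s^n$ to $v_{s'}^n$. Your diagnosis that one really needs the stars $\mathcal{T}_{\{v_s^n\}}$ to be separated, and your proposed remedy of strengthening (\ref{1/2s}) to also require $\eta_1(c_k + \eta_1(c_k)) < c \cdot d(s_k, S\setminus\{s_k\})$ for some small fixed $c$, are both correct. This extra constraint is free in the construction of $\eta_1$, and with it one gets, for $n \geq 2$, the uniform estimate $d(v_s^n, v_{s'}^n) \geq d(s,s')(1-1/n) \geq d(s,s')/2$ while every triangle containing $v_s^n$ has diameter at most $\eta_1(c_k+\eta_1(c_k))/n < c\cdot d(s_k,S\setminus\{s_k\})/n$, which gives the required vertex non-adjacency uniformly over all pairs.
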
 %\item If $\tilde{h}: \mathcal{V}_n\rightarrow\Chat$ is such that $\sup_{v\in \mathcal{V}_n}d(\tilde{h}(v), h\circ\psi_n^{-1}(v))\leq\pi/12$, then $n$, $\mathcal{V}_n$, $\tilde{h}$ are permissible. \end{enumerate} \end{thm}

\begin{proof} %Let $S\subset D$ be discrete, $\tilde{h}: S\rightarrow S$, and $\varepsilon>0$. We will consider the following partition of $S$: \begin{align} S_{<\varepsilon}: = S\cap\{z\in \Chat: d(z,\partial D)<\varepsilon\} \\  S_{\geq\varepsilon}:= S\cap\{z\in \Chat: d(z,\partial D)\geq\varepsilon\}  \end{align} We will prove the conclusions of the Theorem for all $n$ sufficiently large so that \begin{itemize} \item  If $s\in S_{\geq\varepsilon}$, the ball $B(s,\varepsilon)$ contains a triangle $T_s$ in $\mathcal{T}_n$ such that $T_s$, $T_{s'}$ are non-adjacent for distinct $s$, $s'\in S$. \end{itemize} That ($\ast$) holds for all sufficiently large $n$ follows from Proposition \ref{refining_triangulation}, and we henceforth consider such $n$ without further mention.

Let $h: S\rightarrow S$ and $\varepsilon>0$. Recall the triangles $\{T_s^n\}_{s\in S}$ of Theorem \ref{desired_triangulation}. By Theorem \ref{desired_triangulation}, there exists $N$ such that we have $T_s^n \subset B(s,\varepsilon)$ for all $n\geq N$ and $s \in S$. We henceforth assume $n\geq N$, and prove the conclusions of Theorem \ref{vertex_choice} hold for such $n$.

We first define $\mathcal{V}_n$ and the bijection $\psi_n: S\rightarrow\mathcal{V}_n$. Let $s\in S$. We will define $\psi_n(s)$ to be one of the three vertices of the triangle $T_s^n$: in order to determine which vertex, we first consider $h(s)$. By (\ref{perturbation_covering}), there is $w\in\{\pm1,\infty\}$ such that \begin{equation}\label{h(s)_relation} d(h(s), I_w)\geq\pi/6.    \end{equation} We define $\psi_n(s)$ to be the vertex $v$ of $T_s^n$ satisfying $g_n(v)=w$. This defines $\psi_n$ and $\mathcal{V}_n:=\psi_n(S)$, where we note $\psi_n$ is a bijection onto $\mathcal{V}_n$ since $T_s^n$, $T_{s'}^n$ are non-adjacent for distinct $s$, $s'$. That $\psi_n$ is an $\varepsilon$-bijection follows since $T_s^n \subset B(s,\varepsilon)$. Moreover, property (2) in the conclusion of Theorem \ref{vertex_choice} now also follows from property (2) of Theorem \ref{desired_triangulation}. %since $T_s^n\subset B(s,\varepsilon)$ for all $s\in S$. 

We will now prove property (3). Let $s\in S$. Note that by our choice of $\psi_n(s)$ and the relation (\ref{h(s)_relation}) we have  that \[ d(h(s), I_{g_n\circ\psi_n(s)})\geq\pi/6.\] Thus, if $\zeta$ is such that $d(\zeta,h(s))\leq\pi/12$, we have \[d(\zeta, I_{g_n\circ\psi_n(s)})\geq\pi/12.\] Thus for any $\tilde{h}: \mathcal{V}_n\rightarrow\Chat$ such that \[\sup_{v\in \mathcal{V}_n}d(\tilde{h}(v), h\circ\psi_n^{-1}(v))\leq\pi/12,\] we have \[\inf_{v\in \mathcal{V}_n}d(\tilde{h}(v),I_{g_n(v)})\geq\pi/12.\] Thus as defined in Remark \ref{permissibility_definition}, we have that $n$, $\mathcal{V}_n$, $\tilde{h}$ are permissible.

\end{proof}

%\begin{prop}\label{vertex_choice} Let $S\subset D$ be a discrete sequence, $\tilde{h}: S\rightarrow S$, and $\varepsilon>0$. Then for all sufficiently large $n$, there exists a set of pairwise non-adjacent vertices $\mathcal{V}\subset \mathcal{T}_n$ such that:
%\begin{enumerate} \item There exists an $\varepsilon$-bijection $\psi_n: S\rightarrow\mathcal{V}$, and \item If $\mathcal{V}'\subset\Chat$ and $h: \mathcal{V}\rightarrow\mathcal{V}'$ are such that $\sup_{v\in \mathcal{V}}d(h(v), \tilde{h}(v))<\pi/12$, then $n$, $\mathcal{V}$, $\mathcal{V}'$, $h$ are permissible.
% \end{enumerate} 
%\end{prop}

\begin{rem}\label{V_notation} The vertex set $\mathcal{V}_n$ in the conclusion of Theorem \ref{vertex_choice} is determined by a choice of $n$, $h$, $\varepsilon$. When we wish to emphasize this dependence, we will use the notation $\mathcal{V}(n, h, \varepsilon)$. We also remark that we will sometimes simply write $\psi$ in place of $\psi_n$ when $n$ is understood from the context.
 %We recall for the reader's convenience that $\mathcal{T}_{\mathcal{V}(n,S, \tilde{h}, \varepsilon)}$ denotes those triangles in $\mathcal{T}_n$ with a vertex in $\mathcal{V}(n,S, \tilde{h}, \varepsilon)$.  
\end{rem}

\begin{rem} Recall that the mapping $\tilde{g}$ is determined by permissible $n$, $\mathcal{V}$, $\tilde{h}$. In particular, the parameters $n$, $\mathcal{V}$, $\tilde{h}$ also determine (by way of the Measurable Riemann Mapping Theorem) a unique quasiconformal mapping $\phi: \Chat\rightarrow\Chat$ such that \begin{enumerate} \item $\tilde{g}\circ\phi^{-1}: \phi(D)\rightarrow\Chat$ is holomorphic, \item $\phi$ fixes each of $\pm1$, $\infty$, and \item $\mu(\phi)=0$ on $\Chat\setminus D$. \end{enumerate} As for $\tilde{g}$, we will omit the dependence of $\phi$ on the parameters $n$, $\mathcal{V}$, $\tilde{h}$ in our notation.%We will always normalize $\phi$ so as to fix $0$, $1$, $\infty$. We emphasize that $\phi$ depends on $n$, $\mathcal{V}$, $\mathcal{V}'$, $h$. %The particular normalization will not play an essential role in what follows; it is only important that we are consistent with a choice of normalization.
\end{rem}

\begin{prop}\label{phi_close_to_id} Let $h: S\rightarrow S$, and $\varepsilon>0$. For all sufficiently large $n$, we have that if $\tilde{h}$ is such that $n$, $\mathcal{V}(n, h, \varepsilon)$, $\tilde{h}$ are permissible, then \begin{equation}\label{close_to_id} \sup_{z\in\Chat}d(\phi(z),z)<\varepsilon. \end{equation}

%for any $\tilde{h}$ such that $n$, $\mathcal{V}(n, h, \varepsilon)$, $\tilde{h}$ are permissible, the relation \begin{equation}\label{close_to_id} \sup_{z\in\Chat}d(\phi(z),z)<\varepsilon. \end{equation} holds.

\end{prop}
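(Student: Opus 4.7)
The plan is to argue by contradiction using a normal families argument combined with the continuous dependence of the solution of the Beltrami equation on its coefficient. Suppose the conclusion fails. Then there exist $n_k\to\infty$ and, for each $k$, a map $\tilde{h}_k:\mathcal{V}(n_k,h,\varepsilon)\to\Chat$ such that $(n_k,\mathcal{V}(n_k,h,\varepsilon),\tilde{h}_k)$ is permissible, together with a point $z_k\in\Chat$ satisfying $d(\phi_k(z_k),z_k)\geq\varepsilon$, where $\phi_k$ denotes the associated normalized quasiconformal map.

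First I would collect uniform data on the family $\{\phi_k\}$. Since $\mu(\phi_k)=\mu(\tilde g_{n_k})$ on $D$ and vanishes on $\Chat\setminus D$, Proposition \ref{quasiregular_map}(3) gives $\|\mu(\phi_k)\|_{L^\infty(\Chat)}<k_0$, so each $\phi_k$ is $K_0$-quasiconformal for $K_0:=(1+k_0)/(1-k_0)$. By Proposition \ref{quasiregular_map}(2), the Beltrami coefficient $\mu(\phi_k)$ is supported in $\mathcal{T}_{\mathcal{V}(n_k,h,\varepsilon)}$, and Theorem \ref{vertex_choice}(2) ensures the spherical area of this support tends to $0$ as $k\to\infty$.

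Next I would extract a limit. Being uniformly $K_0$-quasiconformal and fixing the three points $\pm 1,\infty$, the family $\{\phi_k\}$ is precompact in the topology of uniform convergence on $\Chat$, so after passing to a subsequence $\phi_k\to\phi_\infty$ uniformly with $\phi_\infty$ a $K_0$-quasiconformal self-map of $\Chat$ fixing $\pm 1,\infty$. The uniform $L^\infty$ bound on $\mu(\phi_k)$ combined with the shrinking support forces $\mu(\phi_k)\to 0$ in every $L^p(\Chat)$ for $p<\infty$. Continuous dependence of the normalized solution of the Beltrami equation on its coefficient (see e.g.\ Theorem V.5.1 of \cite{MR0344463}) then identifies $\mu(\phi_\infty)=0$ a.e., so $\phi_\infty$ is a M\"obius transformation fixing $\pm 1,\infty$, hence $\phi_\infty=\mathrm{id}$. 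This contradicts $d(\phi_k(z_k),z_k)\geq\varepsilon$ and proves (\ref{close_to_id}).

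The main obstacle is merely to quote the correct continuity statement for the Measurable Riemann Mapping Theorem; once the two uniform inputs (the dilatation bound from Proposition \ref{dilatation_constant} and the area bound from Theorem \ref{vertex_choice}(2)) are in hand, the argument is a standard normal-families compactness argument and requires no further ideas.
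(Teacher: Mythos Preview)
Your proof is correct and rests on exactly the same two inputs the paper uses: the uniform dilatation bound $\|\mu(\phi)\|_{L^\infty}<k_0$ from Proposition~\ref{quasiregular_map}(3) and the vanishing support area from Theorem~\ref{vertex_choice}(2). The only difference is that the paper directly cites the implication ``normalized $k_0$-quasiconformal with small-area dilatation support $\Rightarrow$ uniformly close to the identity'' as Lemma~2.1 of \cite{MR3232011}, whereas you supply a self-contained normal-families/continuous-dependence proof of that same implication (note your compactness extraction is in fact redundant once you invoke continuous dependence, which already gives $\phi_k\to\mathrm{id}$ uniformly).
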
 
%\begin{prop} For any $\varepsilon>0$, there exists $N>0$ such that if $n>N$ and $\mathcal{V}'$, $h$ satisfy that $n$, $\mathcal{V}_S$, $\mathcal{V}'$, $h$ are permissible, then: \begin{equation} \sup_{z\in\Chat}d(\phi(z),z)<\varepsilon. \end{equation}
% and $\phi: \Chat \rightarrow\Chat$ is any quasiconformal mapping with $\textrm{supp}(\phi_{\overline{z}})\subset$, then \[||\phi_n(z)-z||_{L_\infty(\Chat)}<\varepsilon. \]
%\end{prop}

\begin{proof} Let $h: S\rightarrow S$, and $\varepsilon>0$. Let $N$ be sufficiently large so that $\mathcal{V}(N, h, \varepsilon)$ is defined, let $n\geq N$, and let $\tilde{h}$ be such that $n$, $\mathcal{V}(n, h, \varepsilon)$, $\tilde{h}$ are permissible. Then \begin{equation} \textrm{supp}(\phi_{\overline{z}}) \subset \bigcup_{v\in\mathcal{V}(n, h, \varepsilon)} T_{\{v\}} = \bigcup_{s\in S} T_{\psi_n(s)} \end{equation}  Thus, by (2) of Theorem \ref{vertex_choice}, we have \begin{equation}  \textrm{area}(\textrm{supp}(\phi_{\overline{z}})) \xrightarrow{n\rightarrow\infty}0. \end{equation} Lastly, we recall that by (3) of Proposition \ref{quasiregular_map}, we have $||\mu(\phi)||_{L_\infty(\Chat)}<k_0<1$, in other words $\phi$ is $k_0$-quasiconformal with $k_0$ independent of $n$, $\mathcal{V}(n, h, \varepsilon)$, $\tilde{h}$. The result now follows from the fact that there exists $\delta>0$ such that if $\phi:\Chat\rightarrow\Chat$ is any normalized $k_0$-quasiconformal mapping with $\textrm{area}(\textrm{supp}(\phi_{\overline{z}}))<\delta$, then (\ref{close_to_id}) holds (see for instance Lemma 2.1 of \cite{MR3232011}).

\end{proof}

\section{Continuity of a Fixpoint Map}
\label{finding}

In Section \ref{proof_of_main_thm}, we will prove Theorem \ref{main_thm}. As already described in the Introduction, the main strategy is to describe the desired function in the conclusion of the theorem as the fixpoint of a particular mapping we call $\Upsilon$ (see Definition \ref{psi_definition_environment} and Figure \ref{fig:fixpoint_picture}). The estimates proven in Section \ref{abasefamily} will allow us to verify the appropriate continuity and contraction properties of $\Upsilon$ in order to apply a fixpoint theorem. Section \ref{finding} is dedicated to defining $\Upsilon$ and proving continuity. 

\begin{definition}\label{psi_definition_environment} Let $D$, $S$, $h$, $\varepsilon$ be as in Theorem \ref{main_thm} and let $n$ be sufficiently large so that $\mathcal{V}(n,h,\varepsilon/2)$ is defined (see Remark \ref{V_notation}). We will define a map  \begin{align}\label{fixpoint_map} \Upsilon: \prod_{t\in h(S)} \overline{B(t,\pi/12)} \rightarrow \prod_{t\in h(S)} \Chat  \end{align} as follows. Let \[ (\zeta_t)_{t\in h(S)} \in  \prod_{t\in h(S)} \overline{B(t,\pi/12)}. \] Define a mapping \[ \tilde{h}: \mathcal{V}(n,h,\varepsilon/2) \rightarrow \Chat \textrm{ by } \tilde{h}\circ\psi(s)=\zeta_{h(s)} \textrm{ for all } s \in S, \] where $\psi=\psi_n$ is the bijection of Theorem \ref{vertex_choice}. By (3) of Theorem \ref{vertex_choice}, the triple $n$, $\mathcal{V}(n,h,\varepsilon/2)$, $\tilde{h}$ is permissible, and hence determines the mappings $\tilde{g}$, $\phi$. We define: \begin{equation}\label{defn_of_psi} \Upsilon\left((\zeta_t)_{t\in h(S)}\right):=\left(\phi\circ\psi(t)\right)_{t\in h(S)}. \end{equation} 

\end{definition}

\begin{figure}[ht!]
{\includegraphics[width=1\textwidth]{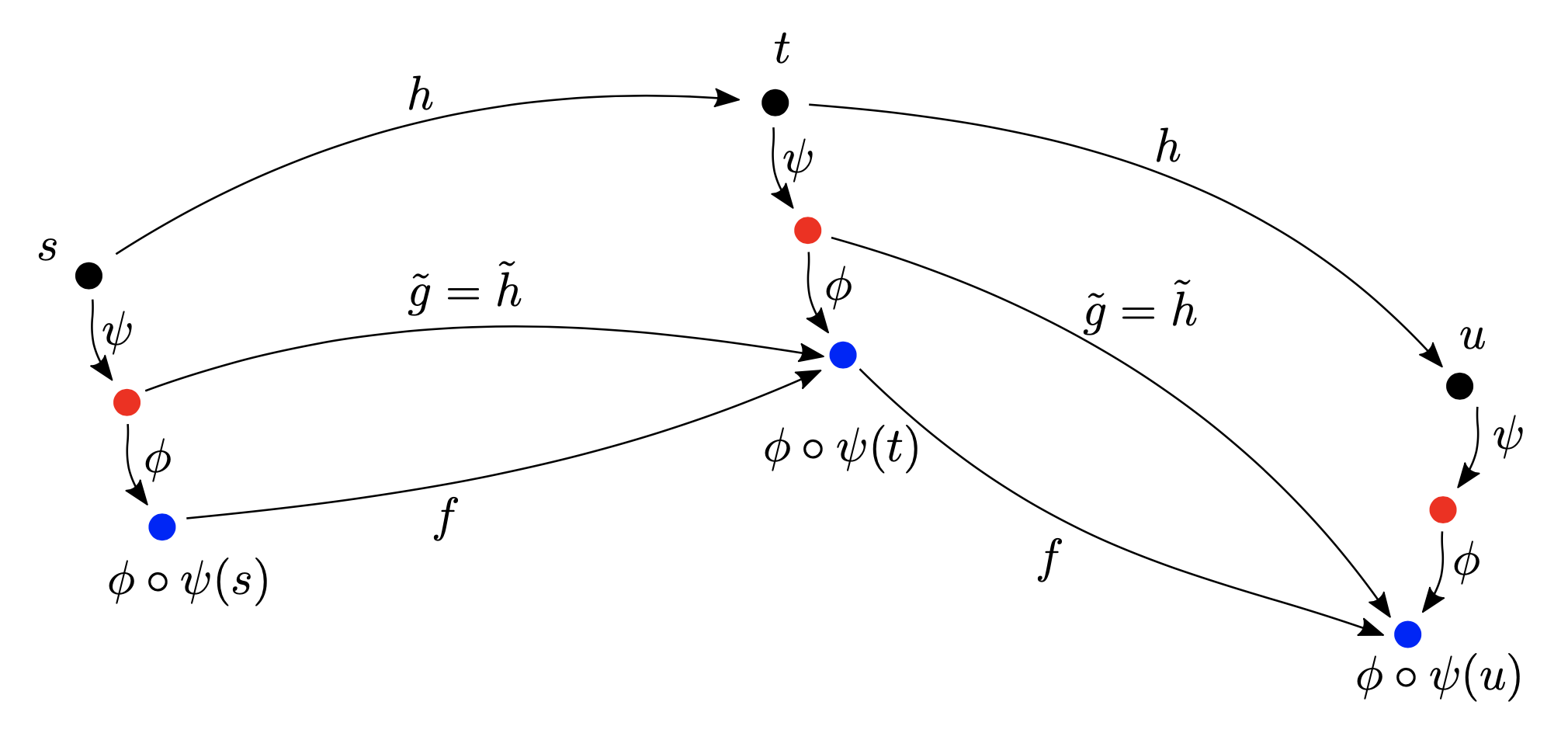}}
\caption{ Illustrated is the behavior of a fixpoint of the mapping $\Upsilon$. In black are points $s$, $t$, $u \in S$. In red are vertices of triangles containing $s$, $t$, $u$. In blue are the perturbations of these vertices under the correction mapping $\phi$. }
\label{fig:fixpoint_picture}      
\end{figure}

%\begin{figure}[ht!]
%{\includegraphics[width=1\textwidth]{}}
%\caption{This diagram illustrates the behavior for a fixpoint of $\Upsilon$.}
%\label{fig:fixpoint_picture}
%\end{figure}

%\begin{rem} The main idea of the proof of Theorem \ref{main_thm} is to justify the existence of a fixpoint for the mapping $\Upsilon$ in Definition \ref{psi_definition_environment}, since, as we will see, such a fixpoint corresponds to the desired functions $\phi$, $f:=\tilde{g}\circ\phi^{-1}$ in the conclusion of Theorem \ref{main_thm}: see Figure \ref{fig:fixpoint_picture}.

\begin{rem} We will always consider any product space $\prod_{i\in I}X_i$ to be endowed with the standard product topology. Recall that this topology is generated by subsets of the form  $\prod_{i\in I}U_i$ where each $U_i\subset X_i$ is open and $U_i=X_i$ except for finitely many $i$. With this topology, Tychonoff's Theorem says that any product of compact sets is compact. In particular, the domain of the mapping $\Upsilon$ is compact. 
\end{rem}

\begin{thm}\label{continuity_of_psi} The mapping $\Upsilon$ of Definition \ref{psi_definition_environment} is continuous.
\end{thm}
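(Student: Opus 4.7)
The plan is to verify continuity of each coordinate projection of $\Upsilon$ separately, which by the definition of the product topology suffices to conclude continuity of $\Upsilon$ itself. So I fix $t_0 \in h(S)$ and aim to show that $(\zeta_t)_{t\in h(S)} \mapsto \phi(\psi(t_0))$ is continuous. Because $S$ is discrete in a domain of $\widehat{\mathbb{C}}$, it is at most countable, and so the domain of $\Upsilon$ is a countable product of metrizable compacta, hence itself metrizable. Thus it suffices to check sequential continuity.

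So suppose $(\zeta_t^k)_{t\in h(S)} \to (\zeta_t^\infty)_{t\in h(S)}$ coordinatewise; write $\tilde h^k$, $\tilde g^k$, $\phi^k$ for the associated objects. From $\tilde h^k(\psi(s)) = \zeta_{h(s)}^k$ it follows that $\tilde h^k(\psi(s)) \to \tilde h^\infty(\psi(s))$ for every $s \in S$. The main intermediate goal is to show that, after extension by zero outside $D$, the Beltrami coefficients $\mu(\tilde g^k)$ converge to $\mu(\tilde g^\infty)$ almost everywhere on $\Chat$. On each support set $\mathcal{T}_{\{\psi(s)\}}$, formula (\ref{defn_pert}) combined with the chain rule expresses $\mu(\tilde g^k)$ in terms of $\mu(\phi^{\zeta_{h(s)}^k}_{g_n(\psi(s))})$ precomposed with the fixed holomorphic map $g_n$; property (4) of Definition \ref{pertubation_maps} (smooth dependence of $\mu(\phi^\zeta_w)$ on $\zeta$) then gives pointwise convergence on $\mathcal{T}_{\{\psi(s)\}}$. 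Outside $\bigcup_{s\in S} \mathcal{T}_{\{\psi(s)\}}$, both Beltrami coefficients vanish identically by (2) of Proposition \ref{quasiregular_map}, yielding the desired a.e.\ convergence on $\Chat$.

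Finally, I plan to invoke the classical continuous dependence result for the Beltrami equation. Since $\|\mu(\phi^k)\|_\infty = \|\mu(\tilde g^k)\|_\infty \leq k_0 < 1$ uniformly in $k$ by (3) of Proposition \ref{quasiregular_map}, and since each $\phi^k$ is normalized by the three fixed points $\pm 1, \infty$, the almost everywhere convergence of the Beltrami coefficients forces uniform convergence $\phi^k \to \phi^\infty$ on the compact space $\Chat$ in the spherical metric (see, e.g., Theorem V.5.1 of \cite{MR0344463}). In particular, $\phi^k(\psi(t_0)) \to \phi^\infty(\psi(t_0))$, which is precisely the required sequential continuity of the $t_0$-coordinate of $\Upsilon$.

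The only substantive step is this final appeal to continuous dependence of normalized solutions of the Beltrami equation on their coefficients under a.e.\ convergence with uniformly bounded dilatation; the remainder is a straightforward unwinding of Definition \ref{psi_definition_environment}, relying on the smooth-in-$\zeta$ hypothesis built into Definition \ref{pertubation_maps}(4) and on the fact that the perturbations are localized in the disjoint sets $\mathcal{T}_{\{\psi(s)\}}$.
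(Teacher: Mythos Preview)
Your argument is correct, and it takes a genuinely different route from the paper's. The paper does not exploit metrizability or sequential continuity; instead it works directly with the product topology and constructs, for a given base point $\zeta^0$ and a given $\varepsilon'>0$, an explicit basic open neighborhood $U$ on which $\sup_z d(\phi^\zeta(z),\phi_0(z))<\varepsilon'$. To do this it factors $\phi^\zeta=\phi_2^\zeta\circ\phi_1^\zeta$, where $\phi_1^\zeta$ integrates the dilatation restricted to a \emph{finite} union of support triangles $A=\bigcup_{i=1}^m T_{v_i}$ (chosen so that the remaining triangles have total area $<\delta'/C$), and $\phi_2^\zeta$ absorbs the rest. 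On $A$ the dilatation depends on only finitely many coordinates $\zeta_{t_1},\dots,\zeta_{t_m}$ and is close to that of $\phi_0$ in $L^\infty$ when those coordinates are close; off $A$ the dilatation of $\phi_2^\zeta$ is supported on a set of small area. Two separate ``close to identity'' criteria ($(\ast)$: small support area; $(\ast\ast)$: small $L^\infty$ dilatation) then combine via the triangle inequality. The paper even remarks after the proof that the map $(\zeta_t)\mapsto\mu(\tilde g)$ into $L^\infty(\Chat)$ is \emph{not} continuous, which is exactly the obstacle its decomposition is designed to circumvent.

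Your proof sidesteps this obstacle entirely by observing that $h(S)$ is countable (so the domain is metrizable and sequences suffice) and that pointwise a.e.\ convergence of the Beltrami coefficients, together with the uniform bound $\|\mu\|_\infty\le k_0<1$ and the three-point normalization, already forces uniform convergence of the normalized solutions. This is cleaner and shorter; the trade-off is that it leans on the sharper form of continuous dependence for the Beltrami equation (a.e.\ convergence of coefficients rather than $L^\infty$), whereas the paper's decomposition is more self-contained, using only the two elementary facts $(\ast)$ and $(\ast\ast)$ about normalized quasiconformal maps with small support or small dilatation.
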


\begin{proof} Fix \[ (\zeta_t^0)_{t\in h(S)}=\zeta^0 \in  \prod_{t\in h(S)} \overline{B(t,\pi/12)}\textrm{ and } (\xi_t^0)_{t\in h(S)}:=\Upsilon(\zeta^0).\] Let $V\subset \prod_{t\in h(S)} \Chat $ be an open set containing $\Upsilon(\zeta^0)$. Since $V$ is open, there is an $\varepsilon'>0$ such that \[\prod_{t\in h(S)}B(\xi^0_t, \varepsilon')\subset V. \] Thus, in order to prove the Theorem, it suffices to show that there exists $\delta>0$ and a finite subset $\{t_1, ..., t_m\} \in h(S)$ such that if we define \begin{align}\label{defn_of_U_t} U_{t}:=B(\zeta_{t}^0, \delta) \textrm{ for } t\in\{t_1, ..., t_m\},  \hspace{6mm}   \\ \nonumber U_t:=\overline{B(t,\pi/12)} \textrm{ for } t\in h(S)\setminus\{t_1, ..., t_m\}, \end{align} then $U:=\prod_{t\in h(S)}U_t$  satisfies: \begin{equation}\label{want_to_show_this} \Upsilon(U)\subset \prod_{t\in h(S)}B(\xi^0_t, \varepsilon'). \end{equation} In fact, we will show something stronger than (\ref{want_to_show_this}). For \[ \zeta \in \prod_{t\in h(s)}\overline{B(t,\pi/12)},\] let $\phi^\zeta:\Chat\rightarrow\Chat$ denote the quasiconformal mapping of Definition \ref{psi_definition_environment}, and let $\phi_0:=\phi^{\zeta_0}$. We will show that there exists $\delta>0$ so that: \begin{equation}\label{we_want_to_show_this_final} \sup_{z\in\Chat}d(\phi^\zeta(z),\phi_0(z))<\varepsilon' \textrm{ for all } \zeta\in U. \end{equation}  

%\begin{proof} Fix \[ \zeta_0:= (\zeta_t^0)_{t\in h(S)} \in  \prod_{t\in h(S)} \overline{B(t,\pi/12)}, \] and consider an open set $V\subset \prod_{t\in h(S)} \Chat $ which contains $\Upsilon(\zeta_0)$. Let $\xi_t$ be such that \[\Upsilon(\zeta_0)=(\xi_t)_{t\in h(S)}. \] Since $V$ is open, there is an $\varepsilon>0$ such that \[\prod_{t\in h(S)}B(\xi_t, \varepsilon)\subset V. \] Thus, in order to prove the Proposition, it suffices to show that there exists $\delta>0$ and a finite subset $\{t_1, ..., t_m\} \in h(S)$ such that if we define $U_{t}=B(\zeta_{t}^0, \delta)$ if $t\in\{t_1, ..., t_m\}$ and $U_t=\overline{B(t,\pi/12)}$ if $t\not\in\{t_1, ..., t_m\}$, then $U:=\prod_{t\in h(S)}U_t$  satisfies: \[ \Upsilon(U)\subset \prod_{t\in h(S)}B(\xi_t, \varepsilon). \]

Recall the constant $k_0<1$ of Proposition \ref{dilatation_constant}. We will use the following two facts:  \begin{itemize} \item There exists $\delta'>0$ such that if $\phi:\Chat\rightarrow\Chat$ is any normalized $k_0$-quasiconformal mapping with $\textrm{area}(\textrm{supp}(\phi_{\overline{z}}))<\delta'$, then \begin{equation}\label{phi_close_to_id2} \sup_{z\in\Chat}d(\phi(z),z)<\varepsilon'/2. \end{equation} \vspace{1mm} \item[$(\ast\ast)$] There exists $\delta''>0$ such that if $\phi:\Chat\rightarrow\Chat$ is any normalized $\delta''$-quasiconformal mapping, then (\ref{phi_close_to_id2}) holds. \end{itemize} We will abbreviate $\mathcal{V}:=\mathcal{V}(n, h, \varepsilon/2)$.  Note that: \begin{equation} \textrm{supp}(\phi^\zeta_{\overline{z}}) \subset \bigcup_{v\in\mathcal{V}}T_v \textrm{ for all } \zeta\in  \prod_{t\in h(s)}\overline{B(t,\pi/12)}.\end{equation} Since \begin{equation} \sum_{v\in \mathcal{V}}\textrm{area}(T_v) < \textrm{area}(\Chat) <\infty, \end{equation} there exist $v_1$, ..., $v_m \in \mathcal{V}$ such that \begin{equation}\label{area_distortion} \sum_{v\in \mathcal{V}\setminus\{v_1, ..., v_m\}}\textrm{area}(T_v) < \delta'/C, \end{equation} where $C>0$ is such that any normalized $k_0$-quasiconformal mapping $\phi$ satisfies $\textrm{area}(\phi(E))\leq C\cdot\textrm{area}(E)$ for all measurable $E\subset\Chat$.  In (\ref{defn_of_U_t}), we let \begin{equation}\label{set_definition} \{t_1, ..., t_m\}:=\{ h\circ\psi(v_1), ...,  h\circ\psi(v_m)\}. \end{equation}Denote $A:=\cup_{1\leq i \leq m}T_{v_i}$, and for $\zeta\in U$, let $\phi_1^\zeta:\Chat\rightarrow\Chat$ denote the normalized integrating map for $\mathbbm{1}_{A}\cdot\mu(\phi^\zeta)$. By (4) of Definition \ref{pertubation_maps} and  (\ref{defn_of_U_t}), there exists $\delta''>0$ so that \begin{equation} ||\mu( \phi_1^\zeta\circ\phi_0^{-1})||_{L^\infty(A)}<\delta'' \textrm{ for } \zeta\in U. \end{equation} Let $\phi_2^\zeta: \Chat\rightarrow\Chat$ be such that $\phi_2^\zeta$ is conformal in $\Chat\setminus\phi_1^\zeta(D)$, and $\phi_2^\zeta\circ\phi_1^\zeta$ is the normalized integrating map for $\mu(\phi^\zeta)$, so that we have $\phi_2^\zeta\circ\phi_1^\zeta=\phi^\zeta$. Then \begin{equation}\textrm{supp}(\mu(\phi_2^\zeta))\subset\phi_1^\zeta\bigg(\bigcup_{\mathcal{V}\setminus\{v_1,...,v_n\}}T_v\bigg), \end{equation} and so by (\ref{area_distortion}), we have: \begin{equation} \textrm{area}(\textrm{supp}(\mu(\phi_2^\zeta)))<C\cdot\sum_{v\in \mathcal{V}\setminus\{v_1, ..., v_m\}}\textrm{area}(T_v)  \leq \delta'.\end{equation} Thus by combining ($\ast$) and ($\ast\ast$) we have that for $\zeta\in U$: \begin{align} \nonumber \sup_{z\in\Chat}d(\phi^\zeta_2\circ\phi_1^\zeta(z),\phi_0(z)) =  \sup_{z\in\Chat}d(\phi_2^\zeta\circ\phi_1^\zeta\circ\phi_0^{-1}(z),z) \hspace{20mm}\\ \leq \sup_{z\in\Chat}d(\phi_2^\zeta\circ\phi_1^\zeta\circ\phi_0^{-1}(z),\phi_1^\zeta\circ\phi_0^{-1}(z))+\sup_{z\in\Chat}d(\phi_1^\zeta\circ\phi_0^{-1}(z),z) <  \varepsilon'/2+ \varepsilon'/2=\varepsilon'. \nonumber \end{align} This is the relation (\ref{we_want_to_show_this_final}) which we needed to show. 

\end{proof}

\begin{rem} A map very similar to $\Upsilon$ was considered in \cite{MR4023391} (see Lemma 14 there), however there the proof of continuity was considerably simpler than in the present context. The added difficulty in the present setting is due to the fact that the map \[ \prod_{t\in h(S)} \overline{B(t,\pi/12)} \mapsto L^\infty(\Chat) \] (given by considering the Beltrami coefficient of the quasiregular map generated by any element in the domain) is not continuous, whereas in \cite{MR4023391} the domain of this map is different: it consists of a product of discs with radii $\rightarrow0$ and hence there the map into $L^\infty(\Chat)$ is continuous.
\end{rem}

\noindent We conclude Section \ref{finding} by recording the statement of the classical Schauder-Tychonoff fixpoint theorem (see for instance Theorem 5.28 of \cite{MR1157815}) which we will apply in the proof of Theorem \ref{main_thm}:

\begin{thm}\label{schauder_tychonoff} Let $V$ be a locally convex topological vector space. For any non-empty compact convex set $X$ in $V$, any continuous function $f: X \rightarrow X$ has a fixpoint.
\end{thm}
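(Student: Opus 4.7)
The plan is to reduce to finite dimensions, where Brouwer's fixpoint theorem applies, via a family of Schauder projections indexed by the continuous seminorms of $V$. Concretely, let $p$ be a continuous seminorm on $V$ and $\varepsilon>0$, and write $B_p(x,\varepsilon):=\{y\in V:p(y-x)<\varepsilon\}$. Since $X$ is compact, the open cover $\{B_p(x,\varepsilon)\}_{x\in X}$ admits a finite subcover centered at $x_1,\ldots,x_n\in X$. Setting $\lambda_i(x):=\max(0,\varepsilon-p(x-x_i))$, the formula
\[ \pi_{p,\varepsilon}(x):=\frac{\sum_{i=1}^n\lambda_i(x)\,x_i}{\sum_{i=1}^n\lambda_i(x)} \]
is well-defined (the denominator is positive because each $x\in X$ lies in some chosen ball) and continuous from $X$ into $C_{p,\varepsilon}:=\mathrm{conv}\{x_1,\ldots,x_n\}\subset X$, and it satisfies $p\bigl(x-\pi_{p,\varepsilon}(x)\bigr)<\varepsilon$ by convexity of $p$.

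Next I would apply Brouwer's fixpoint theorem to the continuous self-map $\pi_{p,\varepsilon}\circ f:C_{p,\varepsilon}\to C_{p,\varepsilon}$, noting that $C_{p,\varepsilon}$ is a compact convex subset of the finite-dimensional affine span of $\{x_1,\ldots,x_n\}$ and hence homeomorphic to a closed Euclidean ball (or a point). This produces $y_{p,\varepsilon}\in C_{p,\varepsilon}$ with $\pi_{p,\varepsilon}\bigl(f(y_{p,\varepsilon})\bigr)=y_{p,\varepsilon}$, and therefore
\[ p\bigl(f(y_{p,\varepsilon})-y_{p,\varepsilon}\bigr)<\varepsilon. \]

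To pass to the limit, regard the collection of pairs $(p,\varepsilon)$ as a directed set under $(p,\varepsilon)\preceq(p',\varepsilon')$ iff $p\le p'$ pointwise and $\varepsilon\ge\varepsilon'$. The net $(y_{p,\varepsilon})$ lives in the compact set $X$ and hence has a cluster point $y\in X$. For an arbitrary continuous seminorm $q$ and $\delta>0$, continuity of $f$ at $y$ yields a continuous seminorm $q'\ge q$ and $\eta>0$ such that $q'(z-y)<\eta$ forces $q\bigl(f(z)-f(y)\bigr)<\delta$. Choosing $(p,\varepsilon)$ far enough along the directed set so that $p\ge q'$, $\varepsilon<\delta$, and $q'(y_{p,\varepsilon}-y)<\min(\eta,\delta)$, the triangle inequality gives
\[ q\bigl(f(y)-y\bigr)\le q\bigl(f(y)-f(y_{p,\varepsilon})\bigr)+q\bigl(f(y_{p,\varepsilon})-y_{p,\varepsilon}\bigr)+q\bigl(y_{p,\varepsilon}-y\bigr)<3\delta. \]
Hence $q\bigl(f(y)-y\bigr)=0$ for every continuous seminorm $q$, and since the continuous seminorms separate points on a Hausdorff locally convex space, $f(y)=y$.

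The principal obstacle is that $V$ need not be metrizable, so convergent subsequences are unavailable and one must argue genuinely with nets; moreover, the triangle-inequality step above has to be orchestrated simultaneously over all continuous seminorms, which is exactly why the directed set combines refinement in $p$ with decrease in $\varepsilon$. Everything else reduces to compactness of $X$, continuity of $f$, and the finite-dimensional Brouwer theorem applied to the Schauder projection construction.
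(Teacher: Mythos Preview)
Your argument is the standard Schauder-projection proof and is correct. The paper, however, does not attempt to prove this theorem at all: it merely records the statement as the classical Schauder--Tychonoff fixpoint theorem and cites Theorem~5.28 of Rudin's \emph{Functional Analysis} for a proof. So there is no ``paper's own proof'' to compare against; you have supplied a complete argument where the authors simply invoke a reference.

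One small remark: in the final step you use that the continuous seminorms separate points, which tacitly assumes $V$ is Hausdorff. This is the standard convention (and is certainly the one in Rudin), but it is worth being explicit since the theorem as stated in the paper does not include the word. The non-Hausdorff case can be recovered by passing to the quotient by the closure of $\{0\}$, but this is a cosmetic point.
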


\section{Finding a Fixpoint}\label{proof_of_main_thm}

We now turn to the proof of Theorem \ref{main_thm}. It will be convenient to first prove a slightly modified version of the Theorem (see Theorem \ref{mainthmmod1} below), where we assume $\pm1$, $\infty \in h(S)$ and consider the map $h|_{h(S)}$ rather than $h$. We will also first assume the following condition holds:

\begin{definition}\label{normalizably_triangulable} Let $D\subseteq\widehat{\mathbb{C}}$ be a domain, $S\subset D$ a discrete set, $h: S \rightarrow S$ a map, and $\varepsilon>0$. We say $D$, $S$, $h$, $\varepsilon$ are \emph{normalizably triangulable} if there exist arbitrarily large $n$ such that the vertex set $\mathcal{V}=\mathcal{V}(n,h,\varepsilon/2)$ of Theorem \ref{vertex_choice} satisfies \begin{enumerate} \item $\pm1$, $\infty \in \mathcal{V}$, and \item $\psi(s)=s$ for $s\in\{\pm1,\infty\}$. \end{enumerate}
\end{definition}

%We will first consider the case that $\mathcal{V}$ satisfies \begin{itemize} \item $\pm1$, $\infty \in \mathcal{V}$, and \item[($\ast\ast$)] $\psi(s)=s$ for $s\in\{\pm1,\infty\}$. \end{itemize}

\noindent As we will see, Theorem \ref{main_thm} will follow easily from the following Theorem:

\begin{thm}\label{mainthmmod1} Let $D\subseteq\widehat{\mathbb{C}}$ be a domain, $S\subset D$ a discrete set, $h: S \rightarrow S$ a map with $\pm1$, $\infty\in h(S)$, and $\varepsilon>0$. Assume $D$, $S$, $h$, $\varepsilon$ are normalizably triangulable. Then there exists an $\varepsilon$-homeomorphism $\phi:\Chat\rightarrow\Chat$ and a holomorphic map $f: \phi(D) \rightarrow \widehat{\mathbb{C}}$ with no asymptotic values such that $P(f)\subset \phi(D)$ and $f|_{P(f)}: {P(f)}\rightarrow P(f)$ is $\varepsilon$-conjugate to $h|_{h(S)}: h(S)\rightarrow h(S)$.
\end{thm}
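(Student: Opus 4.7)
The strategy is to realize the desired $f$ and $\phi$ as arising from a fixpoint of the map $\Upsilon$ of Definition \ref{psi_definition_environment} via the Schauder-Tychonoff theorem. I would first reduce to the case $\varepsilon \leq \pi/12$, since an $\varepsilon'$-conjugacy for $\varepsilon' \leq \varepsilon$ is in particular an $\varepsilon$-conjugacy. Using the normalizable triangulability of $D, S, h, \varepsilon$, fix $n$ large enough so that the vertex set $\mathcal{V} := \mathcal{V}(n, h, \varepsilon/2)$ contains $\pm 1$ and $\infty$ with $\psi := \psi_n$ fixing these points, and large enough that Proposition \ref{phi_close_to_id} (applied with parameter $\varepsilon/2$) yields $\sup_{z \in \Chat} d(\phi(z), z) < \varepsilon/2$ for every permissible $\tilde{h}$. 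Since $\psi$ is simultaneously an $\varepsilon/2$-bijection by Theorem \ref{vertex_choice}(1), this forces $d(\phi(\psi(t)), t) < \varepsilon \leq \pi/12$ for every $t \in h(S)$.

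I would then verify the hypotheses of Schauder-Tychonoff (Theorem \ref{schauder_tychonoff}) for the restriction of $\Upsilon$ to $X := \prod_{t \in h(S)} \overline{B(t, \pi/12)}$, which, viewed via local charts, is a compact convex subset of the locally convex topological vector space $\prod_{t \in h(S)} \complex$. Property (3) of Theorem \ref{vertex_choice} ensures that for every $(\zeta_t) \in X$ the induced $\tilde{h}$ is permissible (so $\Upsilon$ is well-defined on $X$), and the estimate of the previous paragraph shows $\Upsilon(X) \subseteq X$; continuity of $\Upsilon$ on $X$ is Theorem \ref{continuity_of_psi}. Schauder-Tychonoff then produces a fixpoint $(\zeta_t)_{t \in h(S)}$ satisfying $\zeta_t = \phi(\psi(t))$ for every $t \in h(S)$.

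Given the fixpoint, set $f := \tilde{g} \circ \phi^{-1}$, holomorphic on $\phi(D)$ because $\phi$ solves the Beltrami equation for $\mu(\tilde{g})$. Normalizability gives $\psi(\pm 1) = \pm 1$ and $\psi(\infty) = \infty$, and the standard normalization of $\phi$ fixes $\pm 1, \infty$, so $\zeta_{\pm 1} = \pm 1$ and $\zeta_\infty = \infty$; consequently, the critical values of $f$ (which coincide with those of $\tilde{g}$, namely $\{\pm 1, \infty\} \cup \tilde{h}(\mathcal{V})$) reduce to exactly $\Phi(h(S))$, where $\Phi(t) := \phi(\psi(t))$. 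For every $s \in S$ the fixpoint equation yields $f(\Phi(s)) = \tilde{g}(\psi(s)) = \tilde{h}(\psi(s)) = \zeta_{h(s)} = \Phi(h(s))$, so $\Phi(h(S))$ is forward invariant under $f$, hence $P(f) = \Phi(h(S)) \subset \phi(D)$, and $\Phi$ conjugates $h|_{h(S)}$ to $f|_{P(f)}$. The bound $d(\Phi(t), t) < \varepsilon$ makes $\Phi$ an $\varepsilon$-conjugacy, and Proposition \ref{no_asymp_values} (applied to $\tilde{g}$ and transferred through the homeomorphism $\phi^{-1}$) shows $f$ has no asymptotic values. The main obstacle I anticipate is the self-map property $\Upsilon(X) \subseteq X$, which forces the delicate matching of the constants $\pi/12$ and $\varepsilon/2$ arranged above; once this and continuity are in hand, the conjugacy, the identification of $P(f)$, and the absence of asymptotic values all fall directly out of the fixpoint equation.
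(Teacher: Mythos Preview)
Your proposal is correct and follows essentially the same approach as the paper's own proof: apply Schauder--Tychonoff to $\Upsilon$ on $X = \prod_{t\in h(S)}\overline{B(t,\pi/12)}$, use the fixpoint to define $f = \tilde{g}\circ\phi^{-1}$, and read off the conjugacy $\Phi = \phi\circ\psi$ from the fixpoint equation together with the normalization $\psi(\pm1)=\pm1$, $\psi(\infty)=\infty$ and $\phi$ fixing $\pm1,\infty$. Your explicit reduction to $\varepsilon\leq\pi/12$ (needed so that $\Upsilon(X)\subseteq X$) is a point the paper leaves implicit, so your write-up is in fact slightly cleaner on that step.
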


\begin{proof}

\noindent We let $D$, $S$, $h$, $\varepsilon$ be as in the statement of Theorem \ref{mainthmmod1}. Fix $n>0$ sufficiently large so that the conclusions of Theorem \ref{vertex_choice} and Proposition \ref{phi_close_to_id} hold for $h: S\rightarrow S$ and $\varepsilon/2$, and so that the vertex set $\mathcal{V}:=\mathcal{V}(n,h,\varepsilon/2)$ is as in Definition \ref{normalizably_triangulable}.  By (3) of Theorem \ref{vertex_choice} and Proposition \ref{phi_close_to_id}, if $\tilde{h}: \mathcal{V}\rightarrow \Chat$ is any map such that \begin{equation}\label{second} \sup_{v\in \mathcal{V}}d(\tilde{h}(v), h\circ\psi^{-1}(v))\leq\pi/12, \end{equation} then $n$, $\mathcal{V}$, $\tilde{h}$ are permissible and \begin{equation}\label{first}\sup_{z\in\Chat}d(\phi(z),z)<\varepsilon/2. \end{equation} Thus, given \begin{equation}\label{again} (\zeta_t)_{t\in h(S)} \in \prod_{t\in h(S)}\overline{B(t,\pi/12)}, \end{equation} we define $\tilde{h}$ as in Definition \ref{psi_definition_environment} by \begin{equation} \tilde{h}\circ\psi(s):=\zeta_t \textrm{ for all } t\in h(S) \textrm{ and } s\in h^{-1}(t), \end{equation} which in turn defines the mappings  $\tilde{g}$, $\phi$, where $\phi$ satisfies (\ref{first}). %Thus 

%Thus, as in Definition \ref{psi_definition_environment}, the mappings $\tilde{g}$, $\phi$ (where $\phi$ satisfies (\ref{first})) are determined by a choice \begin{equation}\label{again} (\zeta_t)_{t\in h(S)} \in \prod_{t\in h(S)}\overline{B(t,\pi/12)}, \end{equation} since (\ref{again}) defines \begin{equation} \tilde{h}\circ\psi(s):=\zeta_t \textrm{ for all } s\in h^{-1}(t) \end{equation}

% Note that the pair $\tilde{g}$, $\phi$ are then determined solely by a choice of $\tilde{h}$ such that $n$, $\mathcal{V}$, $\tilde{h}$ are permissible.

% By (3) of Theorem \ref{vertex_choice} and Proposition \ref{phi_close_to_id}, if $\tilde{h}: \mathcal{V}\rightarrow \Chat$ is any map such that \begin{equation}\label{second} \sup_{v\in \mathcal{V}}d(\tilde{h}(v), h\circ\psi^{-1}(v))\leq\pi/12, \end{equation} then $n$, $\mathcal{V}$, $\tilde{h}$ are permissible and \begin{equation}\label{first}\sup_{z\in\Chat}d(\phi(z),z)<\varepsilon/2. \end{equation} %{\color{red} need to explain better that }

Consider now the mapping $\Upsilon$ of Definition \ref{psi_definition_environment}. By (\ref{first}) and (1) of Theorem \ref{vertex_choice}, we have for any $(\zeta_t)_{t\in h(S)} \in \prod_{t\in h(S)}\overline{B(t,\pi/12)}$ that: \begin{equation}\label{phipsiid} d(\phi\circ\psi(t), t) \leq d(\phi\circ\psi(t), \psi(t))+d(\psi(t), t) < \varepsilon/2+\varepsilon/2=\varepsilon. \end{equation} Thus in fact $\Upsilon$ defines a map:  \begin{align}\label{fixpoint_map} \Upsilon: \prod_{t\in h(S)} \overline{B(t,\pi/12)}  \rightarrow \prod_{t\in h(S)} \overline{B(t,\varepsilon)}. \end{align} We claim that $\Upsilon$ has a fixpoint. Indeed, $\Upsilon$ is continuous by Proposition \ref{continuity_of_psi}, and the domain of $\Upsilon$ is compact and convex, so Theorem \ref{schauder_tychonoff} implies the existence of a fixpoint of $\Upsilon$. %in the codomain of (\ref{fixpoint_map}). 

The fixpoint of $\Upsilon$ yields a choice of $\tilde{g}$, $\phi$ such that \begin{equation}\label{fixpoint_property} \tilde{h}\circ\psi(s)=\phi\circ\psi(t)\textrm{, for all } t\in h(S) \textrm{ and } s\in h^{-1}(t). \end{equation} Define the holomorphic map $f:=\tilde{g}\circ\phi^{-1}: \phi(D)\rightarrow\Chat$. We will show that $f$, $\phi$ satisfy the conclusions of the Theorem. We have already proven (see (\ref{first})) that $\phi$ is an $\varepsilon$-homeomorphism. We claim that $\{\pm1,\infty\}\subset \tilde{h}(\mathcal{V})$. Indeed, if $t\in\{\pm1,\infty\}$ and $s\in h^{-1}(t)$ (here we are using the assumption that $\pm1,\infty\in h(S)$), then by (\ref{fixpoint_property}) and (2) of Definition \ref{normalizably_triangulable} we have $\tilde{h}\circ\psi(s)=\phi\circ\psi(t)=\phi(t)=t$. Thus by Proposition \ref{no_asymp_values}, we conclude that $f$ has no asymptotic values and \begin{equation}\label{first_desired}  P(f) = \{\pm1,\infty\}\cup \tilde{h}(\mathcal{V}) =  \tilde{h}(\mathcal{V}).  \end{equation} Also, by (\ref{fixpoint_property}), we have: \begin{equation}\label{second_desired} \tilde{h}(\mathcal{V}) = \tilde{h}\circ\psi(S) = \phi\circ\psi(h(S)), \end{equation} and since $\psi(h(S))\subset D$ (since $\psi$ maps to vertices in a triangulation of $D$), we have $P(f)=\tilde{h}(\mathcal{V})\subset\phi(D)$. It remains to show that $f|_{P(f)}: P(f)\rightarrow P(f)$ and $h|_{h(S)}: h(S)\rightarrow h(S)$ are $\varepsilon$-conjugate. Indeed, we claim that $\phi\circ\psi: h(S) \rightarrow P(f)$ is the desired conjugacy. By (\ref{first_desired}) and (\ref{second_desired}) we have that $\phi\circ\psi: h(S) \rightarrow P(f)$ is onto and hence a bijection. By (\ref{phipsiid}), we have that $\phi\circ\psi: h(S) \rightarrow P(f)$ is an $\varepsilon$-bijection. Lastly, for all $t\in h(S)$: \begin{equation} f\circ\phi\circ\psi(t)=\tilde{g}\circ\psi(t)=\tilde{h}\circ\psi(t) = \phi\circ\psi\circ h(t), \end{equation} where the first $=$ is since $f:=\tilde{g}\circ\phi^{-1}$, the second $=$ is (1) of Proposition \ref{quasiregular_map}, and the last $=$ is by (\ref{fixpoint_property}).

\end{proof}

\noindent Now we remove the hypothesis of Definition \ref{normalizably_triangulable} from Theorem \ref{mainthmmod1}:

\begin{thm}\label{mainthmmod1'} Let $D\subseteq\widehat{\mathbb{C}}$ be a domain, $S\subset D$ a discrete set, $h: S \rightarrow S$ a map with $\pm1$, $\infty\in h(S)$, and $\varepsilon>0$. Then there exists an $\varepsilon$-homeomorphism $\phi:\Chat\rightarrow\Chat$ and a holomorphic map $f: \phi(D) \rightarrow \widehat{\mathbb{C}}$ with no asymptotic values such that $P(f)\subset \phi(D)$ and $f|_{P(f)}: {P(f)}\rightarrow P(f)$ is $\varepsilon$-conjugate to $h|_{h(S)}: h(S)\rightarrow h(S)$.
\end{thm}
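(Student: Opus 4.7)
The plan is to adapt the proof of Theorem \ref{mainthmmod1} by modifying the normalization of the quasiconformal correction map $\phi$, so as to bypass the normalizability hypothesis. Fix $n$ large, let $\mathcal{V}=\mathcal{V}(n,h,\varepsilon/2)$ and $\psi_n:S\to\mathcal{V}$ be as in Theorem \ref{vertex_choice}, and set $v_1:=\psi_n(1)$, $v_{-1}:=\psi_n(-1)$, $v_\infty:=\psi_n(\infty)$; these are pairwise non-adjacent vertices of $\mathcal{T}_n$ lying within spherical distance $\varepsilon/2$ of $1,-1,\infty$, respectively. I will keep the quasiregular perturbation $\tilde g$ exactly as in Section \ref{abasefamily}, but redefine $\phi$ as the unique quasiconformal homeomorphism of $\Chat$ whose Beltrami coefficient is $\mu(\tilde g)$ on $D$ and $0$ on $\Chat\setminus D$, now normalized by $\phi(v_1)=1$, $\phi(v_{-1})=-1$, $\phi(v_\infty)=\infty$. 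This normalization is chosen precisely so that $\phi\circ\psi_n(\pm1)=\pm1$ and $\phi\circ\psi_n(\infty)=\infty$ hold by construction, replacing what the normalizability hypothesis supplied for free in Theorem \ref{mainthmmod1}.

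Next I will verify that the technical ingredients survive the change of normalization. Proposition \ref{dilatation_constant} and Proposition \ref{quasiregular_map} are unaffected because $\mu(\phi)$ has the same support and dilatation bound as before. Any solution of the Beltrami equation for $\mu(\phi)$ differs from the identity-normalized one by post-composition with a M\"obius transformation; the distinguished M\"obius transformation under my normalization is the one sending $v_1,v_{-1},v_\infty$ back to $\pm1,\infty$, which is close to the identity once those vertices are close to $\pm1,\infty$. Combined with the usual fact that the identity-normalized solution is close to the identity when the support of the Beltrami coefficient has small area (Theorem \ref{vertex_choice}(2)), this gives the analogue of Proposition \ref{phi_close_to_id} for the renormalized $\phi$. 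The continuity argument for $\Upsilon$ (Theorem \ref{continuity_of_psi}) uses only the uniform dilatation bound and the smooth dependence of $\mu(\tilde g)$ on its parameters, so it carries over verbatim. Hence Schauder--Tychonoff (Theorem \ref{schauder_tychonoff}) again yields a fixpoint satisfying $\tilde h(\psi_n(s))=\phi\circ\psi_n(h(s))$ for every $s\in S$.

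Applying the fixpoint identity at any $s\in h^{-1}(t)$ with $t\in\{\pm1,\infty\}$ (nonempty because $\pm1,\infty\in h(S)$) and invoking the new normalization gives $\tilde h(\psi_n(s))=\phi(v_t)=t$, so $\{\pm1,\infty\}\subset\tilde h(\mathcal{V})$. The critical values of $\tilde g$, which are $\{\pm1,\infty\}\cup\tilde h(\mathcal{V})$, therefore reduce to $\tilde h(\mathcal{V})=\phi\circ\psi_n(h(S))$, a set forward invariant under $f:=\tilde g\circ\phi^{-1}$ by the fixpoint relation. Proposition \ref{no_asymp_values} ensures $f$ has no asymptotic values, so $P(f)=\phi\circ\psi_n(h(S))\subset\phi(D)$, and the same computation as at the end of the proof of Theorem \ref{mainthmmod1} shows that $\phi\circ\psi_n:h(S)\to P(f)$ is the required $\varepsilon$-conjugacy. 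The main obstacle will be the adapted version of Proposition \ref{phi_close_to_id}: I will need quantitative control on how the normalized quasiconformal solution depends on its three normalization points as these vary with $n$, so that the fixpoint argument closes uniformly. This is the only place where the proof requires genuine work beyond a mechanical re-reading of Theorem \ref{mainthmmod1}.
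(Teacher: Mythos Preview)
Your approach is correct and is essentially the same idea as the paper's, but packaged differently. The paper pre-composes with a M\"obius transformation $M$ sending $\psi_n(\pm1),\psi_n(\infty)$ to $\pm1,\infty$, then invokes Theorem~\ref{mainthmmod1} as a black box on $M(D)$ with a modified map $h'$; you instead leave $D$ alone and absorb this M\"obius map into the normalization of the integrating map $\phi$, then rerun the fixpoint argument in place. These are two realizations of the same correction: in the paper the M\"obius sits in front of the whole construction, in your version it sits behind $\phi_{\text{std}}$ as the factor $M'$ determined by $M'(\phi_{\text{std}}(v_t))=t$. Your version is arguably cleaner (it avoids the piecewise definition of $h'$ and of $S'$), while the paper's has the advantage of being fully modular, quoting Theorem~\ref{mainthmmod1} rather than reopening its proof.

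One quantitative point to tighten: with $\mathcal V(n,h,\varepsilon/2)$ you only get $d(v_t,t)<\varepsilon/2$, so the M\"obius factor $M'$ moves points by at most $C(\varepsilon/2+\delta_n)$ for some absolute $C>1$ coming from the well-separated triple $\{1,-1,\infty\}$; this does not give $\sup_z d(\phi(z),z)<\varepsilon/2$ unless you start from a smaller parameter. The paper handles exactly this by introducing an auxiliary $\varepsilon'$ and taking it small enough that the M\"obius is an $\varepsilon/2$-homeomorphism. In your write-up you should likewise take $\mathcal V(n,h,\varepsilon')$ with $\varepsilon'$ chosen so that $C\varepsilon'<\varepsilon/4$, say, and then take $n$ large enough that $\delta_n$ absorbs the rest. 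With that adjustment, your verification of Proposition~\ref{phi_close_to_id} and of the self-mapping property of $\Upsilon$ goes through, and the rest of your argument is sound.
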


\begin{proof}
%Thus we have proven Theorem \ref{mainthmmod1} in the case that $\mathcal{V}$ satisfies ($\ast$) and ($\ast\ast$). We now consider the general case. 
We let $D$, $S$, $h$, $\varepsilon$ be as in the statement of Theorem \ref{mainthmmod1'}. Let $\varepsilon'>0$, and recall the bijection $\psi=\psi_{n, h, \varepsilon'}: \mathcal{V}(n, h, \varepsilon') \rightarrow S$ of Theorem \ref{vertex_choice}. Define a M\"obius transformation $M=M_n$ by \begin{equation}\label{containsnormalized} M\circ\psi_{n, h, \varepsilon'}(s)=s \textrm{ for } s\in\{\pm1,\infty\}. \end{equation} Then, by fixing $\varepsilon'$ sufficiently small, we have that $M$ is an $\varepsilon/2$-homeomorphism for all sufficiently large $n$. We define \begin{equation} S':=M(S\setminus\{\pm1,\infty\})\cup\{\pm1, \infty\}. \end{equation} We define $h': S' \rightarrow S'$ by a simple adjustment of the definition of $h$: \begin{equation}\label{h'definition} h'(s):= \begin{cases} 
	M\circ h\circ M^{-1}(s)  \hspace{4mm} \textrm{ if } s, h(s)\not\in\{\pm1,\infty\} \\
	h(s) \hspace{25mm} \textrm{ if } s, h(s)\in\{\pm1,\infty\} \\
        M\circ h(s) \hspace{17mm} \textrm{ if }  s \in \{\pm1, \infty\}, h(s)\not\in\{\pm1,\infty\} \\
        h\circ M^{-1}(s) \hspace{13mm} \textrm{ if }  s \not\in \{\pm1, \infty\}, h(s)\in\{\pm1,\infty\}
   \end{cases} \end{equation} For $n>0$, let $\mathcal{T}_n$ denote the triangulation of $D$ of Theorem \ref{vertex_choice}. Note that $M(\mathcal{T}_n)$ is a triangulation of $M(D)$, and moreover by (\ref{containsnormalized}) the vertex set $M(\mathcal{V}(n,h,\varepsilon/2))\subset M(\mathcal{T}_n)$ contains $\pm1,\infty$. Thus Theorem \ref{mainthmmod1} applies to $M(D)$, $S'$, $h'$, $\varepsilon/2$ to yield an $\varepsilon/2$-homeomorphism $\tilde{\phi}:\Chat\rightarrow\Chat$ and a holomorphic map $f: \tilde{\phi}\circ M(D)\rightarrow\Chat$ with no asymptotic values such that $P(f)\subset \tilde{\phi}\circ M(D)$ and $f: P(f)\rightarrow P(f)$ is $\varepsilon/2$-conjugate to $h'|_{h'(S')}: h'(S') \rightarrow h'(S')$. We claim that $\phi:=\tilde{\phi}\circ M$ and $f$ satisfy the conclusions of Theorem \ref{mainthmmod1'}.
   
Indeed, since $M$ is an $\varepsilon/2$-homeomorphism, it follows that $\phi=\tilde{\phi}\circ M$ is an $\varepsilon$-homeomorphism. We have already justified that $P(f)\subset \tilde{\phi}\circ M(D)$. Lastly, by Definition (\ref{h'definition}), $h'|_{h'(S')}: h'(S') \rightarrow h'(S')$ is $\varepsilon/2$-conjugate to $h|_{h(S)}: h(S) \rightarrow h(S)$, and so $f|_{P(f)}: P(f)\rightarrow P(f)$ is $\varepsilon$-conjugate to  $h|_{h(S)}: h(S) \rightarrow h(S)$.
   
%\vspace{5mm}
   
% \begin{equation} \sup_{z\in\Chat}d(M(z),z)\xrightarrow{n\rightarrow\infty}0. \end{equation}   
   
%    Then the vertex set $M(\mathcal{V}(n,h,\varepsilon/2))$ in the triangulation $M(\mathcal{T}_n)$ of $M(D)$ shows that case 1 applies to $M(D)$, $S'$, $h'$, $\varepsilon/2$. Thus case 1 applies to yield $\tilde{g}: M(D) \rightarrow \Chat$, $\phi$ satisfying the conclusions of the Theorem. Since $M$ is an $\varepsilon/2$-homeomorphism, $\phi\circ M$ is an $\varepsilon$-homeomorphism, and $\tilde{g}\circ (\phi\circ M)^{-1}: D\rightarrow \Chat$ and $\phi\circ M: \Chat\rightarrow\Chat$ prove the Theorem.  

\end{proof}

\noindent Next we remove the assumption that $\pm1$, $\infty \in h(S)$.

\begin{thm}\label{mainthmmod2} Let $D\subseteq\widehat{\mathbb{C}}$ be a domain, $S\subset D$ a discrete set with $|h(S)|\geq3$, $h: S \rightarrow S$ a map, and $\varepsilon>0$. Then there exists an $\varepsilon$-homeomorphism $\phi:\Chat\rightarrow\Chat$ and a holomorphic map $f: \phi(D) \rightarrow \widehat{\mathbb{C}}$ with no asymptotic values such that $P(f)\subset \phi(D)$ and $f|_{P(f)}: {P(f)}\rightarrow P(f)$ is $\varepsilon$-conjugate to $h|_{h(S)}: h(S)\rightarrow h(S)$.
\end{thm}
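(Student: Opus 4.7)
The plan is to reduce Theorem \ref{mainthmmod2} to Theorem \ref{mainthmmod1'} by conjugating the given dynamical system with a single Möbius transformation that sends three chosen points of $h(S)$ to $\pm 1$ and $\infty$. This is the mirror image of the reduction used to remove the normalizably triangulable hypothesis in Theorem \ref{mainthmmod1'}, except that the conjugating transformation here is not close to the identity.

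First, since $|h(S)| \geq 3$, I fix three distinct points $a,b,c \in h(S)$ and let $M$ be the unique Möbius transformation with $M(a) = -1$, $M(b) = 1$, $M(c) = \infty$. Because $M$ is a diffeomorphism of $\Chat$ with respect to the spherical metric and $\Chat$ is compact, $M^{-1}$ is uniformly continuous; I select $\varepsilon' \in (0, \varepsilon)$ small enough that $d(w,w') < \varepsilon'$ implies $d(M^{-1}(w), M^{-1}(w')) < \varepsilon$. Set $D' := M(D)$, $S' := M(S)$, and $h' := M \circ h \circ M^{-1} : S' \to S'$. By construction $\{\pm 1, \infty\} = M(\{a,b,c\}) \subset M(h(S)) = h'(S')$, and $|h'(S')| = |h(S)| \geq 3$, so all hypotheses of Theorem \ref{mainthmmod1'} are satisfied.

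Applying Theorem \ref{mainthmmod1'} to $D', S', h', \varepsilon'$ yields an $\varepsilon'$-homeomorphism $\phi' : \Chat \to \Chat$ and a holomorphic $f' : \phi'(D') \to \Chat$ with no asymptotic values, such that $P(f') \subset \phi'(D')$ and $f'|_{P(f')}$ is $\varepsilon'$-conjugate to $h'|_{h'(S')}$ via a bijection $\chi' : h'(S') \to P(f')$. I then define $\phi := M^{-1} \circ \phi' \circ M$ and $f := M^{-1} \circ f' \circ M$ on the domain $\phi(D) = M^{-1}(\phi'(D'))$. Because $M$ is a Möbius transformation, $f$ is holomorphic; and the estimate $d(\phi(z),z) = d(M^{-1}(\phi'(Mz)), M^{-1}(Mz)) < \varepsilon$ (by the uniform continuity of $M^{-1}$) shows $\phi$ is an $\varepsilon$-homeomorphism. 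Asymptotic values and the postsingular set pass cleanly through conjugation by $M$: any asymptotic value of $f$ would, via $M$, produce one for $f'$, a contradiction; and a routine calculation gives $P(f) = M^{-1}(P(f')) \subset M^{-1}(\phi'(D')) = \phi(D)$. Setting $\chi := M^{-1} \circ \chi' \circ M : h(S) \to P(f)$, one checks $\chi$ is a bijection, the identity $f \circ \chi = \chi \circ h$ follows from the analogous identity for $f'$, and uniform continuity of $M^{-1}$ makes $\chi$ an $\varepsilon$-homeomorphism.

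The only substantive issue is quantitative: because $M$ need not be close to the identity, a naive conjugation would destroy the $\varepsilon$-homeomorphism and $\varepsilon$-conjugacy bounds. The remedy, however, is straightforward—one absorbs the modulus of continuity of $M^{-1}$ into the single preliminary choice of $\varepsilon'$, after which the proof is entirely mechanical. I do not expect any serious obstacle beyond this bookkeeping.
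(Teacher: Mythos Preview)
Your proposal is correct and follows essentially the same approach as the paper: conjugate by a M\"obius transformation $M$ sending three points of $h(S)$ to $\pm1,\infty$, apply Theorem~\ref{mainthmmod1'} with a suitably shrunken $\varepsilon$, and pull back via $M^{-1}$. In fact you supply more detail than the paper does (it simply writes ``aptly chosen $\varepsilon(M)$'' and ``it is straightforward to then check''), and your explicit use of the uniform continuity of $M^{-1}$ on $\Chat$ to select $\varepsilon'$ is exactly the right way to make the paper's sketch precise.
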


\begin{proof} We let $D$, $S$, $h$, $\varepsilon$ be as in the statement of Theorem \ref{mainthmmod2}. Let $M$ be a M\"obius transformation sending any three points of $h(S)$ to $\pm1$, $\infty$. Then applying Theorem \ref{mainthmmod1'} to $M(D)$, $M(S)$, $M\circ h\circ M^{-1}$, $\varepsilon(M)$ yields mappings we will denote by \begin{equation} \tilde{\phi}: \Chat\rightarrow\Chat \textrm{ and } \tilde{f}: \tilde{\phi}\circ M(D)\rightarrow\Chat. \end{equation} It is straightforward to then check that the functions $\phi:=M^{-1}\circ \tilde{\phi}\circ M$ and $f:=M^{-1}\circ \tilde{f}\circ M$ satisfy the conclusions of Theorem \ref{mainthmmod2} for aptly chosen $\varepsilon(M)$.
\end{proof}

\noindent In the case that $h$ is onto, Theorem \ref{mainthmmod2} is exactly Theorem \ref{main_thm}, and so all that remains is to consider the case that $h$ is not onto:

\vspace{2mm}

\noindent \emph{Proof of Theorem \ref{main_thm}.} We let $D$, $S$, $h$, $\varepsilon$ be as in the statement of Theorem \ref{main_thm}. We augment the set $S$ to a set $S'\supset S$ so that $S'$ is still discrete in $D$, and such that we can define a mapping $h': S' \rightarrow S'$ such that $h'(S')= S$ and $h'|_{S}=h$. Then since $h'|_{h'(S')}: h(S')\rightarrow h(S')$  is the same function as $h: S\rightarrow S$, applying Theorem \ref{mainthmmod2} to $D$, $S'$, $h'$, $\varepsilon$ yields the desired functions in the conclusion of Theorem \ref{main_thm}.

\section{Conformal Grid Annuli}
\label{conformal_grid_annuli}

In Sections \ref{conformal_grid_annuli}-\ref{Triangulating_domains}, we turn our attention to the proof of Theorem \ref{theorem_B}. As mentioned in the Introduction, Sections \ref{conformal_grid_annuli}-\ref{Triangulating_domains} may be read independently of Sections \ref{moving}-\ref{proof_of_main_thm}. We begin by studying the annuli in which we will interpolate between two different triangulations, as described in Section \ref{outline}. First we will need several definitions. 

\begin{rem} In Sections \ref{introduction_section}-\ref{proof_of_main_thm}, we used the spherical metric whenever measuring distance or diameter in the plane. In Sections \ref{conformal_grid_annuli}-\ref{Triangulating_domains} we will more often use Euclidean distance and Euclidean diameter, and we will denote these by $\dist$, $\diameter$, (respectively) to distinguish them from their spherical counterparts which we have been denoting by $d$, $\diam$. In fact, the distinction between the two metrics will not be crucial since the proof of Theorem \ref{theorem_B} only uses the Euclidean metric in a compact subset of $\mathbb{D}$ where it is Lipschitz-equivalent to the spherical metric. 
\end{rem}

%\begin{rem} In Sections \ref{introduction_section}-\ref{proof_of_main_thm} we always used the spherical metric  \end{rem}

%Our proof of Theorem \ref{main_thm} relied on Theorem \ref{theorem_B}.

\begin{definition} An {\it equilateral grid polygon} is a simple
closed polygon that lies on the edges of 
a Euclidean equilateral triangulation of the plane. 
An {\it equilateral grid annulus} is a topological 
annulus in $\reals^2$  so that the two boundary 
components are both equilateral grid polygons 
(on the same grid).
\end{definition} 
%{\color{red} $<$-should we emphasize that it has to be the same grid for both boundary components? } 

\begin{definition} Let $A$ be an equilateral grid polygon or annulus lying on the edges of a triangulation $\mathcal{T}$ with vertices $V$. The \emph{vertices} of $A$ are defined as $V\cap\partial A$. If a triangle $T\in\mathcal{T}$ has non-empty intersection with $\partial A$, we call $T$ a boundary triangle of $A$. 
If $A$ is an annulus, the \emph{thickness} of $A$ is defined as the minimum number of grid triangles needed to 
connect the two components of $\partial A$.
\end{definition}

\begin{notation} For any topological annulus $A$ in the plane, 
we let $\partial_o A$ and $\partial_i A$ denote 
the outer and inner connected  components of 
$\partial A$, in other words, $\partial_o A$ separates $A$
from $\infty$. 
\end{notation}
 
Recall that any  planar topological annulus  with 
non-degenerate boundary components 
can be conformally mapped to a round annulus 
of the form $B =\{ 1 < |z| < 1+\delta\}$, and
this map is unique up to rotation and inversion. We will be concerned primarily with the case where $\delta$ is small.

We wish to consider conformal images of 
equilateral grid annuli, but also a slightly 
more general class of annuli where each boundary component 
has a  one-sided neighborhood that is a conformal image 
of a equilateral grid annulus. More precisely, we define the following:

\begin{definition}\label{confgridanndefn} Let $A$ be a topological annulus so that both components of $\partial A$ are Jordan curves. We shall call $A$ a {\it conformal grid annulus}
if there exists a finite set $V\subset\partial A$ (called the {\it vertices} of $A$), two conformal maps $f_o, f_i$ on $A$ with the property that $f_o(A)$, $f_i(A)$ are topological annuli, and equilateral grid annuli $A_o, A_i$ so that for $k=o, i$: 
\begin{enumerate} 
\item $ A_k \subset f_k(A) $,
\item $\partial_k (f_k(A)) =\partial_k A_k$, 
\item $f_k(V \cap \partial_k A)$ equals the vertices on $\partial_k A_k$.
\end{enumerate}  

\end{definition}

If $f_o=f_i$ and $A_o=A_i$, conditions (1)-(3) in Definition \ref{confgridanndefn} just say that 
$A$ is the conformal image of a single 
equilateral grid annulus $A_o$ and the vertices of $A$ are the 
images of the vertices of $A_o$.

\begin{definition} The vertices of a conformal grid annulus $A$ naturally partition $\partial A$ into segments which we call the \emph{sub-arcs} of $\partial A$. We say two sub-arcs are \emph{adjacent} if they share a common endpoint. 
\end{definition}

\begin{definition} Given a conformal grid annulus $A$, we define 
$$ {\rm{inrad}}(A) = \sup_{z \in A} \dist(z, \partial A),$$
to be the \emph{in-radius} of $A$, and 
$$ {\rm{gap}}(A) = \sup \{ \diameter(\gamma) : \gamma\textrm{ is a sub-arc of } \partial A\}$$
to be the maximum (Euclidean) diameter of the sub-arcs of $\partial A$.
\end{definition}

 Later we will find 
triangulations of $A$ whose elements have diameters 
controlled by these quantities. 

Let notation be as in Definition \ref{confgridanndefn}. If $T$ is an outer boundary triangle of $A_o$, we will call
the topological triangle $f^{-1}_o(T)$ a boundary 
triangle of $A$. Similarly for $A_i$.
In our main application, the inner boundary of $A$ will be
an equilateral grid polygon and the $f_i$ will be the 
identity map. The associated boundary triangles of $A$ are 
then Euclidean equilateral. The outer boundary of $A$ will be the 
image of an equilateral grid polygon under a map $f_o^{-1}$ that 
extends conformally past $\partial_o A$. Thus the boundary 
triangles of $A$ along its outer boundary will be small, smooth 
perturbations of equilateral triangles. 

%{\color{red} I moved this paragraph out of the proof
%of the next lemma -$>$}
Below we shall use several 
standard properties of conformal modulus. This is a well known conformal invariant whose basic properties
are discussed in many  sources such as
\cite{Ahlfors-QCbook} or \cite{MR2150803}.
We briefly recall the basic definitions.
Suppose $\Gamma$ is a path family (a collection of locally rectifiable curves)
in a planar domain $\Omega$ and $\rho$ is a non-negative Borel
function  on $\Omega$. We say $\rho$ is admissible for $\Gamma$
(and write $\rho \in {\mathcal A}(\Gamma)$) if
$$ \ell(\Gamma) = \ell_\rho(\Gamma)
 = \inf_{\gamma \in \Gamma} \int_\gamma \rho ds \geq 1,$$
and define the modulus of $\Gamma$ as
$$ \Mod(\Gamma) = \inf_\rho  \int \rho^2 dxdy,$$
where the infimum is over all admissible $\rho$ for $\Gamma$.
We shall frequently use the following property of conformal modulus known as the extension rule: if $\Gamma, \Gamma'$
are path families so that every element $\gamma' 
\in \Gamma'$ equals or contains an 
element $\gamma \in \Gamma$ then $M(\Gamma) 
\leq M(\Gamma')$ (since if $\rho$ is admissible for $\Gamma$, it 
is also admissible for $\Gamma'$ so the infimum for 
$\Gamma$ is over a smaller set of metrics). We shall 
use the following basic facts later: the modulus of the path family connecting the 
two boundary components of $\{1<|z| < R\}$ is $2 \pi/
\log R$, and so the extension rule implies that any 
path family where every curve crosses such an 
annulus has modulus $\leq 2\pi/\log R$.

\begin{lem}  \label{comparable arcs}
Suppose $A$ is a conformal grid annulus and that 
there are at least four vertices on each 
component of $\partial A$. 
Suppose $f:A \to B= \{ z: 1< |z| < 1+\delta\}$ 
is a conformal map of $A$ onto a round annulus. 
This sends the sub-arcs on $\partial A$ to sub-arcs on 
$\partial B$.
Then there is an $M< \infty$, independent of $A$,
so that any two adjacent sub-arcs on $\partial B$ 
have lengths
comparable to within a factor $ M $, and  
every sub-arc in $B$ has length $\leq M \delta$. 
\end{lem}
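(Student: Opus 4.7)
The plan is to prove both claims for the inner boundary of $A$; the outer boundary is handled symmetrically with $f_o, A_o$ in place of $f_i, A_i$. The central object is the conformal map $g := f \circ f_i^{-1} : f_i(A) \to B$, which carries the equilateral grid polygon $\partial_i A_i$ onto $\partial_i B = \{|\zeta|=1\}$ and sends vertices of $A_i$ on $\partial_i A_i$ to the marked points on $\partial_i B$; in these coordinates the sub-arcs of $\partial_i B$ are precisely the $g$-images of the unit edges of $\partial_i A_i$.

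For the first claim, fix a vertex $v$ of $\partial_i A_i$ with adjacent unit edges $e_1, e_2$ whose far endpoints are $v_1, v_2$, meeting at interior angle $\theta_v \in \{k\pi/3 : 1 \le k \le 5\}$ inside $A_i$. Since $g$ carries the corner $v$ (angle $\theta_v$) to the analytically smooth point $g(v)$ of effective angle $\pi$ on $\{|\zeta|=1\}$, the Schwarz--Christoffel asymptotics give $g(z) - g(v) \sim c(z-v)^{\pi/\theta_v}$ along the sector at $v$. Choose the power map $\psi(w) = v + e^{i\alpha}w^{\theta_v/\pi}$ (principal branch, with $\alpha$ arranged so that $\psi(\pm 1) = v_1, v_2$), which opens the sector conformally onto $\mathbb{H}$. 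The composition $G := g \circ \psi$ is then conformal at $w=0$ with nonvanishing derivative because the two power behaviors exactly cancel: $G(w) - g(v) = c\, e^{i\alpha\pi/\theta_v}\, w + O(w^{1+\pi/\theta_v})$. Since $G$ sends a real neighborhood of $0$ into the analytic arc $\{|\zeta|=1\}$, Schwarz reflection (conjugation in the source, $\zeta \mapsto 1/\overline\zeta$ in the target) extends $G$ to a univalent holomorphic map on a disk $D(0, R_0)$ with $R_0 > 1$, provided $\psi(D(0,R_0)\cap\overline{\mathbb{H}}) \subset f_i(A)$; as this preimage sits in the sector at $v$ within Euclidean radius $R_0^{\theta_v/\pi} \leq R_0^{5/3}$, this holds as soon as $A_i$ has a uniform lower bound on thickness at $v$. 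Koebe's distortion theorem applied to the renormalized $G$ then forces
\[
|g(v_1) - g(v)| = |G(1) - G(0)| \;\asymp\; |G(-1) - G(0)| = |g(v_2) - g(v)|
\]
with a universal constant, and converting chord length to arc length on $\{|\zeta|=1\}$ (comparable up to a universal factor) yields $|J_1| \asymp |J_2|$.

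For the second claim, fix a sub-arc $J \subset \partial_i B$ of length $L$ corresponding under $g$ to a unit edge $e \subset \partial_i A_i$, and let $\Gamma$ be the family of curves in $B$ joining $J$ to $\partial_o B$. The annular sector $\{re^{i\theta} : 1 < r < 1+\delta,\ e^{i\theta} \in J\}$ is conformally a rectangle of dimensions $\log(1+\delta)\times L$, whose top-to-bottom curve family (a subfamily of $\Gamma$) has modulus $L/\log(1+\delta)$, giving $\Mod(\Gamma) \geq L/\log(1+\delta) \geq L/\delta$. Transporting $\Gamma$ through $f^{-1}$ and $f_i$ produces a conformally equivalent family in $f_i(A)$ of curves from $e$ to $\partial_o f_i(A)$; since each such curve contains an initial segment in $A_i$ from $e$ to $\partial_o A_i$, the extension rule gives $\Mod(\Gamma) \leq \Mod(\Gamma')$, where $\Gamma'$ is the family of such initial segments in $A_i$. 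The test metric $\rho = 2\cdot \mathbbm{1}_{B(e,1/2)\cap A_i}$ is admissible for $\Gamma'$, since $\partial_o A_i$ lies at Euclidean distance at least $\sqrt{3}/2 > 1/2$ from $e$ (so any curve in $\Gamma'$ spends length at least $1/2$ inside $B(e,1/2)\cap A_i$, forcing $\int \rho\,ds \geq 1$), and its energy is at most $\pi$. Combining, $L \leq \pi\delta$.

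The main obstacle is the Schwarz-reflection step at the corner: to apply Koebe distortion with the test points $\pm 1$ strictly inside the disk of univalence, one needs $R_0 > 1$, which requires a uniform lower bound on the thickness of $A_i$ around each boundary vertex. This is a mild regularity hypothesis on the conformal grid annulus that one expects to be available (or enforceable by a barycentric subdivision) in the applications of Section 7.
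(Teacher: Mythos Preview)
Your modulus argument for the second claim (each sub-arc has length $\leq M\delta$) is essentially correct and follows the same line as the paper: transport the curve family joining $J$ to the opposite boundary of $B$ back to the grid annulus and bound its modulus there by a localized admissible metric. (Minor slip: the energy of $\rho = 2\cdot\mathbbm{1}_{B(e,1/2)\cap A_i}$ is at most $4+\pi$, not $\pi$, since the $\tfrac12$-neighborhood of a unit segment has area $1+\pi/4$; this is harmless.)

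The first claim, however, has a real gap, and the thickness hypothesis you propose does not close it. After straightening the corner at $v$ via $\psi$, the map $G=g\circ\psi$ sends the interval $(-1,1)$ into $\{|\zeta|=1\}$, and Schwarz reflection extends $G$ univalently to $D(0,1)$ --- but to no larger disk, irrespective of how thick $A_i$ is. The obstruction is not the far boundary $\partial_o A_i$ but the \emph{adjacent corners} $v_1,v_2$ on $\partial_i A_i$: for real $t$ with $|t|>1$ the point $\psi(t)$ lies on the ray from $v$ through $v_j$ \emph{beyond} $v_j$, and this lies on $\partial_i f_i(A)$ only in the special case that the interior angle at $v_j$ equals $\pi$; otherwise $G(t)\notin\{|\zeta|=1\}$ and the reflection breaks down at $t=\pm1$. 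With only $R_0=1$ available, the growth theorem yields the lower bound $|G(\pm1)-G(0)|\geq|G'(0)|/4$ but no matching upper bound (the Koebe function shows $|G(1)-G(0)|/|G(-1)-G(0)|$ can be unbounded for maps univalent on the unit disk), so comparability of $|J_1|$ and $|J_2|$ does not follow.

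The paper avoids this local analysis with a pure modulus argument: for three consecutive sub-arcs $I,J,K$, the modulus of the family in $A$ joining $I$ to $K$ is bounded above (via extension to the grid annulus, where the union of boundary triangles touching $I'\cup J'\cup K'$ has only finitely many similarity types) and below (via the family in $\mathbb{C}\setminus(I'\cup J'\cup K')$, again finitely many types). This two-sided bound, together with the length estimate $|J|=O(\delta)$, forces adjacent arcs in $B$ to be comparable, with no extra hypothesis on the grid annulus.
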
 

\begin{proof} %[Proof of Lemma \ref{comparable arcs}] 
Suppose  $J$  is a sub-arc of $\partial A$ and $I,K$ are the 
	two adjacent sub-arcs.
Let $\Gamma$ be the path family in $A$ that connects
$I$ to $K$.  If  $I,J,K$  are in the outer boundary 
of $A$ we let $f = f_o$ and  $A' = A_o$ and otherwise we set 
$f= f_i$ and $A' = A_i$. In either case we let
$I',J',K'$ be the corresponding line segments 
on the boundary of $A'$ and $\Gamma'$ the path 
family connecting $I'$ to $K'$ in $A'$.
Let $U$ be the union of all the boundary 
triangles of $A'$ that touch the boundary arc $\gamma' = 
I' \cup J'\cup K'$. Note that there are only 
finitely many shapes $\gamma'$ can have, and 
only finitely many shapes for $U$ (up to Euclidean similarity).

\begin{figure}[!htbp]
\centerline{
  \includegraphics[height=2in]{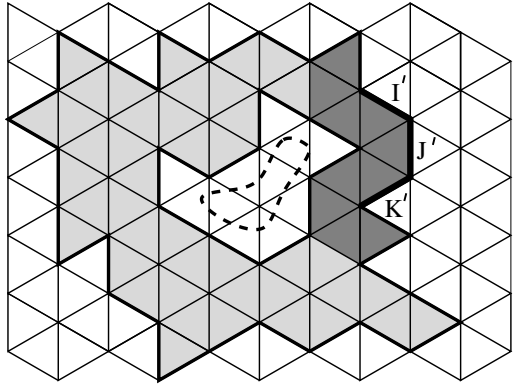}
}
\caption{ \label{GridPoly}
Here we assume that 
the outer boundary of $A$ maps to the outer 
boundary of a equilateral grid annulus $A'$ (shaded). 
The inner boundary of $f(A)$ (dashed) need not
coincide with the inner boundary of $A'$. Given 
three segments $I'$, $J'$ and $K'$ on the outer 
boundary of $A'$ we let $U$ be the union of 
all grid triangles in $A'$ that touch one of these
segments (darker shading). 
Since $I'$ and $K'$ don't touch each other 
and there are only finitely many possible 
shapes for $U$, 
the modulus of the path family connecting them in $A'$ is 
uniformly bounded.
}
\end{figure}

The path family $\Gamma'$  need 
not be the image of $\Gamma$ if $f(A) \ne A'$.
However, since $f$ is conformal and $A' \subset f(A)$
we have, by the extension rule 
that $M(\Gamma) \leq M(\Gamma')$. 
Again, $M(\Gamma')$ is one of a finite number of 
positive possibilities, so $M(\Gamma)$ is 
bounded uniformly from above.

We claim that $M(\Gamma)$ is 
also bounded uniformly from below. Let $\sigma$ be the union of the three line segments 
$I'$, $J'$, $K'$ and let $\Omega = \complex \setminus \sigma$.
By the conformal invariance of modulus together with the extension rule, $M(\Gamma)$ is bounded 
below by the modulus of the path family connecting $I'$ to 
$K'$ in $\Omega$, because $f(A) \subset \Omega$.
Again, this modulus is one of a finite number of 
positive possibilities, so $M(\Gamma)$ is 
bounded uniformly from below.

The modulus of the path family  in $A$ connecting $J$ 
to the component of $\partial A$ not containing $J$
is bounded above by the analogous path family for 
$J'$ in $A'$. This is bounded above by the 
modulus of the path family connecting $J'$ to 
$\partial U \setminus  \gamma'$. 
There are only a finite number of possible 
configurations of $U$ and $\gamma'$, and each 
gives a finite modulus, so the maximum
of these values is also bounded above, 
independent of $A$.

Thus for each arc $J$ on one component of 
$\partial B$, the path family $\tilde{\Gamma}$
connecting $J$ to the other component of $\partial B$ is bounded 
uniformly above. We claim that this implies $\textrm{length}(J)$ is $O(\delta)$  as $\delta\rightarrow0$. Indeed, the metric $\rho$ defined by setting $\rho(z)=1/\delta$ for $z\in B$ satisfying
\[ \textrm{arg}(z)\in\left(\min_{\zeta\in J}\arg(\zeta)-\delta, \max_{\zeta\in J}\arg(\zeta)+\delta\right)  \]
and $\rho(z)=0$ otherwise is admissible for $\tilde{\Gamma}$, and hence a calculation of $\int\rho^2dxdy$ together with the definition of modulus shows that $\textrm{length}(J)/\delta = O(\textrm{Mod}(\tilde{\Gamma}))$.

Similarly, the path family $\Gamma''$ in $B$ connecting arcs
$I,K$ that are both adjacent to an arc $J$ has modulus 
bounded uniformly above and below.  Recall that we have proven $\diameter(J)= O(\delta)$. Thus, if we suppose by way of contradiction that $\diameter(J)\not=O(\diameter(I))$ as $\delta\rightarrow0$, we would deduce that $M(\Gamma'')$ degenerates, a contradiction. We conclude that $\diameter(J) = O(\diameter(I))$ as $\delta\rightarrow0$. Since the roles 
of $I$ and $J$ may  be exchanged we deduce that
the two arcs have comparable lengths.
\end{proof} 

\begin{lem} \label{thickness}
%{\color{red} I changed the wording of this Lemma slightly -$>$}
For every $\varepsilon>0$, there is an $N\in\mathbb{N}$ so that if $A$
is a conformal grid annulus with $A_o$, $A_i$ each having 
thickness at least $N$, then in the conclusion of Lemma 
\ref{comparable arcs} each subarc on $\partial B$ has 
length at most  $\varepsilon \cdot \delta$. 
\end{lem}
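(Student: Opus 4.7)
The plan is the following. By the symmetry between the hypotheses on $A_o$ and $A_i$, it suffices to bound the length of an arbitrary subarc $J$ on $\partial_o B$. Write $J = g(e)$ where $g := f \circ f_o^{-1} : f_o(A) \to B$ and $e$ is the corresponding grid edge on $\partial_o A_o$, and let $z_0$ be the midpoint of $e$. From the Cauchy--Schwarz argument at the end of the proof of Lemma \ref{comparable arcs}, one has
\[ \text{length}(J) \leq C\delta \cdot \textrm{Mod}(\tilde\Gamma), \]
where $\tilde\Gamma$ is the family of curves in $B$ connecting $J$ to $\partial_i B$ and $C$ is a universal constant. The goal thus reduces to showing that $\textrm{Mod}(\tilde\Gamma)$ can be made arbitrarily small by enlarging $N$.

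The second step is to transfer the modulus from $B$ back to $A_o$. Under the conformal map $g^{-1}: B \to f_o(A)$, the family $\tilde\Gamma$ corresponds to the family in $f_o(A)$ joining $e$ to $\partial_i f_o(A)$. Since $A_o \subset f_o(A)$ share their outer boundary, and the inner boundary $\partial_i A_o$ separates $e$ from $\partial_i f_o(A)$ inside $f_o(A)$, every such curve contains a sub-curve in $A_o$ from $e$ to $\partial_i A_o$. The extension rule then gives
\[ \textrm{Mod}(\tilde\Gamma) \leq \textrm{Mod}(\Gamma_{A_o}), \]
where $\Gamma_{A_o}$ denotes the family of curves in $A_o$ from $e$ to $\partial_i A_o$.

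The heart of the argument is to prove $\textrm{Mod}(\Gamma_{A_o}) = O(1/\log N)$ using only the fact, implied by the thickness hypothesis, that the Euclidean distance from $e$ to $\partial_i A_o$ inside $A_o$ is bounded below by a universal multiple of $N$. To this end I construct the admissible metric
\[ \rho(z) := \frac{c_N}{\max(|z - z_0|,\, 1)} \cdot \mathbf{1}_{\{|z - z_0| \leq RN\}}(z), \]
with $R$ a grid-dependent constant and $c_N$ of order $1/\log N$. Any $\gamma \in \Gamma_{A_o}$ parametrized by arc length has one endpoint within distance $1/2$ of $z_0$ and the other at distance at least $RN$ from $z_0$; since $|z - z_0|$ is $1$-Lipschitz along $\gamma$, the total variation of $\log \max(|z - z_0|,1)$ along $\gamma$ is at least $\log(RN) - O(1)$, giving $\int_\gamma \rho\,ds \gtrsim c_N \log N \sim 1$ and hence admissibility. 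A polar-coordinate computation then yields $\int \rho^2\, dA = O(c_N^2 \log N) = O(1/\log N)$, proving the bound. Combining with the reductions above, $\text{length}(J) = O(\delta/\log N)$, and choosing $N$ with $\log N \geq C'/\varepsilon$ for a suitable universal $C'$ gives $\text{length}(J) \leq \varepsilon \delta$.

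The main obstacle is geometric: the argument must hold uniformly across all shapes of $A$, including thin grid annuli in which the Euclidean ball of radius $RN$ around $z_0$ wraps around the hole and meets $A_o$ in a topologically complicated set. The Euclidean-distance-based metric $\rho$ is designed to handle this uniformly, because both the admissibility (driven by the total variation of $\log|z - z_0|$ along $\gamma$) and the $L^2$-bound (driven by the Euclidean area of a disk, not of $A_o$) are oblivious to how $A_o$ is arranged in the plane. The argument for subarcs on $\partial_i B$ is identical with $(A_o, f_o, \partial_o)$ replaced by $(A_i, f_i, \partial_i)$.
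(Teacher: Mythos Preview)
Your proposal is correct and follows essentially the same approach as the paper's proof. The paper's argument is extremely terse (three sentences): it observes that every curve connecting $J'$ to the opposite boundary must exit a disk of radius $\simeq N\cdot\diameter(J')$ centered on $J'$, then invokes the extension rule together with the known modulus $2\pi/\log R$ for the round annulus to conclude the path-family modulus tends to zero as $N\to\infty$, which by the end of Lemma~\ref{comparable arcs} forces $\text{length}(J)=o(\delta)$. Your argument has the identical skeleton---pull the modulus back to $A_o$ via conformal invariance and the extension rule, then exploit that curves must cross an annulus of conformal ratio $\simeq N$---but you write out the admissible metric $\rho\simeq c_N/\max(|z-z_0|,1)$ and the $L^2$ computation explicitly rather than quoting the round-annulus modulus as a black box.
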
 

\begin{proof}
In this case, the path family connecting $J'$ to the opposite
boundary component must connect points in $J'$ to points outside 
a disk of radius $\simeq N \cdot \diameter(J')$ centered on $J'$. The 
extension rule and the modulus calculation for annuli 
then imply this 
path family has modulus tending to zero as $N$ increases to 
infinity. This implies the arc has small length
compared to the width  of $B$. 
\end{proof} 

For a rectifiable arc $\gamma$, we let $\ell(\gamma)$
denote the (Euclidean) length of $\gamma$. 
A homeomorphism $f: \gamma \to \sigma$ between 
rectifiable curves is said to multiply lengths 
if for any subarc $\gamma' \subset \gamma$ we 
have $\ell(f(\gamma')) = \ell(\gamma') \cdot 
\ell(\sigma)/\ell(\gamma)$.  

A rectifiable curve $\gamma$ is called an $M$-chord-arc if
for any two points $x,y \in \gamma$ the shortest sub-arc of $\gamma$ connecting 
$x$ and $y$ has length at most $M|x-y|$. A  map  $f$
is $L$-biLipschitz if 
$$ \frac 1L \leq \frac {|f(x)-f(y)|}{|x-y|} \leq L ,$$
for all $x,y$ in its domain, $x\not=y$. Bi-Lipschitz maps between
planar domains are automatically quasiconformal
%{\color{red} $<$-Should this be quasisymmetric?}
with dilatation at most $K = L^2$. 
A closed curve is chord-arc if and only if it is the 
bi-Lipschitz image of a circle. A length multiplying 
map between two $M$-chord-arc curves is necessarily 
$M$-bi-Lipschitz, and moreover, an $L$-biLipschitz map 
between $M$-chord-arc curves has a
$K$-biLipschitz extension between the interiors, 
where $K$ only depends on $L$ and $M$. See e.g., 
\cite{MR639966} by Tukia or \cite{MR1370347} by MacManus. In the following proof, we use the notation $D(z,r)$ for the open (Euclidean) ball of radius $r>0$ centered at $z\in\mathbb{C}$.

\begin{lem} \label{length preserving}
In Lemma \ref{comparable arcs}, if $A$ is a conformal 
grid annulus and each boundary triangle $T$  of $A$ 
is an $L$-biLipschitz image of a 
Euclidean equilateral triangle, then 
there is a $K$-quasiconformal map $\psi: A \to B= \{ z: 1< |z| < 1+\delta\}$ so that for $f$ as in Lemma \ref{comparable arcs}, we have:
\begin{enumerate} 
\item  $\psi$ equals $f$ on $A$ minus the boundary triangles 
of $A$, 
\item $\psi$ equals $f$ on the boundary vertices of $A$, 
\item $\psi$ multiplies arclength on each boundary 
arc of $A$.
\item $K$ depends only on the biLipschitz constant $L$.
\end{enumerate} 
\end{lem}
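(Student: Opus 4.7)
The plan is to set $\psi := f$ off the union of boundary triangles, and modify $f$ inside each boundary triangle $T$ of $A$ independently. On a single boundary triangle $T$, prescribe $\psi$ on $\partial T$ by: $\psi := f$ on each edge of $T$ not lying in $\partial A$, and $\psi :=$ the arclength-multiplying homeomorphism onto $f(e) \subset \partial B$ on each edge $e$ of $T$ lying in $\partial A$. These specifications agree at the vertices of $T$ on $\partial A$, so $\psi|_{\partial T}$ is a continuous homeomorphism $\partial T \to \partial f(T)$; moreover two boundary triangles sharing an inner edge see the same boundary values of $\psi$ there (namely $f$), so the independent extensions will glue continuously. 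The extension from $\partial T$ to $T$ will come from the Tukia--MacManus biLipschitz extension theorem, provided $\partial T$ and $\partial f(T)$ are chord-arc and $\psi|_{\partial T}$ is biLipschitz, all with constants depending only on $L$.

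The decisive step is controlling the oscillation of $|f'|$ on $T$, for which I would appeal to the Koebe distortion theorem. Writing $T = f_o^{-1}(T_o)$ with $T_o$ a Euclidean equilateral triangle in $A_o$, set $g := f \circ f_o^{-1}$; then $g$ is conformal on $f_o(A) \supset A_o$, and, because $\partial_o B$ and $\partial_i B$ are circles, $g$ extends by Schwarz reflection across the straight edge of $T_o$ on $\partial_o A_o$ (and symmetrically across $\partial_i A_o$). Combined with the conformality of $g$ on the equilateral triangles of $A_o$ adjacent to $T_o$ across interior edges, and iterated reflection near the boundary vertices of $T_o$ as needed, this produces a conformal extension of $g$ to a domain containing a Euclidean disk of radius $\gtrsim \diameter(T_o)$ about each point of $T_o$. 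Koebe distortion then ensures $|g'|$ varies by an absolute factor on $T_o$; so $g|_{T_o}$ is, up to the scale $|g'(z_0)|$, absolutely biLipschitz, and composing with the $L$-biLipschitz $f_o$ shows $f|_T$ is biLipschitz with constant depending only on $L$.

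This immediately yields the chord-arc property of $\partial f(T)$ (the inner arcs are biLipschitz images of straight segments, and the outer arc is a circular arc), and gives the biLipschitz estimate for $\psi|_{\partial T}$: the length-multiplying map on $e$ is biLipschitz between chord-arc curves, $\psi = f$ on inner edges is biLipschitz by the above, and the speeds on the two sides at each boundary vertex $v \in \partial A \cap T$ match within a bounded factor, because the length-multiplying speed $\ell(f(e))/\ell(e) = \ell(e)^{-1}\int_e |f'|\,ds$ is comparable to $|f'(v)|$ by Koebe, which is also the speed of $f$ on the inner edge at $v$. The Tukia--MacManus theorem then furnishes a biLipschitz extension of $\psi|_{\partial T}$ to $T$ with constants depending only on $L$; such a map is automatically $K(L)$-quasiconformal. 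Assembling these extensions over all boundary triangles and combining with $\psi = f$ elsewhere in $A$ produces the required map. The main obstacle is precisely the Koebe-distortion step: verifying that the conformal extension of $g$ contains a definite Euclidean neighborhood of each point of $T_o$, which is where Schwarz reflection across both components of $\partial A_o$ (both mapped to circles by $g$) and the grid-annulus structure enter essentially; once this is secured, the remaining arguments are routine.
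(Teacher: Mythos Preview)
Your overall strategy matches the paper's: keep $\psi = f$ off the boundary triangles, prescribe $\psi$ on $\partial T$ for each boundary triangle $T$, and fill in via a Tukia--MacManus biLipschitz extension. The gap is exactly where you flag it. The claim that $g = f \circ f_o^{-1}$ extends conformally to a full Euclidean disk about every point of $T_o$ is false at the boundary vertices. If $m$ grid triangles of $A_o$ meet at a boundary vertex $v_o$ (so the interior angle of $A_o$ there is $m\pi/3$), then near $v_o$ one has $g(z) - g(v_o) \sim c(z-v_o)^{3/m}$; for $m \in \{2,4,5\}$ this is a genuine branch point and admits no conformal extension across $v_o$. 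Iterated Schwarz reflection does not resolve this: the domain reflections across the two boundary edges at $v_o$ compose to a rotation by $2m\pi/3$, while the corresponding target reflections across the single circle $\partial_o B$ compose to the identity, so the putative extensions are inconsistent. Consequently $|g'|$ is not bounded above and below on $T_o$ (it vanishes for $m<3$ and blows up for $m>3$), $f$ is not biLipschitz on inner edges of $T$ that meet $\partial A$, and your boundary data $\psi|_{\partial T}$ fails to be biLipschitz.

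The paper sidesteps this by a small but essential change in the boundary prescription: it makes $\psi_T$ length-multiplying not only on the edges of $T$ lying in $\partial A$ but also on every edge shared with another boundary triangle, reserving $\psi_T = f$ only for edges shared with a \emph{non}-boundary triangle. An edge of the latter type cannot touch $\partial A$ (its non-boundary neighbor is disjoint from $\partial A_o$), so $f$ is conformal on a genuine neighborhood and ordinary Koebe distortion applies there. At a boundary vertex all adjacent pieces of $\psi_T$ are then length-multiplying, and one only needs that $\partial f(T)$ is uniformly chord-arc with edges of comparable length. The paper obtains this not by extending $g$ itself but by the power-map substitution $g_T((z-v_o)^\alpha)$ with $\alpha = 3/m$, which \emph{does} extend conformally to $D(v_o,\tfrac12)$, and then invokes the distortion theorem for that extension. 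With this modification the rest of your argument goes through unchanged.
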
 

\begin{proof}
It is enough to consider the boundary corresponding 
to $A_o$; the argument for the inner boundary is the same. 

Let $f:A \to B$ be the conformal map of the conformal grid 
annulus $A$ to the round annulus $B$ given in Lemma 
\ref{comparable arcs}.
Consider a  boundary triangle $T'$ of the equilateral grid 
annulus $A_o$  and the corresponding boundary 
triangle $T = f_o^{-1}(T')$ of $A$. Then $g_T= f \circ f_o^{-1}$
is a conformal map of $T'$ into $B$. 
Recall that the  boundary of $A_o$ is a  grid polygon, so 
it has fixed 
side lengths (which we may assume are all unit length) 
and  every angle is in $\{\frac \pi 6, 
\frac \pi 3, \dots \frac {5\pi}6 \}$.  Thus at
each vertex $v$ of $\partial_o A_o$, the 
Schwarz reflection principle implies there is 
an $\alpha \in \{3, \frac 32, 1, \frac 34, \frac 35\}$ 
so that mapping $g_T((z-v)^\alpha))$ has a conformal 
extension to $D(v, \frac 12)$. 
This, together with the distortion theorem for conformal maps 
(e.g., Theorem I.4.5 of \cite{MR2150803}) 
implies that  each edge of $f(T)= g_T(T')$ 
is an analytic arc with uniform bounds, meeting the other
two at angles bounded uniformly away from zero 
(at interior verticies 
all angles are $\pi/6$ and at boundary vertices the 
angles are $\pi/k$ where $k$ vertices meet, and 
at most 5 triangles can meet a boundary vertex  
of a equilateral grid polygon). 
%{\color{red} $<$-Should this be $\pi/6$?}). 
Thus the image topological triangle $f(T)$
is a chord-arc curve
with uniform bounds.
Define a map $\psi_T$  on the boundary 
of $T$ by making $\psi_T$ length multiplying on any 
edge lying on $\partial A$ and on  any edge in common with 
another boundary triangle, and let  $\psi_T = f$ on any 
other edges (necessarily an edge shared with a 
non-boundary triangle). This is a bi-Lipschitz 
map from $\partial T$ to $f(\partial T)$ between 
chord-arc curves and hence it has a bi-Lipschitz extension 
(which is also a quasiconformal extension)
between  the interiors with uniform bounds.
So if we replace $f$ in each boundary triangle $T$ by 
the map $\psi_T$, we get a  quasiconformal map $\psi:A \to B$ that 
satisfies all the desired properties. 
\end{proof} 

\begin{lem} \label{boundary triangles} 
Suppose $\Gamma$ is a equilateral grid polygon 
bounding a region $\Omega$ and $\gamma \subset 
\Omega$ is a equilateral grid polygon (on the same 
grid as $\Gamma$) so that the annulus between 
$\gamma$ and $\Gamma$ has thickness $N\geq10$. 
Let $\Omega' \subset \Omega$ be the region bounded 
by $\gamma$. 
Suppose $f$ is conformal on $\Omega$. Then there is 
$K$-quasiconformal map  $g$ on $\Omega'$ so that 
\begin{enumerate}
	\item $g=f$ off the triangles touching $\gamma$, 
	\item $g=f$ on the vertices of $\gamma$, 
	\item $g$ is length multiplying on the edges of $\gamma$.
	\item $K$ is absolute, and $K\rightarrow1$ as $N\rightarrow\infty$.
\end{enumerate}
\end{lem}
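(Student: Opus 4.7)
The plan is to adapt the triangle-by-triangle construction from Lemma \ref{length preserving} to the present one-sided setting, and then sharpen the dilatation estimate by using the thickness hypothesis to show that $f$ is uniformly close to a similarity on each boundary triangle. The structural steps are completely parallel to Lemma \ref{length preserving}; the one new ingredient is the quantitative dependence $K \to 1$ as $N \to \infty$.

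First, I would fix a boundary triangle $T \subset \Omega'$ touching $\gamma$. The thickness hypothesis implies that $T$ is contained in a disk $D \subset \Omega$ of radius comparable to $N \cdot \diameter(T)$. Since $f$ is conformal on $\Omega \supset D$, Koebe's distortion theorem applied on this disk yields $|f'(z)/f'(w) - 1| = O(1/N)$ for all $z, w \in T$, and consequently $f(T)$ is a $(1 + O(1/N))$-biLipschitz image of a Euclidean equilateral triangle, whose boundary is therefore chord-arc with constant $1 + O(1/N)$.

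Next, I would define $\psi_T \colon \partial T \to f(\partial T)$ by setting $\psi_T = f$ on any edge of $T$ not contained in $\gamma$, and by declaring $\psi_T$ to be length-multiplying onto the corresponding arc of $f(\gamma)$ on each edge of $T$ lying in $\gamma$. Any edge shared between two triangles in $\Omega'$ is automatically not a $\gamma$-edge (since a $\gamma$-edge has its other side in the annulus, outside $\Omega'$), so the definition is consistent across adjacent boundary triangles; also $\psi_T$ agrees with $f$ at every vertex of $T$, giving a well-defined continuous map on $\partial T$. The biLipschitz constant of $\psi_T$ is $1 + O(1/N)$, because on each $\gamma$-edge the map $f^{-1} \circ \psi_T$ is a length-multiplying self-map of a nearly straight analytic arc, which differs from the identity by $O(1/N)$ in the biLipschitz sense.

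Finally, I would invoke the Tukia--MacManus extension result (as cited above) to extend each $\psi_T$ to a biLipschitz, hence quasiconformal, map $T \to f(T)$ with dilatation $K_T$ depending only on the biLipschitz constant of $\psi_T$ and the chord-arc constant of $\partial f(T)$, both of which are $1 + O(1/N)$. Setting $g = f$ outside the boundary triangles of $\gamma$ and $g = \psi_T$-extension on each such $T$ produces a globally defined $K$-quasiconformal map on $\Omega'$ satisfying properties (1)--(3), and the estimates above give $K \to 1$ as $N \to \infty$, establishing (4); the hypothesis $N \geq 10$ merely puts us in a regime where Koebe distortion gives meaningful control. The main substantive step is the Koebe estimate producing the $1 + O(1/N)$ constants; the rest is the same bookkeeping as in Lemma \ref{length preserving}.
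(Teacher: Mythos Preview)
Your proof is correct and follows essentially the same approach as the paper's: define $g$ triangle-by-triangle on the boundary triangles of $\gamma$ via a biLipschitz boundary prescription (length-multiplying on $\gamma$-edges, equal to $f$ elsewhere) and extend to the interior, then observe that large thickness forces $f$ to be nearly affine on each boundary triangle so the constants tend to $1$. Your version is slightly more explicit about the Koebe $O(1/N)$ estimate, and you make the simpler (but equally valid) choice of keeping $\psi_T=f$ on edges shared between two boundary triangles rather than making those length-multiplying as the paper does.
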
 

\begin{proof}
For each boundary triangle $T$ of $\gamma$,  $f$ is 
conformal on a disk centered at the center of  $T$
with radius $\geq 4\cdot \diameter (T)$. Therefore the 
image $T' = f(T)$ consists of analytic arcs meeting 
at $60^\circ$. Thus for any subset of the three edges
of $T$ we can define a biLipschitz map 
$g : T \to T'$ that agrees with $f$ on this subset of 
edges, also agrees with $f$ at all three vertices, and 
is length multiplying on the remaining edges.
As above, this is a biLipschitz map 
between chord-arc curves so it has a 
biLipschitz (and hence quasiconformal) 
extension between the interiors, with constants 
that are uniformly bounded, say by $K$. 
 On any non-boundary triangle in $\Omega'$ we 
set $g =f$. For each boundary triangle  we take $g$
as above that is length multiplying 
on the edges of $T$ on $\gamma$ or shared with another 
boundary triangle, and  so that $g=f$  on edges of $T$ 
that are shared with a non-boundary triangle. 

If the thickness is very large, then $f(T)$ is 
close to an equilateral triangle, and it is clear
that the maps defined above can be taken close 
to isometries, in other words, the quasiconformal dilatation 
is close to $1$.
\end{proof}

%-------------------------------------
\section{Triangulating Annuli} 
\label{Triangulating_strips}

In this Section, we triangulate the conformal grid annuli introduced in Section \ref{conformal_grid_annuli}. We do this by pulling back a triangulation of a conformally equivalent annulus by a certain quasiconformal mapping. We begin with a discussion of decomposition of domains into dyadic squares.

A dyadic interval $I\subset \reals$ is one of the 
form  $I = [j 2^{-n}, (j+1) 2^{-n}]$ 
for some integers $j,n$. 
A dyadic square in the plane is  a product 
of dyadic intervals of equal length, in other words, 
$Q=[j 2^{-n}, (j+1) 2^{-n}] 
 \times [k 2^{-n}, (k+1) 2^{-n}] $
for some integers $j,k,n$. We let 
$\ell(Q) = 2^{-n} = \diameter(Q)/\sqrt{2}$ denote the side 
length of $Q$. Two dyadic squares 
either have disjoint interiors or one is 
contained in the other one. 
Given a domain $D$, we can therefore take the set of maximal 
dyadic squares ${\mathcal W}=\{Q_j\}$ so  that 
$3Q_j \subset D$. Then 
\begin{eqnarray}\label{bounds}
\ell(Q_j)  \leq\dist(Q_j, \partial D) 
\leq 3 \sqrt{2} \ell(Q_j) . 
\end{eqnarray} 
This is an example of a Whitney decomposition of $D$. 
%i.e., collection of sets $\{X_j\}$ that covers $D$, 
%have disjoint interiors and satisfy 
%$$ \diam(X_j) \simeq \dist(X_j, \partial D).$$.
% and each $X_j$ contains a ball of comparable diameter.
Note that if $Q$ and $Q'$ are adjacent squares in 
the Whitney decomposition above, 
with $\ell(Q') < \ell(Q)$, then 
$$ 
	\ell(Q') \geq \frac 1{3 \sqrt{2}} \dist(Q', \partial D)
\geq \frac 1{3\sqrt{2}} [ \dist(Q, \partial D) - \sqrt{2}  \ell(Q')]  
$$
which implies $\ell(Q') \geq \frac {1}{4\sqrt{2}} \ell(Q) 
	> \frac 18 \ell(Q)$. 
Since the side lengths are dyadic, we must 
have $\ell(Q') \geq \frac 14 \ell(Q)$. 
Thus adjacent squares differ in size by at most a factor of $4$.

\begin{lem} \label{strip triangles} 
Suppose $S=\{ x+iy :  0 < y < 2 \}$ is an infinite strip and the 
top and bottom edges are partitioned into segments 
of (Euclidean) length $\leq 1/8$ and that adjacent edges have 
lengths comparable to within a factor of $M$. 
Then there is a locally finite triangulation of the strip using only the
given boundary vertices and so that every angle 
of every triangle is $\geq \theta >0$ where $\theta$
only depends on $M$.   Thus the
triangulation has ``bounded degree'' depending 
only on $M$, in other words, the
number of triangles meeting at any vertex is 
uniformly bounded above by $2\pi/\theta$.
If both partitions are $L$-periodic (under horizontal 
translations) for some $L \geq 1$, 
then the triangulation is also $L$-periodic.
\end{lem}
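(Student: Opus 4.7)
The key observation is that a triangulation using only the given boundary vertices (and no interior vertices) cannot satisfy a uniform angle bound: it would have to contain a triangle with one edge of length $\leq 1/8$ on one side of $S$ and opposite vertex on the other side at perpendicular distance $\geq 2$, yielding an angle of order $1/16$. Hence I read ``using only the given boundary vertices'' as meaning that no new vertices are introduced on $\partial S$, while interior Steiner vertices are permitted. My plan is to construct these interior vertices via a Whitney-style scheme whose scale near each boundary component matches the local segment length.

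First, to each boundary vertex $v$ I attach a scale $s(v)$ equal to the length of one of the boundary segments meeting $v$; by hypothesis $s$ varies by at most a factor $M$ between consecutive boundary vertices on the same side. For each pair of consecutive boundary vertices on one side, I install an apex vertex at perpendicular distance $\tfrac{\sqrt{3}}{2} s(v)$ from the boundary, chosen so that the triangle spanned by the segment and the apex is within a bounded distortion of equilateral (the distortion depending only on $M$). This forms a first ``collar'' of interior vertices running parallel to each component of $\partial S$.

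Next, I build a cascade of further collars on each side by dyadic doubling: collar $k$ lies at perpendicular distance $\simeq 2^k s(v)$ from the boundary with vertex spacing $\simeq 2^k s(v)$, obtained by pairing consecutive vertices of collar $k-1$. Since $s \leq 1/8$ and $S$ has height $2$, there is room for $O(\log(1/s))$ doublings, after which both cascades terminate at a common ``central collar'' of spacing $\simeq 1$ near the line $\{y=1\}$. I triangulate each region as follows: the boundary zone by the apex fans of the first collar; each dyadic transition annulus between collars $k$ and $k+1$ by the standard pattern that bridges two rows of vertices at spacing ratio $1:2$ with nearly equilateral triangles; the central zone by a uniform equilateral-type grid at scale $\simeq 1$.

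Because adjacent segment lengths are comparable within factor $M$, every triangle produced has sides comparable within a factor depending only on $M$; the law of cosines then gives a minimum angle $\theta = \theta(M) > 0$, and bounded degree $\leq 2\pi/\theta$ is immediate. Periodicity is automatic since each cascade depends only locally on the boundary partition and the central collar can be chosen $L$-periodic. The principal obstacle is the last sentence of the construction: the top and bottom cascades may begin at incommensurable scales at the same $x$-coordinate, so one cannot match their vertices directly. The dyadic Whitney philosophy circumvents this by running each cascade \emph{independently} and stitching them together only through the uniform central collar, decoupling the geometry on the two sides of $S$.
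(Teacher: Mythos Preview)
Your reading of ``using only the given boundary vertices''---no new boundary vertices, interior Steiner vertices allowed---is correct and matches the paper, and your high-level plan (a Whitney-type dyadic interior, the two sides decoupled via a unit-scale central row) is exactly the paper's: it too begins by bisecting $S$ along a unit partition and handling each half separately.

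The gap is in the bottom-up pairing cascade. Pairing segments whose adjacent ratio is at most $M$ produces super-segments whose adjacent ratio can be $M^2$; after $k$ rounds it can reach $M^{2^k}$. Concretely, let $s_j=M^{\,j-1}s_1$ with $s_1$ small enough that $s_j\le 1/8$ for $1\le j\le 2^{k+1}$: then the two leftmost level-$k$ blocks have length ratio exactly $M^{2^k}$. Since your collar-$k$ vertices sit at heights comparable to these block lengths, the collar-$k$ edge between them is nearly vertical and the $1{:}2$ bridging triangles degenerate, so no angle bound depending only on $M$ can hold. The paper avoids this by going top-down: it lays down a \emph{fixed} dyadic square grid in the half-strip (largest squares along the unit-partitioned side, halving toward the given boundary), in which adjacent squares differ in size by at most a bounded factor regardless of the boundary data. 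The data enters only through a small-slope polygonal curve $\sigma$ at height $\simeq D_k/M$ above each $x_k$; squares on or below $\sigma$ are discarded, leaving a staircase $\gamma$, and each $x_k$ is fanned to the $O(\log M)$ vertices of $\gamma$ over $[x_k,x_{k+1}]$, all at distance $\simeq D_k$ from $x_k$. Snapping your collars to this fixed dyadic grid (rather than letting their heights accumulate from the $s$-values) would repair your argument.
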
 

\begin{proof}
By splitting the strip into two parallel strips and rescaling, 
it suffices to consider the case when the top side is divided 
into unit segments (we triangular the top and bottom halves
separately and join them along a unit partition 
running down the center of the strip). 
The following argument is adapted from 
the proof of Theorem 3.4 in \cite{2021arXiv210316702B}.

If  $ \dots < x_{-1} < x_0 < x_1 < \dots$
are the partition points on the bottom edge define 
$$D_k = \min(|x_k-x_{k+1}|, |x_k-x_{k-1}|),$$ 
By assumption,
any two adjacent values of $D_k$ are comparable within 
a factor of $1\leq M < \infty$, and $\sup D_k \leq  1/8$.
Thus  $ 0< D_k/(16 M) \leq 1/128$ 
is contained in a dyadic interval of
the form $(2^{-j-1}, 2^{-j}]$
for some $j \geq 6$ (these half-open 
intervals form a disjoint cover of $(0, \infty)$).  
Let $y_k = \frac 34 \cdot 2^{-j}$ be the
center of this interval.  Note that $y_k$ and
$D_k/(16 M)$ are
comparable within a factor of $ \frac 32 < 2$,
so $y_k < D_k/(8 M) \leq \min(\frac 1{64}, D_k /8)$.

Let $z_k = x_k + i y_k$, $k \in \integers$
and consider the infinite polygonal arc  $\sigma$ with these vertices.
Note that $\sigma$ stays  within $1/64$ of the bottom edge
of the strip and every
segment has slope between $-1/8$ and $1/8$: the heights
of the endpoints above $x_k, x_{k+1}$  are each less than
	$$  \max(y_k, y_{k+1}) \leq
	\frac 18 \max(D_k , D_{k+1}) 
	\leq \frac 18 |x_k-x_{k+1}|,$$
so
$$ \frac {|y_{k+1}-y_k|} {|x_{k+1}-x_k|} \leq 
	 \frac {\max(y_{k+1},y_k)} {|x_{k+1}-x_k|} \leq  \frac 18.
$$

Tile the top half of $S$ by unit squares. Below this place 
a row of squares of side length $1/2$. Continue in this way, 
as illustrated in Figure \ref{Strip0}. We call this our 
decomposition of $S$ into dyadic squares. (This corresponds 
to the restriction of a Whitney decomposition of 
a half-plane to the strip.)

\begin{figure}[!htbp]
\centerline{
  \includegraphics[height=1.5in]{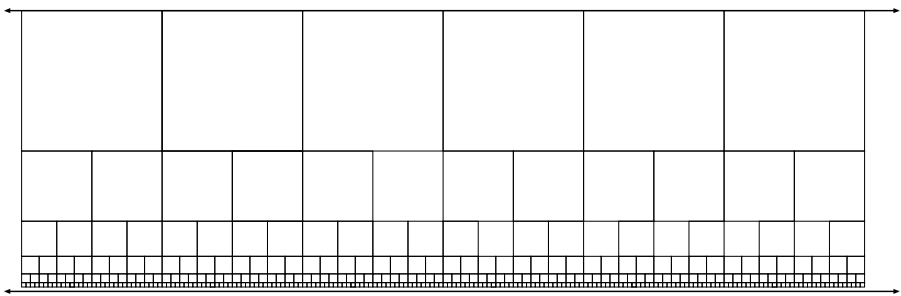}
}
\caption{ \label{Strip0}
The decomposition of $S$ into dyadic squares. 
}
\end{figure}

For each $k$, choose a square $Q_k$ from our
decomposition of the strip $S$ that contains $z_k$.
There is at least one decomposition  square containing 
$z_k$ since these squares  cover $S$, and there are 
at most two, since by 
our  choice of $y_k$, $z_k$ cannot lie on the top or bottom edge 
of any such $Q_k$ ($y_k$ was chosen to be halfway between 
these heights).  See Figure \ref{Qk}.
Let $I_k$ denote the vertical projection of $Q_k$ onto the 
bottom edge of $S$. 
Since the segments of $\sigma$ have  slope $\leq 1/8$, the height 
of $\sigma$ can change by at most $ \ell(Q_k)/8$ over  
$I_k$ and since it contains a point $z_k$ that is distance 
$\ell(Q_k)/2$ from both the top  and  bottom edges of 
$Q_k$,  $\sigma$ cannot intersect these edges of $Q_k$.  
Similarly, it cannot intersect the top or bottom edges of 
the adjacent dyadic squares of the same size as $Q_k$ 
that share the left and right edges of $Q_k$. In fact, it takes
at least horizontal distance $4 \ell(Q_k)$ for $\sigma$ to 
reach the height of the top or bottom of $Q_k$, so 
$\sigma$ does not intersect the top or bottom of the squares
that are up to three 
positions to the left or right on $Q_k$. This implies
that $\sigma$ does 
not intersect the ``parent'' square $Q_k^\uparrow$ of $Q_k$ 
(the square of twice
the size lying directly above $Q_k$), nor does it 
intersect the left or right neighbors of $Q_k^\uparrow$. 
 See Figure \ref{Qk}.
\begin{figure}[!htbp]
\centerline{
  \includegraphics[height=2.0in]{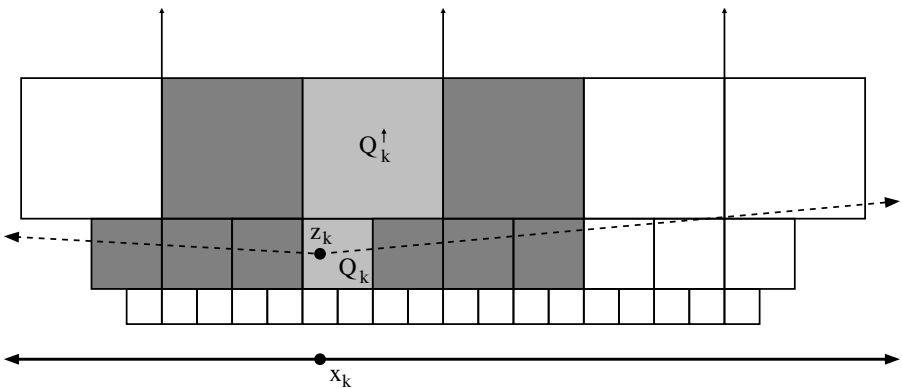}
}
\caption{ \label{Qk}
The point on the bottom edge is $x_k$, and 
above it is the corresponding $z_k$. The point 
$z_k$ is contained in a square $Q_k$ and above 
this is its ``parent'' $Q_k^\uparrow$ (both 
lightly shaded).
The dashed  curve is part of $\sigma$. Note that $\sigma$ 
intersects at least three squares to the 
left and right of $Q_k$ (darker shading). 
This implies the  ``parent''  square
$Q_k^\uparrow$ does not intersect $\sigma$, nor do the squares 
	to the left or right of the parent (also dark shaded). 
}
\end{figure}

Now remove all the squares whose interiors  intersect $\sigma$ 
or that lie below $\sigma$.
The set of  remaining squares  
contains the whole top row of unit squares.   Since 
$\sigma$ has small slope, if a square $Q$ is above $\sigma$, so is its parent (and by induction,  all its ancestors).
Let $\gamma$ denote the lower boundary of union 
See the top of  Figure \ref{Strip3}.
of remaining squares; this is a locally polygonal 
curve made up of horizontal and vertical segments. 
A vertex of $\gamma$ is any corner of a decomposition 
square that lies on $\gamma$, and a corner of $\gamma$
is a vertex where a horizontal and vertical edge of 
$\gamma$ meet.
Let $W$ denote the infinite region bounded above 
by $\gamma$ and below by the bottom edge of $S$
(shaded region in top picture of Figure \ref{Strip3}).

\begin{figure}[!htbp]

\centerline{
  \includegraphics[height=1.35in]{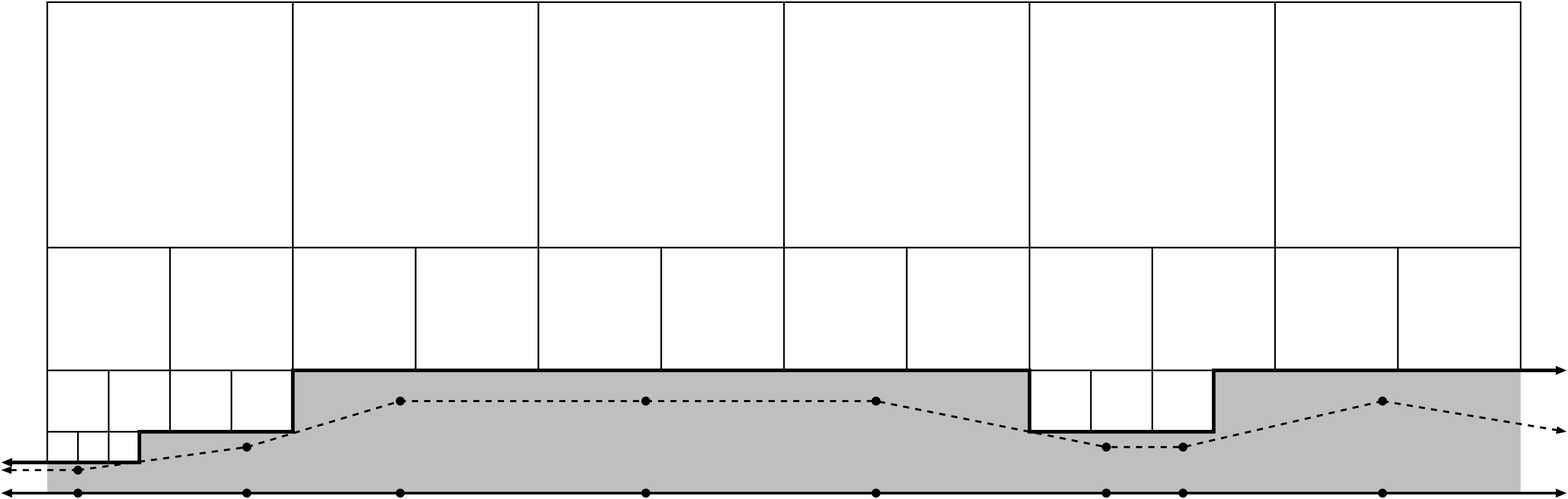}
}
\vskip.2in
\centerline{
  \includegraphics[height=1.35in]{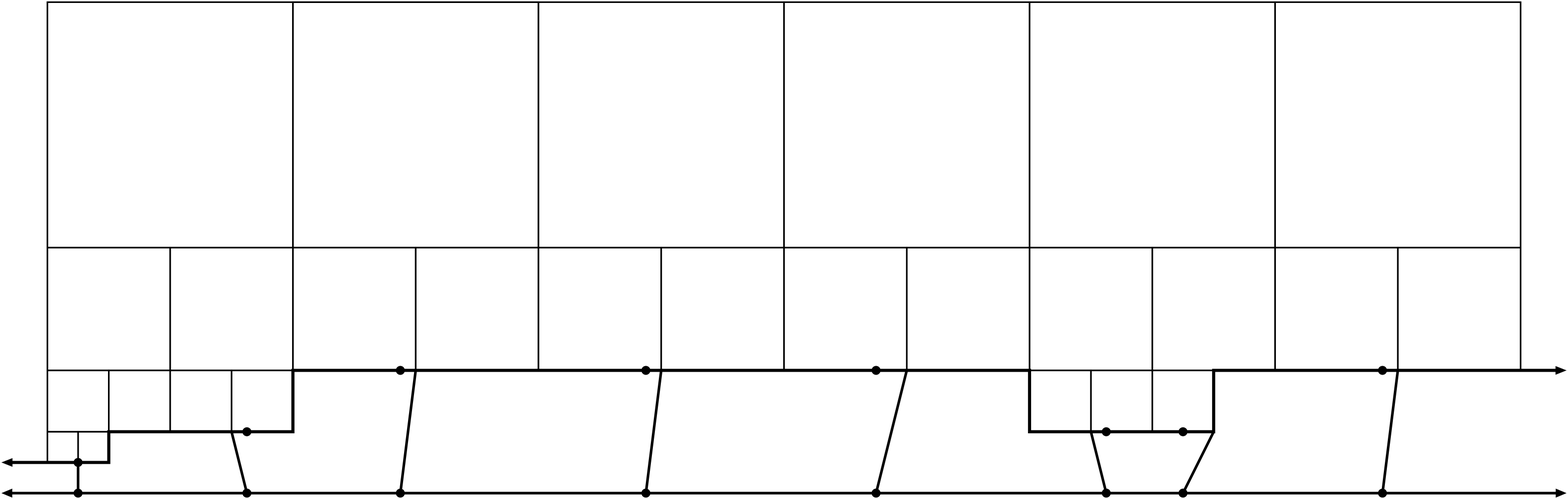}
}
\vskip.2in
\centerline{
  \includegraphics[height=1.35in]{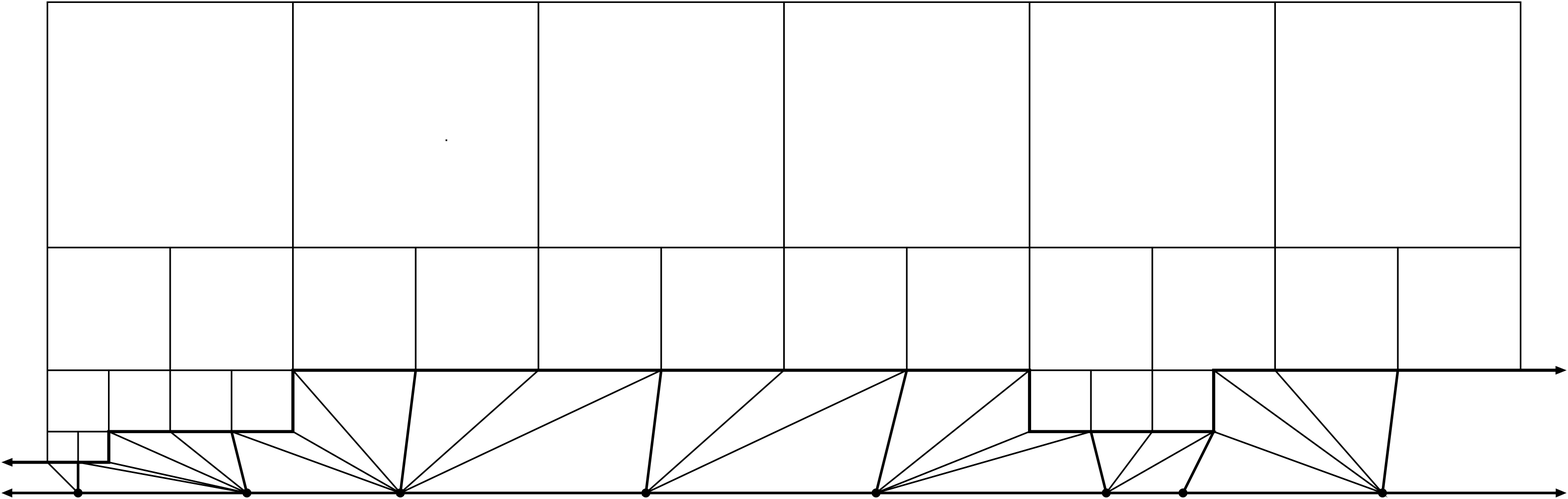}
}
\vskip.2in
\centerline{
  \includegraphics[height=1.35in]{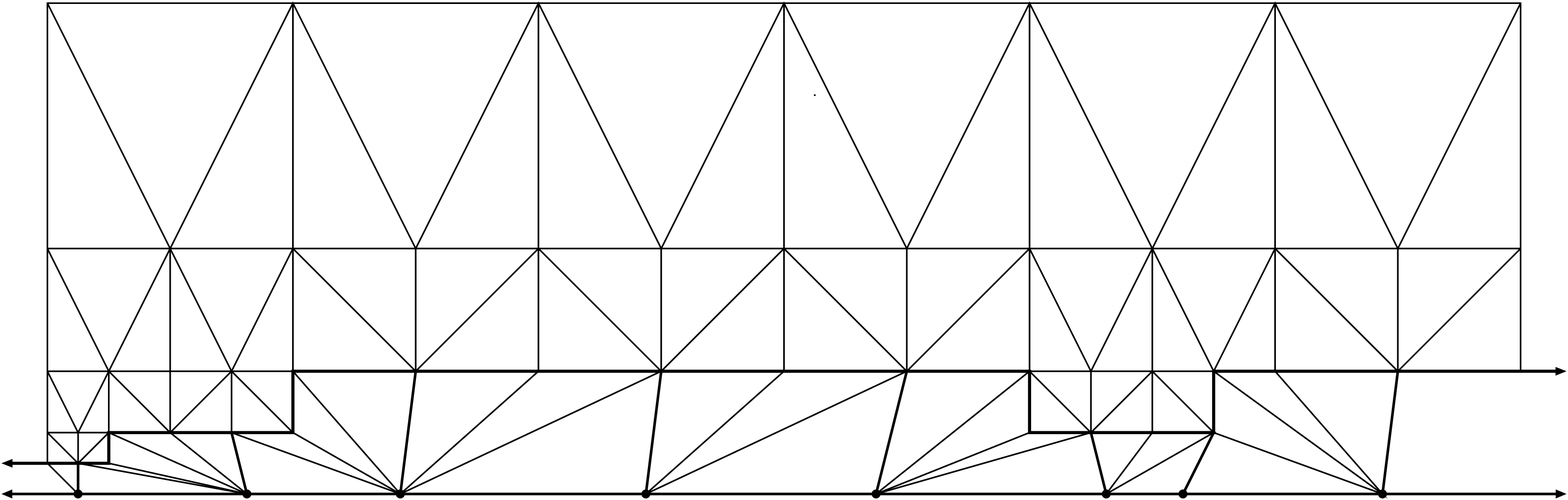}
}
\caption{ \label{Strip3}
The top figure shows the region  $W$ (shaded) below 
$\gamma$. The second figure  divides $W$ into 
quadrilaterals by connecting each $x_k$ to a vertex
of $\gamma$ that is closest to being ``above'' $x_k$. 
We then  triangulate the quadrilaterals by connecting all vertices 
of $\gamma$ to either the lower left or lower right corner, 
depending on whether $\gamma$ is  decreasing or increasing  
between $x_k$ and $x_{k+1}$. 
The bottom picture shows  the 
squares above $\gamma$ triangulated in the obvious way. 
}
\end{figure}

Let $\gamma_k$ be the subarc
of $\gamma$ that projects onto $[x_k, x_{k+1}]$.
By construction, each $x_k$ lies below the parent
of $Q_k$, and the squares to the left and right of 
the parent are also above $\sigma$, so $x_k$ is 
at least distance $2 \ell(Q_k)$ from the vertical 
projection of any corner of $\gamma$. 
Connect $x_k$ to a vertex  $w_k$ of $\gamma$ whose vertical 
projection is closest to $x_k$, or  to either one in case 
of a tie. Note that $w_k$ is a vertex  on the bottom 
edge of $Q_k^\uparrow$; a tie occurs only if $w_k$ is the 
midpoint of this bottom edge. Adding the segments from 
$x_k$ to $w_k$ divides $W$ into quadrilaterals. See 
the second figure in Figure \ref{Strip3}.

Over the interval $(x_k, x_{k+1})$, the polygonal curve 
$\gamma$ is either a horizontal segment, 
a decreasing stair-step or an increasing 
stair-step. In the first two cases, connect  every vertex 
of $\gamma$ between $w_k$ and $w_{k+1}$ (including these 
points) to $x_k$. In the third case, connect them all 
to $x_{k+1}$. In either case, this triangulates $W$ 
with triangles so that all three edges have comparable 
lengths  and no angle is close to $180^\circ$, so by 
the Law of Sines, all the angles are bounded uniformly 
away from $0$ (the bound depends on $M$,  the constant of 
comparability between adjacent arcs on the boundary of $S$).
\end{proof}

The following simple lemma will allow us to 
build equilateral triangulations from topological 
triangulations that are ``close to'' equilateral 
in a precise sense.

\begin{lem} \label{equilateral2}
Suppose $K < \infty$ and  ${\mathcal T}$ is a topological 
triangulation of a domain $\Omega$ and for each
triangle $T \in {\mathcal T}$, there is a 
$K$-quasiconformal map $f_T$  sending $T$ to a Euclidean 
equilateral triangle and that is length multiplying on 
each boundary edge. Let $\mu_T$ be the dilatation of $f_T$.
If $f$ is a quasiconformal map 
on $\Omega$ with dilatation  $\mu_T$ on $T$, 
then $f({\mathcal T})$ is an equilateral triangulation 
of $f(\Omega)$. 
\end{lem}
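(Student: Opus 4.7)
My plan is to reduce the equilateral condition on $f(\mathcal{T})$ to two ingredients: (i) the uniqueness part of the measurable Riemann mapping theorem, which identifies $f$ on each triangle $T$ with $f_T$ up to post-composition with a conformal map, and (ii) the length-multiplying boundary data, which forces the pieces on adjacent triangles to be compatible along the shared edge.

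First, for each $T \in \mathcal{T}$, both $f|_T$ and $f_T$ are quasiconformal with the common Beltrami coefficient $\mu_T$, so the composition $g_T := f \circ f_T^{-1} : f_T(T) \to f(T)$ has zero Beltrami coefficient a.e.\ on the Euclidean equilateral triangle $f_T(T)$. By Weyl's lemma $g_T$ is conformal on the interior of $f_T(T)$ and extends to a homeomorphism of the closed triangles.

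Next, consider two adjacent triangles $T, T' \in \mathcal{T}$ sharing an edge $e$ with endpoints $v_1, v_2$, and with opposite vertices $v_3 \in T$, $v_3' \in T'$. Write $T_0 := f_T(T)$ and $T_0' := f_{T'}(T')$. I would let $\tau : T_0 \to T_0'$ be the unique anti-conformal similarity of equilateral triangles sending $f_T(v_i) \mapsto f_{T'}(v_i)$ for $i = 1, 2$ and $f_T(v_3) \mapsto f_{T'}(v_3')$; with orientation-preserving conventions on $f$ and the $f_T$, this vertex pairing is indeed orientation-reversing (since $T$ and $T'$ lie on opposite sides of $e$), so such a $\tau$ exists and is anti-conformal.

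The key step is to check $\tau \circ f_T = f_{T'}$ pointwise on $e$. Both maps send $e$ onto $f_{T'}(e)$ with matching endpoint correspondence $v_i \mapsto f_{T'}(v_i)$; by hypothesis $f_{T'}|_e$ is length-multiplying, and $\tau \circ f_T|_e$ is length-multiplying as the composition of the length-multiplying $f_T|_e$ with the affine anti-similarity $\tau|_{f_T(e)}$ (which scales length by a constant factor). Uniqueness of length-multiplying parametrizations with prescribed endpoint correspondence then forces equality on $e$. Setting $R := g_{T'} \circ \tau \circ g_T^{-1} : f(T) \to f(T')$, this is anti-conformal (conformal $\circ$ anti-conformal $\circ$ conformal), and for $z \in e$ one has
\[
R(f(z)) \;=\; g_{T'}\bigl(\tau(f_T(z))\bigr) \;=\; g_{T'}\bigl(f_{T'}(z)\bigr) \;=\; f(z),
\]
so $R$ fixes $f(e)$ pointwise and sends the vertex of $f(T)$ opposite $f(e)$ to the vertex of $f(T')$ opposite $f(e)$. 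Since $f$ is a homeomorphism, $f(\mathcal{T})$ is automatically a topological triangulation of $f(\Omega)$, and Definition \ref{equil_triang_def} is verified. The only delicate point is the pointwise boundary matching above: without the length-multiplying hypothesis one would only obtain agreement on $f(e)$ up to a nontrivial boundary reparametrization, which in general cannot be realized by a global anti-conformal similarity between the two equilateral triangles.
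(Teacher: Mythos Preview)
Your argument is correct and is essentially the same as the paper's proof: both construct the required anti-conformal reflection between adjacent image triangles by conjugating an anti-conformal similarity of Euclidean equilateral triangles through the conformal maps $f\circ f_T^{-1}$, and both use the length-multiplying hypothesis to verify pointwise agreement on the shared edge. The only cosmetic difference is that the paper writes the conformal identifications in the reverse direction (as $\iota_k\circ f_{T_k}\circ f^{-1}$) and uses a single reflection $R$ across the common image segment in place of your $\tau$.
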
 

\begin{proof}
We use the characterization of equilateral triangulations 
given in Lemma 2.5 of \cite{2021arXiv210316702B}: a triangulation 
of a Riemann surface is equilateral iff given any two 
triangles $T, T'$ that share an edge $e$, there is an 
anti-holomorphic homeomorphism $T\to T'$ that fixes 
$e$ pointwise, and maps the vertex $v$ opposite $e$ in $T$ 
to the vertex $v'$ opposite $e$ in $T'$. 

For any two  triangles  $T_1, T_2$
in $f({\mathcal T})$ that are adjacent along an edge $e$, 
define  $g = \iota_k \circ f_{T_{k}} \circ f^{-1}$ 
on $T_k$, $k=1,2$, where  $\iota$ is an appropriately 
chosen similarity of the plane to make the image 
triangles match up along the segment $I$ that is the image 
of $e$. By the length multiplying property of the 
maps $f_T$, $g$ is continuous across $e$. 
Then $g^{-1} \circ R \circ g$, where $R$ is 
reflection across $I$, is the anti-holomorphic maps that 
swaps $T_1$ and $T_2$ as required. 
\end{proof} 

The image triangulation ${\mathcal T}'$ will be close 
to ${\mathcal T}$ if the dilatation $\mu$
is close to zero in an appropriate sense. 
For our applications below, this will mean that the 
dilatation of $|\mu|$ is uniformly bounded below $1$ 
and that the support of $\mu$ has small area. As the
area tends to zero, $f$ can be taken to uniformly approximate 
the identity, and so ${\mathcal T}'$ approximates 
${\mathcal T}$ as closely as we wish.

The following is elementary and left to the reader.  See 
Figure \ref{Affine} for a hint. 

\begin{lem}  \label{affine lemma} 
Any Euclidean  triangle $T$  can be uniquely  mapped to 
a equilateral triangle $T'$ by an affine map by specifying 
a distinct vertex of $T'$ for each vertex of $T$.
This map is $K$-quasiconformal where $K$ 
depends only on  the minimal angle of $T$. 
\end{lem}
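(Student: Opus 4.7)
The plan is to break the lemma into two independent claims: (i) existence and uniqueness of the affine map, and (ii) the quantitative quasiconformality estimate. For (i), I would note that any real-affine self-map of the plane has the form
\[
f(z) = \alpha z + \beta \bar z + \gamma,
\]
which has six real degrees of freedom. Prescribing the images of the three vertices of $T$ yields six real linear equations in $(\alpha,\beta,\gamma)$, and the associated coefficient matrix is invertible precisely because the three vertices of $T$ are not collinear. Hence $f$ exists and is unique for each of the six possible vertex correspondences.

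For (ii), I would use that the dilatation of a real-affine map is unchanged by pre- or post-composition with a Euclidean similarity (similarities are conformal and thus have trivial complex dilatation). So after similarity normalizations on both sides I may assume that two vertices of $T$ sit at $0$ and $1$, the third vertex sits at some $\tau \in \mathbb{C}\setminus\mathbb{R}$, and the corresponding vertices of $T'$ are $0$, $1$, and $e^{i\pi/3}$ or $e^{-i\pi/3}$ (the sign chosen so that $f$ is orientation-preserving; the orientation-reversing cases are handled by first conjugating). Under this normalization $f(z)=az+b\bar z$ with $a,b$ given by the two linear equations $a+b=1$ and $a\tau + b\bar\tau = e^{\pm i\pi/3}$; solving,
\[
b = \frac{e^{\pm i\pi/3}-\tau}{\bar\tau - \tau},\qquad a = 1-b,
\]
so the complex dilatation $\mu(\tau)=b/a$ depends continuously on $\tau$.

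Finally, I would invoke a compactness argument. The set
\[
\Theta_{\theta_0} = \{\tau \in \mathbb{C}\setminus\mathbb{R} : \text{the triangle } \{0,1,\tau\} \text{ has all angles} \geq \theta_0\}
\]
is compact: the three angle constraints cut out a closed set, and the constraints on both $\arg\tau$ and $\arg(\tau-1)$ being in $[\theta_0,\pi-\theta_0]$ keep $\tau$ away from $\mathbb{R}$ and from $\infty$. On $\Theta_{\theta_0}$ the continuous function $|\mu(\tau)|$ is everywhere strictly less than $1$ (since the affine map is a genuine bijection onto a non-degenerate triangle), hence attains a maximum value $k=k(\theta_0)<1$. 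Setting $K = (1+k)/(1-k)$ gives the desired bound depending only on $\theta_0$.

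There is no real obstacle here; the only point requiring care is verifying that $\Theta_{\theta_0}$ is genuinely compact (bounded away from $\mathbb{R}$, bounded away from $\infty$, and closed), which the three angle conditions guarantee. The figure referenced in the statement presumably records this normalization explicitly.
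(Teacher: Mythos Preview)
Your proof is correct and follows essentially the same approach as the paper: normalize via similarities so that the common edge is $[0,1]$, write the affine map as $\alpha z+\beta\bar z$, and compute $\mu=\beta/\alpha$ explicitly. The paper in fact leaves the proof to the reader and only supplies the hint (in the figure caption) that $|\mu|=|(b-a)/(b-\bar a)|$, from which the bound is read off geometrically; your compactness argument on $\Theta_{\theta_0}$ is a perfectly good way to make that last step rigorous.
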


\begin{figure}[!htbp]
\centerline{
  \includegraphics[height=1.0in]{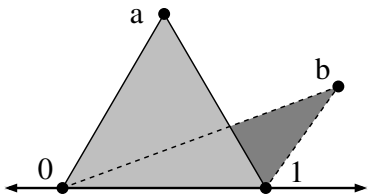}
}
\caption{ \label{Affine}
To compute the dilatation of affine maps between triangles, place 
both triangles with one edge $[0,1]$ that is fixed 
by the map, and opposite vertices $a,b$.   
The affine map has the form $z\to \alpha z + \beta \overline{z}$. 
Since $0,1$ are fixed, we can solve for 
$\alpha,\beta$  and this gives 
$|\mu| = |\beta/\alpha| = |(b -a)/(b-\overline{a})|$.
This is bounded below $1$ iff the angles of the 
triangle with vertices $0,1,b$ are bounded away from zero.
%The precise form of the answer is not needed in this paper.
}
\end{figure}

%Next we combine all the results of this section
%into a single statement. 

\begin{lem} \label{annuli triangulation}
There is a constant $C< \infty$ so that the 
following holds. 
Suppose $A$ is a conformal grid annulus, 
and $f: A \rightarrow B=\{ 1< |z| < 1+\delta\}$ is a conformal mapping, where 
$\delta \leq 1/100$.
Suppose also that $\textrm{length}(f(I))<\delta/10$ for each sub-arc $I$ of $A$.
Then $A$ has a topological triangulation 
such that each triangle $T$ in the 
triangulation can be  
mapped to a  equilateral triangle  by a 
$C$-quasiconformal map that multiplies arclength 
on each side of $T$, and the degree of any vertex is bounded by a universal constant (independent of $A$).
%Moreover each triangle $T$
%has diameter bounded by 
%$$ \diam (T) = O\left({\rm{inrad}}(A)+{\rm{gap}}(A) \right).$$
\end{lem}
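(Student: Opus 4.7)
The plan is to triangulate $A$ in three steps: (i) straighten $A$ to a round annulus $B$ using the quasiconformal map from Lemma \ref{length preserving}, (ii) triangulate $B$ by transferring a straight-edge triangulation from a flat strip via the $\log$ map, and (iii) for each resulting topological triangle $T \subset A$, build the required $C$-quasiconformal map $T \to E$ (with $E$ a fixed Euclidean equilateral triangle) as the composition of a quasiconformal ``straightening'' with the affine map of Lemma \ref{affine lemma}.

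For (i), I apply Lemma \ref{length preserving} to upgrade $f$ to a $K$-quasiconformal $\psi : A \to B$ which equals $f$ at the boundary vertices of $A$ and multiplies arclength on every boundary sub-arc of $A$. The biLipschitz hypothesis on boundary triangles needed for Lemma \ref{length preserving} holds automatically because $A$ is a conformal grid annulus: by Definition \ref{confgridanndefn} each boundary triangle is the image of a Euclidean equilateral triangle by a map that extends conformally across the relevant boundary component of $A$, and the Koebe distortion theorem supplies a uniform biLipschitz constant (exactly as in the proof of Lemma \ref{comparable arcs}). For (ii), I unwrap $B$ to a flat strip via the rescaled logarithm $w \mapsto \log(z)/\log(1+\delta)$ and rescale to the height $2$ of Lemma \ref{strip triangles} (subdividing boundary arcs once if needed to meet the $1/8$ threshold). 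Its hypotheses are met because $\mathrm{length}(f(I)) < \delta/10$ forces the transported boundary sub-intervals to be short compared to the strip height, while Lemma \ref{comparable arcs} ensures adjacent sub-intervals are comparable. Applying the periodic version of Lemma \ref{strip triangles} produces a triangulation ${\mathcal T}_S$ of the strip with straight-segment edges, angles bounded below by an absolute $\theta > 0$, and universally bounded vertex degrees; pulling ${\mathcal T}_S$ back through $\psi^{-1} \circ \exp$ gives a topological triangulation ${\mathcal T}_A$ of $A$ with the same combinatorics.

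For (iii), fix $T \in {\mathcal T}_A$ and let $T_s \in {\mathcal T}_S$ be the corresponding straight-edge triangle in the strip. I would define the $C$-quasiconformal map $T \to E$ as a composition of two $O(1)$-quasiconformal maps: first a straightening $\sigma : T \to T^*$, where $T^*$ is the Euclidean triangle whose side lengths equal the arclengths of the corresponding sides of $T$, chosen so that $\sigma$ is arclength-preserving on each side of $\partial T$; second the affine map $T^* \to E$, which is $O(1)$-quasiconformal by Lemma \ref{affine lemma} and automatically parameterizes each side of $T^*$ linearly, so it multiplies arclength on each side. The composition therefore multiplies arclength on each side of $T$, as required.

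The main obstacle will be constructing $\sigma$ with a dilatation that is absolutely bounded, independent of $A$ and $T$. This reduces to showing that $\partial T$ is uniformly chord-arc, that the three side lengths of $T$ are pairwise comparable, and that the sides meet at angles bounded away from $0$ and $\pi$; given these, $\sigma$ is produced by the arclength parameterization on $\partial T$ followed by the Tukia--MacManus biLipschitz extension (\cite{MR639966}, \cite{MR1370347}), with constants depending only on the chord-arc data. The required geometric bounds on $T$ follow from combining four ingredients: the bounded geometry of $T_s$ from Lemma \ref{strip triangles}; the bounded distortion of $\exp$ and the rescaling on the thin annulus $B$, using $\delta \leq 1/100$; the conformality of $\psi^{-1} = f^{-1}$ on interior triangles together with Koebe distortion estimates (as in the proof of Lemma \ref{comparable arcs}) controlling arclength ratios and angles; and, on boundary triangles of $A$, the explicit biLipschitz structure of $\psi$ guaranteed by Lemma \ref{length preserving}. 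Together these give absolute chord-arc, side-length, and angle bounds on every $T \in {\mathcal T}_A$, yielding the constant $C$ and the universal degree bound claimed in the lemma.
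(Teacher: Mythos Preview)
Your steps (i) and (ii) match the paper's argument. For step (iii), however, you take a detour the paper avoids entirely. The paper never constructs a separate straightening $\sigma: T \to T^*$; the required $C$-quasiconformal map from $T$ to an equilateral triangle is simply the composite $a_{T_s}\circ(\text{rescaled }\log)\circ\psi$, where $a_{T_s}$ is the affine map supplied by Lemma~\ref{affine lemma} on the corresponding strip triangle $T_s$. The dilatation bound is then immediate from composition: $\psi$ is $K$-quasiconformal by Lemma~\ref{length preserving}, the logarithm is conformal, and $a_{T_s}$ is uniformly quasiconformal since the strip triangles all have angles $\geq\theta$. No chord-arc analysis of $\partial T$ inside $A$ is needed, and Tukia--MacManus is never invoked at this step.

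Your approach aims to make the length-multiplying property hold literally on each side of $T\subset A$, whereas the paper really only secures it at the strip level (on the sides of $T_s$) and then pulls the dilatation back, invoking Lemma~\ref{equilateral2}, whose proof only uses that adjacent triangle maps agree along shared edges---which the composite above does satisfy. So your route would tighten that point, but the harder estimate you set up has a gap as written: a triangle $T\in\mathcal T_A$ can straddle both the region where $\psi=f$ is conformal and a boundary triangle of $A$ where $\psi$ is the biLipschitz replacement from Lemma~\ref{length preserving}. Your dichotomy (Koebe on ``interior'' versus biLipschitz on ``boundary triangles of $A$'') does not cover this mixed case, and since $K$-quasiconformal images of chord-arc curves are in general only quasicircles, the uniform chord-arc bound on $\partial T$ does not follow from the listed ingredients without further work (for instance, first arguing that $\psi$ is globally biLipschitz with a uniform constant).
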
 

\begin{proof}
Use the logarithm map (and a rescaling) to lift the partition of $\partial B$ to a 
partition of $\partial S$ where 
$S = \{ x+iy: 0 < y < 2\}$.
The resulting segments all have length $\leq 1/8$, so Lemma \ref{strip triangles}
applies to give a triangulation of $S$. Moreover, the degree of any vertex in this triangulation is bounded by a universal constant by Lemma \ref{strip triangles}, since for any adjacent sub-arcs $I$, $J$ on $A$, by Lemma \ref{comparable arcs} we have that the lengths of $f(I)$, $f(J)$ are comparable with a uniform constant (independent of $A$).

% by a constant depending only on (\ref{Mconstforthislemma}). Since the partition of $S$ is periodic, the triangulation of $S$ is also periodic by Lemma \ref{strip triangles}, and hence defines a defines a smooth triangulation of the annulus $B$.% As stated in the Lemma , 
%if the partitions are periodic, then the triangulation is 
%too, so it defines a smooth triangulation of the annulus $B$.

By Lemma \ref{affine lemma}, each triangle in our triangulation  of the 
strip can be 
uniformly quasiconformally mapped to an equilateral triangle 
by a map that multiplies arclength on each edge. Thus for two 
triangles sharing an edge, and mapping to equilateral triangles 
that share the corresponding edges, the maps agree along the common 
edge. Pulling this   periodic dilatation back  to $B$ 
via exponential map preserves the size of the dilatation (since
the  map is conformal).
%and preserves the length property on 
%the boundary (since the exponential map has this property). 
We then pull the triangulation back to 
$A$ via the quasiconformal 
map $ \psi:A \to B$  given by Lemma \ref{length preserving}. 
This gives a  smooth triangulation of $A$ 
and a dilatation  $\mu$  on $A$   that is uniformly bounded 
(since the dilatation of $\psi$ is) and that 
transforms the triangulation into an equilateral 
triangulation under any quasiconformal map of $A$ that 
has dilatation  $\mu$ on $A$ by Lemma \ref{equilateral2}.
\end{proof} 

We will also want to bound the sizes of the triangles
produced in the previous lemma. We will do this using 
estimates of harmonic measure and the hyperbolic metric, the definitions of which we now briefly recall. The hyperbolic metric $\rho$ on $\mathbb{D}:=D(0,1)$ is defined infinitesimally by 
\begin{equation} \rho(z)|dz|:=\frac{|dz|}{1-|z|^2}. \end{equation} 
Any domain $\Omega$ satisfying $|\Chat\setminus \Omega|>2$ is \emph{hyperbolic}, in other words the universal cover of $\Omega$ is $\mathbb{D}$, and the covering map $\phi: \mathbb{D} \rightarrow \Omega$ defines the hyperbolic metric $\rho$ on $\Omega$ via the equation: 
\begin{equation} \rho(w)|dw|:=\frac{|\phi'(z)||dz|}{1-|z|^2},\hspace{2mm} \phi(z)=w,\end{equation}
(see for instance Exercise IX.3 in \cite{MR2150803}).

We will consider harmonic measure only in simply connected domains with locally connected boundary, where the definition is as follows (see also the monograph \cite{MR2150803}). First, for an interval $I\subset\mathbb{T}$, we simply define 
\begin{equation} \omega(0, I, \mathbb{D}):=\textrm{length}(I)/2\pi.\end{equation}
 If $\Omega$ is a simply connected domain with locally connected boundary, we define harmonic measure in $\Omega$ by pulling back under a conformal map $\phi: \mathbb{D} \rightarrow \Omega$. More precisely, if $\phi: \mathbb{D} \rightarrow \Omega$ is a Riemann mapping, and $I\subset\mathbb{T}$ is an interval, then we define the harmonic measure of $J:=\phi(I)$ with respect to $w:=\phi(0)$ in $\Omega$ by the formula:
 \begin{equation} \omega(w, J, \Omega):=\omega(0, I, \mathbb{D})= \textrm{length}(I)/2\pi.\end{equation}

\begin{rem}\label{hyperbolic_metric_remark}
If $\Omega$ is a simply connected domain then the 
hyperbolic metric $\rho$ in $\Omega$ satisfies 
the well known estimate 
$$  \frac {1}{4 \cdot \dist(z, \partial \Omega)} \leq 
\rho(z) \leq \frac {1}{\dist(z, \partial \Omega)}.$$
See, e.g., equation (I.4.15) of \cite{MR2150803}.
More generally, we have 
$$   
\rho(z)  \simeq \frac {1}{\dist(z, \partial \Omega)}$$
for multiply connected domains with uniformly perfect
boundaries. A set $X$ is uniformly perfect  if
there is a constant $M < \infty$ so that for every 
$0 < r< \diameter (X)$ and every $x\in X$ there is a
$y \in X$ with $ r/M \leq |x-y| \leq r$. All round annuli $B = \{ 1< |z| < 1+\delta\}$ considered here have this property with uniform $M$.

% A round 
%annulus $B = \{ 1< |z| < 1+\delta\}$ has this property 
%as long as $\delta$ is bounded above (for $\delta 
%\gg 1$, the uniformly perfect property and 
%the lower bound for the hyperbolic metric both fail).
%{\color{red} $<$- does this refer to $\delta$ bounded above or below? Either way it seems to me these annuli are uniformly perfect for all $\delta$.}. 
%This is the case for all annuli considered here.

\end{rem}

\begin{lem} \label{harmonic} 
Suppose $S =\{x+iy:  0 < y < 1\}$ and  $I$ is an arc on the 
bottom edge of $S$ with $\ell(I) \leq 1/2$. 
Suppose $\varepsilon>0$ and  $z= x+iy \in S$  with 
	$ \varepsilon \cdot  \dist( x, I) \leq 
	y \leq  \min( \frac 12, \ell(I)/\varepsilon) $ .
Then the harmonic measure of $I$ in $S$  with respect to $z$
satisfies $ \omega(z, I, S) \geq \delta(\varepsilon) >0$.
\end{lem}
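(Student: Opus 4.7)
My approach is to reduce the estimate to the angle-subtended formula for harmonic measure in the upper half-plane $\mathbb{H}$. The key observation is that, under the hypotheses, $z$ lies low in the strip ($y\leq 1/2$) and close to $I$ ($\mathrm{dist}(x,I)\leq y/\varepsilon$), so $\omega(z,I,S)$ should be comparable to $\omega(z,I,\mathbb{H})$ with constants depending only on $\varepsilon$.

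First, I would recall that the Poisson kernel for $S$ at a point $t$ on the bottom edge is
$$P_S(z,t) \;=\; \frac{\sin(\pi y)}{2\bigl(\cosh(\pi(x-t)) - \cos(\pi y)\bigr)}.$$
Under the hypotheses, every $t\in I$ satisfies $|x-t|\leq\mathrm{dist}(x,I)+\ell(I)\leq 1/(2\varepsilon)+1/2$, so $(x-t,y)$ stays in a compact region $K_\varepsilon$. On $K_\varepsilon$, a Taylor expansion at the origin shows that the denominator is comparable (with constants depending only on $\varepsilon$) to $(x-t)^2+y^2$, while $\sin(\pi y)\geq 2y$ on $[0,1/2]$. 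Hence there exists $c(\varepsilon)>0$ such that
$$\omega(z,I,S) \;=\; \int_I P_S(z,t)\,dt \;\geq\; c(\varepsilon)\int_I\frac{y}{(x-t)^2+y^2}\,dt \;=\; c(\varepsilon)\cdot\pi\cdot\omega(z,I,\mathbb{H}),$$
reducing the problem to bounding $\omega(z,I,\mathbb{H})$ from below by a positive constant depending only on $\varepsilon$.

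For this half-plane bound, I would use the classical identity that $\pi\cdot\omega(z,I,\mathbb{H})$ equals the angle subtended by $I$ at $z$. Writing $I=[a,b]$ and assuming without loss of generality that $x\leq a$ (the case $x\in I$ is easier and $x\geq b$ is symmetric), this angle equals $\arctan\bigl(y\,\ell(I)/[y^2+(x-a)(x-b)]\bigr)$. The hypotheses then combine in a straightforward way: $y\leq\ell(I)/\varepsilon$ yields $y^2\leq \ell(I)\,y/\varepsilon$, while $\mathrm{dist}(x,I)\leq y/\varepsilon$ gives $|x-a|\leq y/\varepsilon$ and $|x-b|\leq y/\varepsilon+\ell(I)$, from which $(x-a)(x-b)$ is bounded above by a constant (depending only on $\varepsilon$) times $y\,\ell(I)$. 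So the argument of the arctangent is bounded below by a positive quantity, giving the desired $\delta(\varepsilon)>0$. The only subtlety is this last bookkeeping, since the two halves of the hypothesis on $y$ are each responsible for one term in the denominator and must be used in tandem.
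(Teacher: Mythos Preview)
Your argument is correct, but it proceeds along a quite different route from the paper's. The paper's proof is geometric: it observes that on the right isosceles triangle $T$ with hypotenuse $I$ one has $\omega(\cdot,I,S)\geq \omega(\cdot,I,Q)\geq 1/4$ (where $Q$ is the square with base $I$, the $1/4$ coming from symmetry), then argues that the hypotheses force $z$ to lie within a bounded hyperbolic distance of $T$ (with bound depending only on $\varepsilon$), and finally invokes Harnack's inequality to transport the $1/4$ lower bound to $z$. No explicit kernels are written down.

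Your approach instead computes directly with the explicit strip Poisson kernel, compares it on a compact parameter region $K_\varepsilon$ to the half-plane kernel via the Taylor behaviour of $\cosh(\pi u)-\cos(\pi y)$ near the origin, and then finishes with the subtended-angle formula in $\mathbb{H}$. This is more hands-on and in principle yields explicit constants; it also avoids both the hyperbolic metric and Harnack. The paper's argument, by contrast, is more robust: it would adapt immediately to other simply connected domains where no closed-form kernel is available, and it dovetails with the hyperbolic-metric and modulus estimates used elsewhere in the paper. Either proof is perfectly acceptable here; they simply reflect different tastes (explicit computation versus potential-theoretic comparison).
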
 

\begin{proof} 
Let $T$ be the right isosceles triangle with 
hypotenuse $I$. See Figure \ref{HarmonicMsr1}.
Then the harmonic measure of $I$ 
in $S$ with respect to a point in $T$ is greater than 
its harmonic measure in the square $Q$ with base $I$, and the 
latter is easily checked to be $\geq 1/4$ in $T$. 
Moreover, our conditions imply $z$ is   a bounded  
hyperbolic distance (in $S$)  from $T$, with 
a bound depending only on $\varepsilon$. Thus  by Harnack's 
inequality, the harmonic measure of $I$ with respect  $z$ 
is comparable to $1/4$, e.g., is bounded uniformly 
away from zero in terms of $\varepsilon$. 
\end{proof}

\begin{figure}[!htbp]
\centerline{
  \includegraphics[height=1.5in]{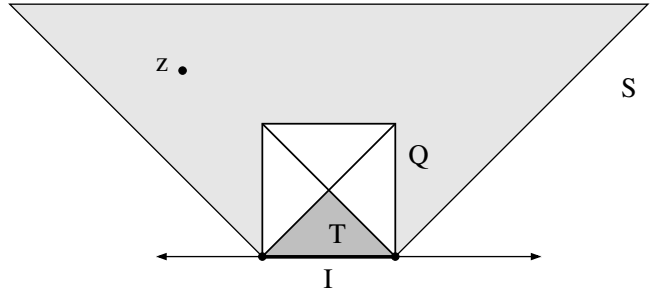}
}
\caption{ \label{HarmonicMsr1}
The harmonic measure of $I$ in the square with base 
$I$ is at least $1/4$ in all points of the shaded triangle. 
Hence it is at least $1/4$ in the strip containing the 
square. Thus it is $\simeq 1$ at any point within bounded 
hyperbolic distance of the shaded triangle. 
}
\end{figure}

\begin{cor} \label{triangle size}
The triangulation ${\mathcal T}$ of $A$  given by Lemma 
\ref{annuli triangulation} has the following 
properties. If $T \in {\mathcal T}$    does
not touch $\partial A$, then 
$$ \diameter(T)  \leq C' \max \{ \dist(z,\partial A): z \in A\}
	= O({\rm{inrad}}(A)),$$
for some fixed $C' < \infty$. 
If $T \in {\mathcal T}$  has one side $I$ on $\partial A$, then
	$$ \diameter(T)  \leq C' \diameter(I) =O({\rm{gap}}(A)).$$
This estimate also holds if 
$T \in {\mathcal T}$  has only one vertex on $\partial A$
and this vertex is the endpoint of a sub-arc $I \subset 
\partial A$. 
\end{cor}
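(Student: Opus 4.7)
The plan is to split into interior and boundary triangles. Each $T \in \mathcal{T}$ is the preimage of a triangle $T_S$ in the strip triangulation of Lemma~\ref{strip triangles} under the composition of $\psi: A \to B$ and the rescaled logarithm $B \to S$. The strip triangulation is ``Whitney-like'': triangles that do not touch $\partial S$ have Euclidean diameter comparable to their Euclidean distance to $\partial S$, while triangles that meet $\partial S$ in an edge (or a vertex) have Euclidean diameter comparable to the adjacent partition segment on $\partial S$, with constants depending only on the comparability constant $M$ from Lemma~\ref{comparable arcs}.

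For an interior triangle (i.e., $T \cap \partial A = \emptyset$), neither $T_S$ nor its image $T_B \subset B$ under the rescaled exponential touches the boundary. Since $\psi$ agrees with the conformal map $f: A \to B$ outside the boundary triangles of $A$ (Definition~\ref{confgridanndefn}) and $T$ does not touch $\partial A$, I get that $f^{-1}$ is conformal on a disk of Euclidean radius comparable to $\dist(T_B, \partial B)$ centered at a point of $T_B$. The Whitney property in the strip together with the conformality of the rescaled exponential gives $\diameter(T_B) \lesssim \dist(T_B, \partial B)$. Koebe's distortion theorem applied to $f^{-1}$ on that disk shows that $f^{-1}$ is approximately a scaling there, and the Koebe $1/4$ theorem gives $\dist(T, \partial A) \simeq |(f^{-1})'| \cdot \dist(T_B, \partial B)$; combining these yields $\diameter(T) \lesssim \dist(T, \partial A) \leq {\rm{inrad}}(A)$, as required.

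For a boundary triangle (one edge or vertex on $\partial A$), let $I$ be the relevant sub-arc. Then $T_S$ meets $\partial S$ in the corresponding partition segment $I_S$ (or has a vertex adjacent to it), and by Lemma~\ref{strip triangles} we have $\diameter(T_S) \lesssim \ell(I_S)$. The sub-arc $I$ equals $f_o^{-1}$ applied to a single grid edge of $\partial_o A_o$ (or similarly on the inner side with $f_i^{-1}$). Because $\partial_o A_o$ is a polygonal grid curve, $f_o^{-1}$ extends conformally across this grid edge by the Schwarz reflection principle, and Koebe's distortion theorem on a disk of radius comparable to the grid scale shows that $f_o^{-1}$ is bi-Lipschitz on a neighborhood of the corresponding grid triangle with uniform constants. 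Combined with the bounded quasiconformal dilatation of $\psi$ from Lemma~\ref{length preserving} and its length-multiplying behavior on the sub-arcs of $\partial A$, this transfers the strip bound to $\diameter(T) \lesssim \diameter(I)$, proving $\diameter(T) \leq C' \diameter(I) = O({\rm{gap}}(A))$.

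The main obstacle is ensuring that the constants in Koebe's estimates and in Lemmas~\ref{comparable arcs} and \ref{length preserving} are uniform in the specific conformal grid annulus $A$ (and in particular in its modulus parameter $\delta$); this uniformity ultimately follows from the polygonal structure of $\partial_o A_o$ and $\partial_i A_i$ (which permits a Schwarz reflection extension with definite domain), the uniform quasiconformal dilatation of $\psi$, and the standing hypothesis $\delta \leq 1/100$.
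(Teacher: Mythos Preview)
Your plan has the right structure, but the transfer from the strip to $A$ through the quasiconformal map $\psi$ is where both parts break down.

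For interior triangles, you assert that ``$T$ does not touch $\partial A$'' implies $\psi=f$ on $T$, i.e.\ that $T$ lies outside the boundary triangles of $A$ in the sense used in Lemma~\ref{length preserving}. This implication is false: those boundary triangles are the images $f_o^{-1}(T')$, $f_i^{-1}(T')$ of grid triangles, and they extend a definite distance into the interior of $A$; an interior element of $\mathcal{T}$ lying near $\partial A$ can certainly overlap them. On that overlap $\psi\neq f$, so applying Koebe to $f^{-1}$ tells you about $f^{-1}(T_B)$, not about $T=\psi^{-1}(T_B)$. The paper bypasses this by observing that any $K$-quasiconformal homeomorphism is a quasi-isometry of the hyperbolic metric: bounded hyperbolic diameter in the strip transfers to bounded hyperbolic diameter in $A$ regardless of where $\psi$ fails to be conformal, and then the standard estimate $\rho_A\simeq 1/\dist(\cdot,\partial A)$ converts this to the Euclidean bound.

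For boundary triangles, the step ``combined with the bounded quasiconformal dilatation of $\psi$ \dots\ this transfers the strip bound to $\diameter(T)\lesssim\diameter(I)$'' is the crux and is not justified. Bounded dilatation controls conformal invariants such as moduli and cross-ratios, not absolute scaling; a $K$-quasiconformal map can expand different parts of its domain by arbitrarily different factors, so knowing $\diameter(T_S)\lesssim\ell(I_S)$ does not by itself survive passage through $\psi^{-1}$. Moreover, your Schwarz-reflection/Koebe control on $f_o^{-1}$ pertains to the \emph{boundary triangles of $A$}, which are not the triangles of $\mathcal{T}$. The paper instead encodes the ratio $\diameter(T)/\diameter(I)$ in a quasi-invariant quantity: for $z\in T_S$ the harmonic measure $\omega(z,I_S,S)$ is bounded below (Lemma~\ref{harmonic}), hence so is the modulus of the family of arcs with endpoints on the image of $I$ that separate $z$ from the complementary boundary arc. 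This modulus is quasi-invariant under $\psi^{-1}$, and comparing it with the modulus of a round annulus forces $\dist(\psi^{-1}(z),I)\lesssim\diameter(I)$ in $A$.
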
 

\begin{proof}
By the explicit construction given in 
the proof of Lemma \ref{strip triangles}, 
any interior triangle is contained in a Whitney 
square for the strip, and so has uniformly bounded 
hyperbolic diameter in the strip. Quasiconformal 
maps are quasi-isometries of the hyperbolic 
metric; for a sharp version of this, see Theorem 5.1 of 
\cite{MR2052356}. 
Therefore the hyperbolic diameter 
of the image triangle $T$ in $A$
is also uniformly bounded. Hence the standard
estimate of hyperbolic metric discussed above (see Remark \ref{hyperbolic_metric_remark}) shows that 
$$
\diameter(f(T)) \leq C' \dist(T, \partial A) =O( {\rm{inrad}}(A)) .$$ 

On the other hand,  our construction implies that 
if $T \subset S$ is associated to a sub-arc $I \subset
\partial S$, in either of the two ways described in 
the current lemma, then by 
Lemma \ref{harmonic} we have $\omega(z,I,S) \geq \varepsilon >0$,
in other words, the harmonic measure of $I$ 
with respect to any point $z\in T$ is uniformly bounded 
above zero by a constant $\varepsilon$ that only depends 
on the comparability constant $M$ in 
the proof of Lemma \ref{strip triangles}. 
If we conformally  map the strip $S$ to the unit 
disk with $z$ going to the origin, this means that 
$I$ maps to an arc $J$  on the unit circle whose length 
is bounded uniformly away from zero. 

Now consider the path family of arcs in $\disk$ 
with both endpoints on $J$ that separate 
$0$ from $\circle \setminus J$. This has 
modulus that is bounded away from zero, since 
the length of $J$ is bounded below.  By 
the conformal invariance of modulus, 
the corresponding family in the strip $S$ has 
modulus bounded away from zero, and by 
quasi-invariance so does the image of this 
family in $A$. Now suppose by way of contradiction that $\dist(f(z), f(I))  \not= O(\diameter(f(I)))$. Then the modulus of this family would be small: this can be seen by comparing it to the modulus of the paths
connecting the two boundary components of a round 
annulus with inner boundary a circle of radius $\diameter(f(I))$ and outer boundary a circle of radius $\dist(f(z), f(I))$. This is a contradiction, and thus we conclude that $\dist(z,f(I)) \leq M\cdot \diameter (f(I))$ for 
some fixed $M< \infty$, as desired. 

%However, if 
%$\dist(f(z), f(I))  = M \cdot \diam(f(I))
%\gg \diam(f(I))$
%then the modulus of this family would be small: 
%compare it to the modulus of the paths
%connecting the boundaries of a round 
%annulus of radii $\diam(f(I))$ and 
%$M \cdot \diam(f(I))$.
%We deduce that 
%$\dist(z,f(I)) \leq M\cdot \diam (f(I))$ for 
%some fixed $M< \infty$, as desired. 
\end{proof}

% -------------------------------
\section{Triangulating Domains} 
\label{Triangulating_domains}

In Section \ref{Triangulating_domains} we prove Theorem \ref{theorem_B} following the inductive approach described in the Introduction. We start our construction of an  equilateral  triangulation of 
a planar domain $D$ 
with the following  lemma for surrounding a 
compact set with well separated contours. 

\begin{lem}  \label{surround}
Given a compact set $K \subset \complex$, there
are sets $\Gamma_n$ so that for all $n \in \naturals =
	\{ 1,2,3, \dots \} $ we have 
\begin{enumerate}
\item  each $\Gamma_n$ is  made up of a 
finite number of axis-parallel, simple polygons,
\item  each $\Gamma_n$ separates $K$ from $\infty$ and separates 
$\Gamma_{n+1}$ from $\infty$, 
\item   $    16^{-n} \leq \dist(z, K) \leq 3 \cdot 16^{-n}$
	 for every  $z \in \Gamma_n$, 
\item   $d_n=\dist(\Gamma_n, \Gamma_{n+1} )  \geq 13\cdot 16^{-n-1}$,
\item different connected components of $\Gamma_n$ are at 
	least (Euclidean) distance $ 2 \cdot 16^{-n-1}$ apart.
\end{enumerate} 
\end{lem}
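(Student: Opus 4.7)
The plan is to build $\Gamma_n$ as the boundary of a ``hole-filled'' dyadic neighborhood of $K$ at scale $16^{-n}$. Concretely, I fix $a_n := \tfrac 32 \cdot 16^{-n}$, set $V_n := \{z \in \complex : \dist(z,K) < a_n\}$, let $\mathcal S_n$ denote the (finite) collection of closed dyadic squares of side length $s_n := 16^{-n}$ that meet $V_n$, and take $W_n := \bigcup_{Q \in \mathcal S_n} Q$. Letting $U_n$ be the union of $W_n$ with all bounded connected components of $\complex \setminus W_n$, I then define $\Gamma_n := \partial U_n$. The constant $\tfrac 32$ is chosen so that $a_n \geq 16^{-n}$ and $a_n + \sqrt 2\,s_n < 3 \cdot 16^{-n}$, which are exactly the two inequalities needed for (3).

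Each component of $U_n$ is a simply connected polyomino (all holes have been filled), so its boundary is a single simple axis-parallel polygon, giving (1). For (3): any $z \in \Gamma_n$ is the limit of points in the unbounded component of $\complex \setminus W_n$, none of which lie in $V_n$, so in the limit $\dist(z,K) \geq a_n \geq 16^{-n}$; and since $z \in W_n$, it lies in some $Q \in \mathcal S_n$ meeting $V_n$, whence $\dist(z,K) \leq \diameter(Q) + a_n = (\sqrt 2 + \tfrac 32)\cdot 16^{-n} < 3\cdot 16^{-n}$. Property (2) then follows because $K \subset V_n \subset \textrm{int}(U_n)$, and because any $w \in \Gamma_{n+1}$ has $\dist(w, K) \leq 3\cdot 16^{-n-1} < a_n$, so $w \in V_n \subset \textrm{int}(U_n)$. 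Property (4) is immediate from (3):
$$ |z - w| \geq \dist(z,K) - \dist(w,K) \geq 16^{-n} - 3\cdot 16^{-n-1} = 13 \cdot 16^{-n-1}.$$

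The step requiring the most care is (5). Two distinct components of $\Gamma_n$ bound two distinct components $U_{n,i}$, $U_{n,j}$ of $U_n$. Each such $U_{n,i}$ contains in its boundary at least one dyadic square $Q_i \in \mathcal S_n$, and the fact that $Q_i \subset W_n \subset U_n$, combined with connectedness of $Q_i$, forces the entire square $Q_i$ to lie in $U_{n,i}$. If $Q_i$ and $Q_j$ belonged to the same connected component of $W_n$, they would automatically lie in the same component of $U_n$, contradicting $U_{n,i} \ne U_{n,j}$. Hence $Q_i$ and $Q_j$ lie in different components of $W_n$, and in particular share no common point at all---sharing even a single corner would connect them in $W_n$. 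An elementary check shows that two closed dyadic squares of side $s_n$ with no common point are at Euclidean distance at least $s_n = 16^{-n} \geq 2\cdot 16^{-n-1}$, which yields (5). The potentially confusing subtlety is that the hole-filling step can merge $W_n$-components into a single $U_n$-component when one surrounds another; but this is harmless for (5), since only the one-way implication ``different $U_n$-components imply different $W_n$-components'' is used.
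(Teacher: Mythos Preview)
Your approach---covering a fixed-width neighborhood of $K$ by uniform dyadic squares of side $16^{-n}$ and then filling holes---is different from the paper's, which uses a Whitney decomposition with variable-sized squares. Your route is more elementary and gives cleaner constants (your bound in (5) is $16\cdot 16^{-n-1}$ before any modifications, versus the paper's $2\cdot 16^{-n-1}$), and properties (2)--(5) are correctly derived from your construction.

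However, there is a genuine gap in your argument for (1). The implication ``each component of $U_n$ is a simply connected polyomino, so its boundary is a single simple axis-parallel polygon'' is false: a simply connected union of closed grid squares can have a non-simple boundary when two squares meet only at a corner. Concretely, take $s_n=1$, $a_n=3/2$, and $K=\{(-2,-2),(2,2)\}$. Then $\mathcal S_n$ consists exactly of the squares with lower-left corners in $\{-4,-3,-2,-1\}^2$ together with those in $\{0,1,2,3\}^2$, so $W_n=[-4,0]^2\cup[0,4]^2$. These two $4\times4$ blocks touch only at the origin; there are no bounded complementary components, so $U_n=W_n$, and $\Gamma_n=\partial U_n$ is a figure-eight through $(0,0)$---connected, but not a simple polygon. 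The paper runs into the same obstruction and fixes it explicitly by replacing each such ``cross'' at a corner with two disjoint arcs detoured through the interiors of the two squares (see the paragraph around Figure~\ref{Simple}). You need an analogous modification; once you add it, you must also revisit (5), since the modification can split one component of $\Gamma_n$ into several new components whose mutual distance is now governed by the detour scale (roughly $s_n/2$) rather than by your cross-component argument.
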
 

\begin{proof} 
Let $D$ be the unbounded connected component 
of $\complex \setminus K$. This is an 
unbounded domain with compact boundary contained in $K$. Let ${\mathcal W}$ be the family of dyadic squares defined at the beginning of Section \ref{Triangulating_strips}.
For $n=1,2,3,\dots$, let $D_n$ be the union of all
(closed) squares in ${\mathcal W}$  that intersect 
$\{z \in D: \dist(z,\partial D) \leq 16^{-n}\}$.
Each chosen square has distance $\leq 16^{-n}$
from $\partial D$, so by 
(\ref{bounds}), all the chosen squares have side 
lengths between $16^{-n-2}$ and  $ 16^{-n}$.
%In particular, squares touching two such curves for different 
%$n$ have disjoint sizes, so these collections do not overlap.
Let $\Gamma_n = \partial D_n \cap D = \partial D_n \setminus 
\partial D$. Then $\Gamma_n$ is a union of axis-parallel
polygonal curves and  each segment in $\Gamma$ is on 
the boundary of a square not in $D_n$ and therefore 
$$  16^{-n} \leq \dist(z, \partial D)$$ 
for every $z \in \Gamma_n$. 
See Figure \ref{Whitney}.

\begin{figure}[!htbp]
\centerline{
  \includegraphics[height=3in]{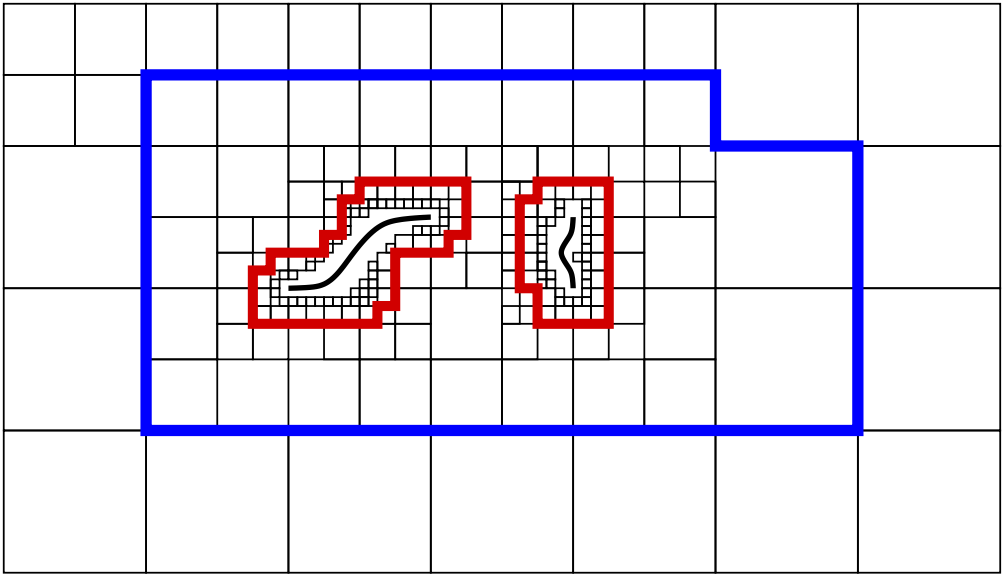}
}
\caption{ \label{Whitney}
An example of a Whitney decomposition of the complement 
of a compact set $K$. By using boundaries of unions 
of Whitney boxes, we can create polygonal contours 
that surround $K$ at approximately constant distance.
}
\end{figure}

On the other hand, every segment in $\Gamma$ is 
on the boundary of a square $Q$  inside 
$D_n$,  and hence  for every $z \in \Gamma_n$ we have
$$ \dist(z,  \partial D) 
\leq 16^{-n} + \diameter(Q) 
\leq 16^{-n} +\sqrt{2} \cdot 16^{-n} < 3 \cdot 16^{-n}.$$ 
Thus (3) holds.  To prove (4), note that 
$$  \dist(\Gamma_n,  \Gamma_{n+1}) \geq 16^{-n} - 
3 \cdot 16^{-n-1} = 13 \cdot 16^{-n-1}.$$ 

It remains to prove (5). If a connected component of $\Gamma_n$ is not a simply polygon, 
it is because there is a point $x \in \Gamma_n$ so that  exactly two 
squares $Q_1$, $Q_2$ 
intersecting $\{\dist(z, \partial D) = 16^{-n}\}$  both contain $x$ 
as corners, but these two squares do not share edge, in other words, 
$\Gamma_n$ looks like a cross at $x$. We can replace the cross by 
two disjoint arcs passing through the centers of $Q_1, Q_2$, 
as shown in Figure \ref{Simple}. Doing this (at most 
finitely often) makes each connected component of $\Gamma_n$ 
a simple  polygon, every segment of which 
has length $\geq 2\cdot 16^{-n-1}$.

\begin{figure}[!htbp]
\centerline{
  \includegraphics[height=2.0in]{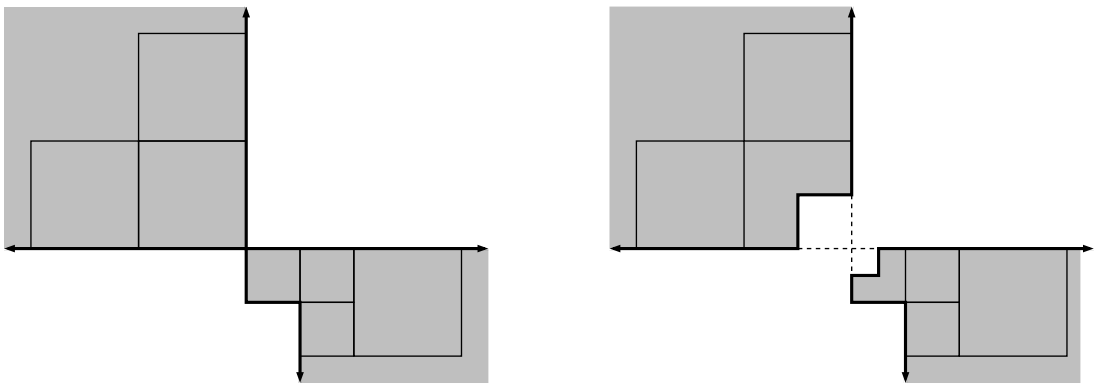}
}
\caption{ \label{Simple}
We can assume components of $\Gamma_n$ are simple curves 
by removing any self-intersections  at a point $x$ as shown.
The distance between the new curves is at least half 
the side length of the smaller square $Q$ intersecting $x$; 
by our estimates $\ell(Q) \geq \frac 14 16^{-n}$.
}
\end{figure}

Finally, any decomposition square that
is adjacent to $\Gamma_n$  contains a point 
at distance $\geq 16^{-n}$, for otherwise it would 
be contained in the interior of $D_n$ and every 
surrounding square would intersect $D_n$. Hence such a
square has side length $\geq \frac 14 \cdot  16^{-n}$.
Since any two distinct  components of $\Gamma_n$
are separated by a 
collection of such squares, the two components are 
separated by at least $ \frac 14 \cdot 16^{-n}$. 
If the modification in the last paragraph creates 
two separate components, then these components are 
at least $\frac 18 \cdot 16^{-n} = 2 \cdot 16^{-n-1}$ apart.
\end{proof}

We will build the desired 
triangulation using an inductive construction. 
The first step is given by the following lemma.

\begin{lem} \label{first step}  
For any $\varepsilon >0$ there is a (finite) 
equilateral triangulation ${\mathcal T}_0$
of the Riemann sphere so that 
\begin{enumerate} 
\item every triangle has spherical diameter $< \varepsilon$, 
\item the part of the triangulation contained in 
the unit disk is the conformal image 
of a Euclidean equilateral triangulation of 
some equilateral grid polygon under a conformal 
map  $f$ with $\frac 12 \leq |f'| \leq 2$.
\end{enumerate}
\end{lem}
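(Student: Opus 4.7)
My approach is a doubling construction. Fix $R>0$ large and $s>0$ small (both to be chosen in terms of $\varepsilon$), commensurate so that a regular hexagon $P$ of inradius $R$ centered at $0$ admits a Euclidean equilateral triangulation $\mathcal{T}_P$ of side length $s$ with $\partial P$ lying on the grid. Form a topological sphere $\Sigma$ by gluing two copies $P_1,P_2$ of $P$ (with conjugate orientations) along $\partial P$ via the identity map. Every triangle is Euclidean equilateral and all edge identifications are Euclidean reflections, so the induced triangulation $\mathcal{T}_\Sigma$ is equilateral in the sense of Definition~\ref{equil_triang_def}. By uniformization $\Sigma$ is biholomorphic to $\widehat{\mathbb{C}}$; pick such a biholomorphism $\Phi$ and let $\mathcal{T}_0$ denote the pushforward triangulation $\Phi(\mathcal{T}_\Sigma)$.

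I next normalize $\Phi$ using the symmetries of the construction. The swap $\sigma:\Sigma\to\Sigma$ exchanging $P_1,P_2$ by the identity on $\partial P$ is an anti-holomorphic involution, so $\Phi\sigma\Phi^{-1}$ is an anti-holomorphic involution of $\widehat{\mathbb{C}}$ whose fixed set $\Phi(\partial P)$ is a generalized circle. The $6$-fold rotational symmetry of $P$ extends to a holomorphic action on $\Sigma$. Normalize $\Phi$ so that it sends the centers of $P_1,P_2$ to $0,\infty$ respectively and conjugates the hexagonal rotation to $w\mapsto e^{i\pi/3}w$; then $\Phi(\partial P)$ is forced to be a round circle centered at $0$. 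Fix the remaining scaling parameter by $\Phi_1'(0)=1$, where $\Phi_1:=\Phi|_{P_1}$.

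Since $D(0,R)\subset P_1$, the map $\Phi_1$ is univalent on $D(0,R)$ with $\Phi_1(0)=0$ and $\Phi_1'(0)=1$, so Koebe's distortion and growth theorems give, for $|z|\leq tR$,
\begin{equation*}
\frac{1-t}{(1+t)^3}\leq|\Phi_1'(z)|\leq\frac{1+t}{(1-t)^3},\qquad \Phi_1(D(0,tR))\supset D\!\left(0,\tfrac{tR}{(1+t)^2}\right).
\end{equation*}
Choose $t_0\in(0,1)$ so that both bounds on $|\Phi_1'|$ lie in $[1/2,2]$ (e.g. $t_0=0.15$). For $R$ large enough (depending only on $t_0$) one has $\Phi_1(D(0,t_0R))\supset\mathbb{D}$, and by univalence $\Phi_1^{-1}(\mathbb{D})\subset D(0,t_0R)$. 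Moreover Koebe's $\tfrac14$-theorem gives $\Phi_1(P_1)\supset D(0,R/4)$, so once $R\geq 4$ the circle $\Phi(\partial P)$ has radius $>1$ and no triangle of $\mathcal{T}_0$ coming from $P_2$ meets $\mathbb{D}$. Letting $Q$ be the union of those triangles of $\mathcal{T}_P$ whose image under $\Phi_1$ is contained in $\mathbb{D}$ (a grid polygon, with $Q\subset D(0,t_0R)$ once $s$ is small), the restriction $f:=\Phi_1|_Q$ is the required conformal map witnessing~(2).

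For~(1), $\Phi_1$ is the classical Riemann map from the hexagon to a round disk; $|\Phi_1'|$ extends continuously to $\overline{P_1}$, vanishing like $|z-c|^{1/2}$ at each corner $c$ of $P$, so $M_0:=\sup_{\overline{P_1}}|\Phi_1'|$ is finite, and under the scaling $\Phi_1(z)=R\cdot h(z/R)$ (where $h$ is the Riemann map of the unit hexagon) $M_0$ is independent of $R$. Every $T\in\mathcal{T}_P$ then has Euclidean diameter $\leq M_0 s$ under $\Phi_1$, which bounds its spherical diameter (up to an $O(1)$ factor on the bounded portion of $\Phi_1(P_1)$, and with extra $|w|^{-2}$ decay near $\infty$). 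The analogous bound on the $P_2$ side follows by the $\sigma$-symmetry. Choosing $s$ small completes~(1). The main subtlety is the symmetry argument in the second paragraph, ensuring that the image of $\partial P$ is a round circle; the six corners of $P$ become cone points of angle $4\pi/3$ in $\Sigma$, but they sit on $\partial U_1$ far from $\mathbb{D}$ when $R$ is large, so they do not interfere with the Koebe estimates.
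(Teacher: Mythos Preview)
Your proof is correct and follows the same strategy as the paper --- build a flat polyhedral sphere, equilaterally triangulate in the flat metric, uniformize, and use Koebe to control $|f'|$ on the preimage of $\mathbb{D}$ --- but with a different polyhedron. The paper simply glues the four faces of a regular tetrahedron and repeatedly subdivides each face into four smaller equilateral triangles; it then normalizes so that one face covers a large disk around the origin, and asserts (without further detail) that this gives both~(1) and~(2). Your doubled hexagon achieves the same thing, at the cost of a longer normalization argument (the $\sigma$-symmetry and the $6$-fold rotation needed to force $\Phi(\partial P)$ to be a round circle centered at~$0$). What your approach buys is a single large flat chart $P_1$ containing $D(0,R)$, so the Koebe distortion bound $|f'|\in[1/2,2]$ on $\Phi_1^{-1}(\mathbb{D})$ is fully explicit, and the spherical-diameter estimate is carried out rather than merely asserted. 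The tetrahedron is a bit cleaner structurally (no doubling, and the figures in the paper make the uniformizing map concrete), but the paper's two-sentence proof leaves exactly the Koebe and spherical-diameter details that you supply. Both constructions produce cone points (angle $\pi$ at the tetrahedron vertices, angle $4\pi/3$ at your hexagon corners) that sit far from $\mathbb{D}$ after normalization, so neither interferes with condition~(2).
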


\begin{proof}
The four sides of a equilateral tetrahedron give 
an equilateral triangulation of the sphere. By 
repeated dividing each Euclidean triangle into 
four smaller equilateral triangles, we may make
every triangle on the sphere as small as we wish. 
If we normalize so that one side of the original 
tetrahedron covers  a large disk around the 
origin, then the second 
condition above is also satisfied.
See Figures \ref{equi_tetra} and \ref{equi_sphere}.
\end{proof} 

\begin{figure}[!htbp]
	\begin{minipage}{1.5in}
\centerline{
  \includegraphics[trim = 0 0 0 0, clip, height=1.25in]{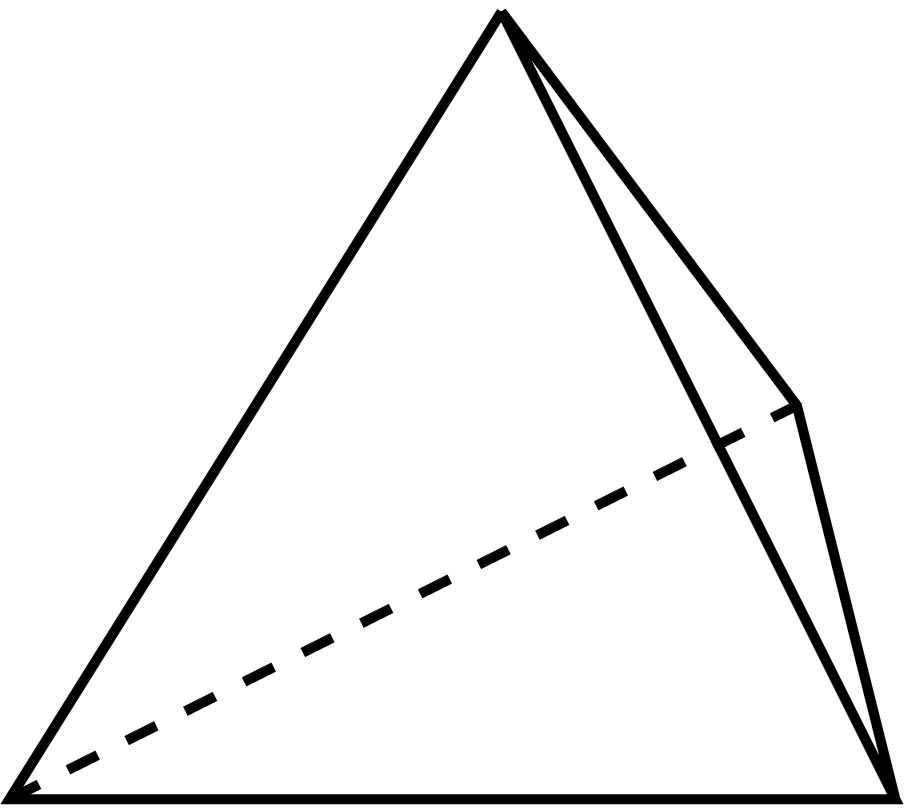}
}
\centerline{
  \includegraphics[trim = 0 0 0 0, clip, height=1.5in]{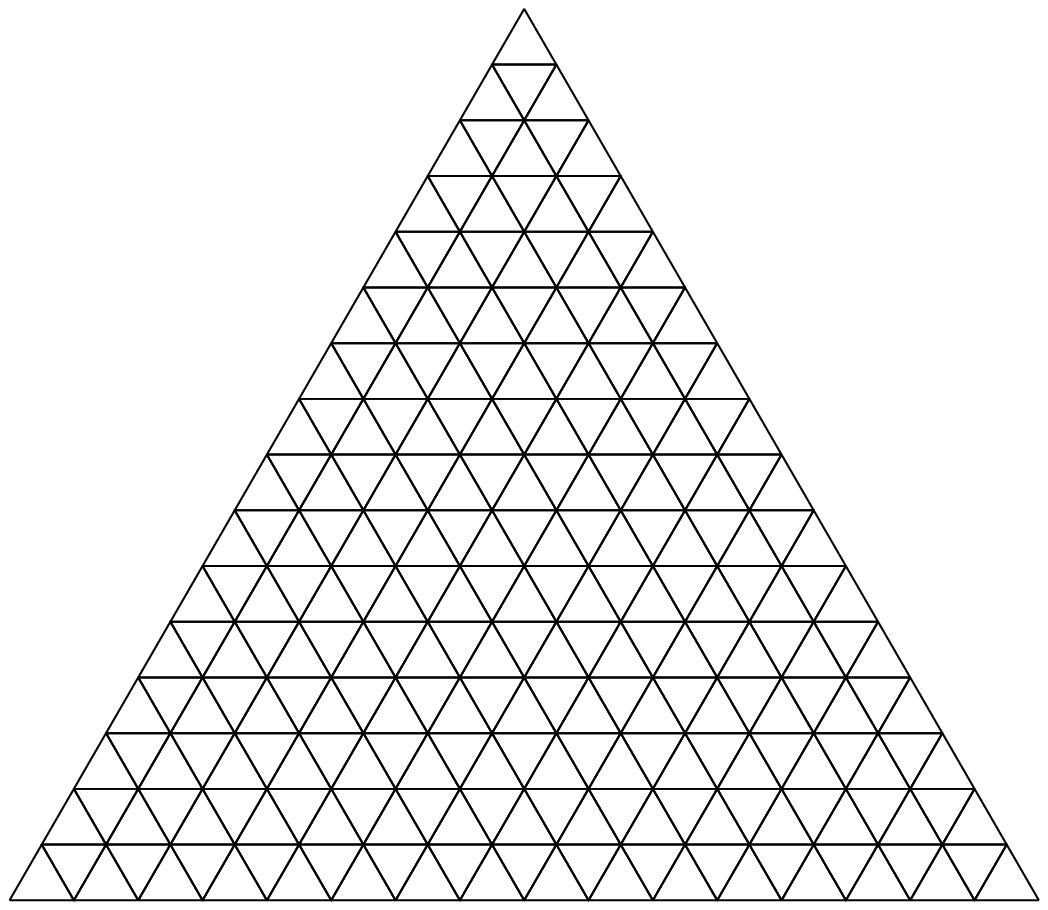}
}
\end{minipage}
	\hphantom{xx}
\begin{minipage}{3in}
\centerline{
  \includegraphics[trim = 350 75 300 50, clip, height=3in]{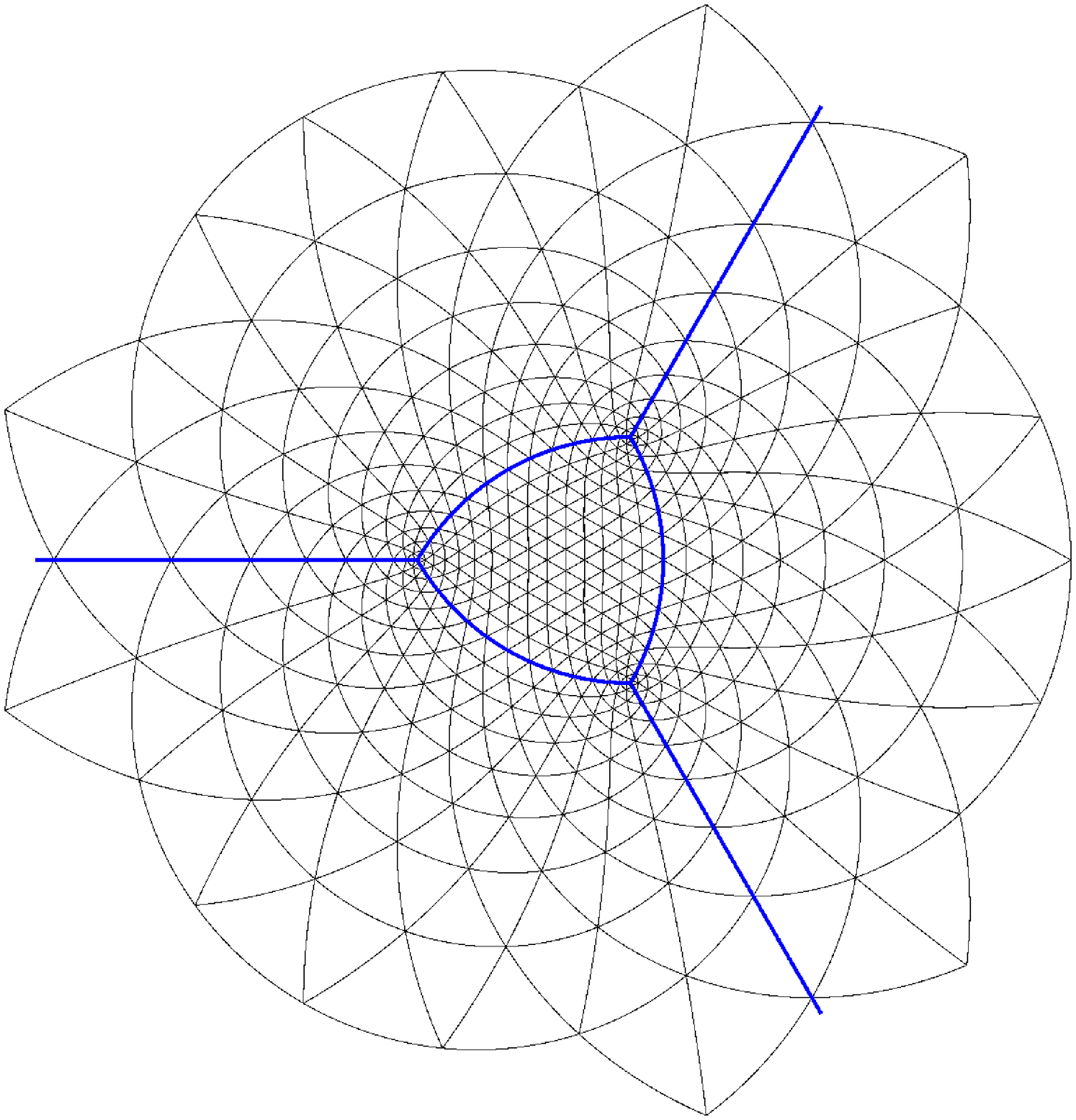}
}
\end{minipage}
\caption{ \label{equi_tetra}
An equilateral tetrahedron with the flat metric 
on each side can be conformally mapped 
to the sphere by the uniformization theorem. 
%If we divide each face into many small equilateral 
%triangles, the conformal images form an equilateral 
%triangulation of the sphere with elements as 
%small as we wish. 
Here we plot part of the image
in the plane: the thick edges are the images 
of the edges of the tetrahedron, and the triangulation 
is invariant under reflection in these edges. The 
center region is a Reuleaux triangle with 
interior angles of $120^\circ$ (each edge 
is a circular arc centered at the opposite vertex). 
See Figure \ref{equi_sphere} for the 
same triangulation drawn on a sphere.
%By rescaling and increasing the number of triangles, 
%we can ensure the unit disk is contained near the 
%center of the central region and covered by 
%nearly Euclidean equilateral triangles.
}
\end{figure}

\begin{figure}[!htbp]
\begin{minipage}{3in}
\centerline{ \includegraphics[trim = 270 150 270 130, clip, height=3in]{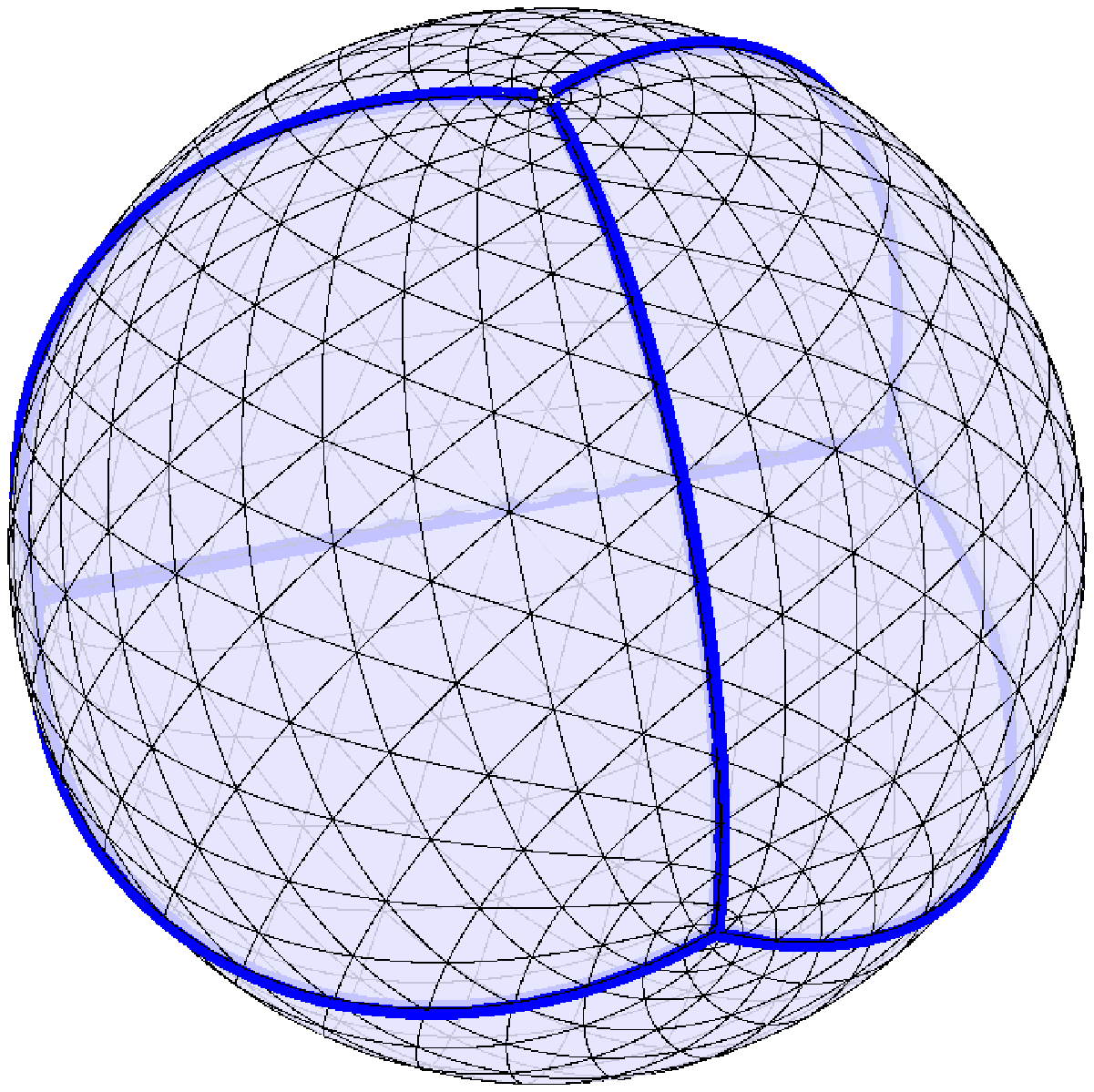}
}
\end{minipage}
\caption{ \label{equi_sphere}
The equilateral triangulation from Figure 
\ref{equi_tetra} projected stereographically 
onto the sphere.
}
\end{figure}

\noindent \emph{Proof of Theorem B. } Let $D$, $\eta$ be as in the statement of Theorem B. We claim that it suffices to prove the Theorem in the special case that 
\begin{equation}\label{WLOG_assumption} \infty\in D\textrm{ and } K:=\partial D\subset D(0,1/16). \end{equation}
Indeed, if we are then given an $\eta$ and a domain $D$ which does not satisfy (\ref{WLOG_assumption}), we may apply a M{\"o}bius transformation $M$ (defined by a spherical isometry moving a point in $D$ to $\infty$, followed by a scaling map $z\mapsto\lambda z$) so that $M(D)$ satisfies (\ref{WLOG_assumption}). Applying the special case of the Theorem to $M(D)$ and an appropriately rescaled version of $\eta$ then gives a triangulation $\mathcal{T}$ of $M(D)$ so that $M^{-1}(\mathcal{T})$ is the desired triangulation of $D$. Henceforth, we assume (\ref{WLOG_assumption}).

%Applying the special case of the Theorem to $M(D)$ and an appropriately rescaled version of $\eta$ then yields the general case. So we may henceforth assume (\ref{WLOG_assumption}).

%then yields the general case. So we may henceforth assume (\ref{WLOG_assumption}).

Let $\{\Gamma_n\}_{n=0}^\infty$ be the polygonal contours surrounding $\partial D$ obtained by applying Lemma \ref{surround} to $K=\partial D$. Fix an $N\geq20$ so that $N/2$ satisfies the conclusions of Lemma \ref{thickness} with $\varepsilon=1/10$. Let 
\begin{equation}\label{Unepsilondefn} U_{n, \varepsilon} :=\{z: \dist(z, \Gamma_n) \leq  N \cdot \varepsilon\}. \end{equation}

%, and define $\Omega_n$ to be the union of the bounded components of $\Chat\setminus\Gamma_n$.

We will now fix a sequence $(\varepsilon_n)_{n=1}^\infty$ by specifying each $\varepsilon_n$ be sufficiently small so as to satisfy the following finite set of conditions. First, let $C>0$ be the maximum of the constant $C$ in Lemma \ref{annuli triangulation} and $K$ in Lemma \ref{boundary triangles}. As argued in the proof of Proposition \ref{phi_close_to_id} (see also Lemma 2.1 of \cite{MR3232011}), there exists a constant $a_n>0$ such that any $C$-quasiconformal mapping $\phi:\mathbb{C}\rightarrow\mathbb{C}$ normalized to fix $0$, $1$ whose dilatation is supported on a region of (Euclidean) area $<a_n$ satisfies
\begin{equation}\label{phismallpert} d(\phi(z),z)<16^{-n-2} \textrm{ for all } z\in\mathbb{D}. \end{equation} 
We specify $\varepsilon_n$ be small enough so that $U_{n, \varepsilon_n}$ has area $<a_n$, and set
\begin{equation}\label{Undefn} U_n:=U_{n, \varepsilon_n}=\{z: \dist(z, \Gamma_n) \leq  N \cdot \varepsilon_n\}. \end{equation}

Next, we note that any $C$-quasiconformal map $\phi:\mathbb{C}\rightarrow\mathbb{C}$ normalized as above is H\"older continuous with uniform bounds (see for instance Section I.4.2 of \cite{MR0344463}). In particular there exist constants $M$, $\alpha>0$ so that
\begin{equation}\label{holderestimate} \diam(\phi(E)) \leq M\diam(E)^\alpha \textrm{ for any } E\subset\Chat\end{equation}
for any normalized $C$-quasiconformal map $\phi:\mathbb{C}\rightarrow\mathbb{C}$.
Let $C'<\infty$ be as in the conclusion of Corollary \ref{triangle size}. By (\ref{holderestimate}) and the Lipschitz-equivalence of the spherical and Euclidean metrics on $\mathbb{D}$, we may specify that $\varepsilon_n$ be small enough so that if $E\subset\mathbb{D}$, then:
\begin{equation}\label{forthediamestimate} \diameter(E)<NC'\varepsilon_n \implies \diam(\phi(E)) < \eta(16^{-n-2}).  \end{equation}
Lastly, we specify that $\varepsilon_n$ be sufficiently small so that:
%\begin{equation}\label{lastepsiloncondition}16^{-n} - N \varepsilon_n > \max( \frac 12 \cdot 16^{-n}, N \varepsilon_n)\textrm{ and } \end{equation} 
\begin{equation}\label{actuallastcondition} N\varepsilon_n<16^{-n-2}, \end{equation}
\begin{equation}\label{squeezedcondition} \varepsilon_n<\eta(16^{-n-2})\textrm{, and } \end{equation}
\begin{equation}\label{lastlastcondition} \textrm{ if } f: U_n \rightarrow \{z : 1 < |z| < 1+\delta\} \textrm{ is conformal, then } \delta<1/100. \end{equation}

We will now recursively define a sequence of triangulations $\{\mathcal{T}_n\}_{n=0}^\infty$ of $\Chat$. First we introduce the following notation. Given a set $E\subset\mathbb{C}$, we denote the union of the unbounded components of $\Chat\setminus E$ by $\textrm{ex}(E)$, and by $\textrm{in}(E)$ the union of the bounded components of $\Chat\setminus E$. Let ${\mathcal T}_0$ be the triangulation obtained by applying Lemma \ref{first step} to $\varepsilon=\varepsilon_0$. We now describe how to define the triangulation ${\mathcal T}_{n}$, given ${\mathcal T}_{n-1}$. Our inductive hypothesis will be:
\begin{itemize} \item [$(\star)$] Each component of  
\begin{equation}\label{confgridimage}  \bigcup\left\{T\in\mathcal{T}_{n-1} : T\subset\textrm{in}(U_{n-1})\right\}\end{equation}
 is the conformal image of an equilateral grid polygon, and
  \item [$(\star\star)$] If $T\in\mathcal{T}_{n-1}$ satisfies $T\subset\textrm{in}(U_{n-1})$, then $\diam(T)<\varepsilon_{n-1}$.
  \end{itemize}
 where we note that $(\star)$, $(\star\star)$ hold true for $n=1$ by Lemma \ref{first step} and since (\ref{confgridimage}) lies inside $\mathbb{D}$ by (\ref{WLOG_assumption}).

Define $V_n$ to be the triangles in ${\mathcal T}_{n-1}$ that intersect $\textrm{ex}(U_n)$, so that
\begin{equation}\label{exprop} \textrm{ex}(U_n) \subset \bigcup_{T\in V_n}T. \end{equation}
Let $\mathcal{E}_n$ be a triangulation of $\mathbb{C}$ by Euclidean equilateral triangles of spherical diameter $<\delta_n$, where $\delta_n$ is sufficiently small so that
\begin{equation}\label{deltaperturbed} M\delta_n^\alpha<\varepsilon_{n+1}. \end{equation}
Denote by $W_n$ the union of triangles in $\mathcal{E}_n$ that intersect $\textrm{in}(U_n)$, so that
\begin{equation}\label{inprop} \textrm{in}(U_n) \subset \bigcup_{T\in W_n}T. \end{equation}

%Let $\mathcal{E}$ be a triangulation of $\mathbb{C}$ by Euclidean equilateral triangles of side length $1$, and denote by $W_n$ the union of triangles in $\varepsilon_{n+1}\cdot {\mathcal E}$ that intersect $\textrm{in}(U_n)$, so that
%\begin{equation}\label{inprop} \textrm{in}(U_n) \subset \bigcup_{T\in W_n}T. \end{equation}

The region ``between'' $V_n$ and $W_n$ (or, more precisely, $\textrm{ex}(W_n)\cap\textrm{in}(V_n)$) consists of a union of topological annuli, one for each component of $\Gamma_n$ (see Figure \ref{Separate}). Let $A$ denote such an annulus. We claim that $A$ is a conformal grid annulus (see Definition \ref{confgridanndefn}), where we define the vertices on $\partial_oA$ as the vertices of the triangles $V_n$ lying on $\partial_o A$, and similarly the vertices on $\partial_iA$ are defined as the vertices of the triangles $W_n$ which lie on $\partial_iA$. Indeed, let $f_i$ be the identity mapping, and let $A_i$ be the union of triangles in $\mathcal{E}_n$ that are a subset of $A$ (with the inner boundary $\partial_iA_i$ coinciding with $\partial_iA$). Since $A_i$ is an equilateral grid annulus, we have shown the first half of Definition \ref{confgridanndefn} (namely conditions (1)-(3) for $k=i$).

\begin{figure}[!htbp]
\centerline{
  \includegraphics[height=2.5in]{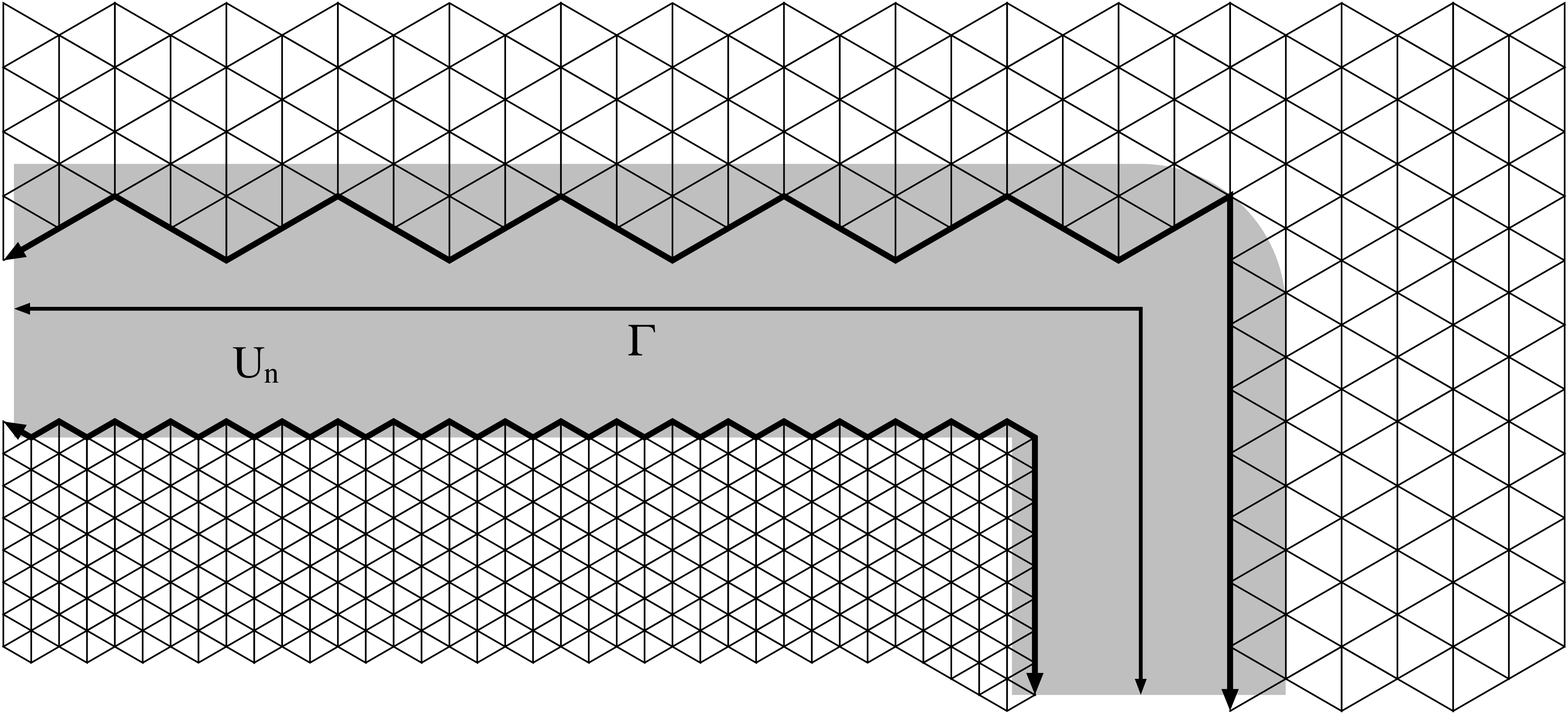}
}
\caption{ \label{Separate}
The shaded region represents part of  
$U_n$. The region $\textrm{in}(U_n)$ 
is covered 
by a Euclidean equilateral triangulation with 
side lengths $\epsilon_{n+1}$ (the small 
triangles). The region $\textrm{ex}(U_n)$ is covered
by elements of the triangulation ${\mathcal T}_{n-1}$
constructed at the previous stage (drawn here 
as larger equilateral triangles, but they need 
not be Euclidean triangles, only conformal 
images of such). 
The results of the previous section are used to 
triangulate the intervening region with the given 
boundary vertices, and then 
a quasiconformal correction will be applied to  
obtain an equilateral triangulation of the sphere.
}
\end{figure}

To finish verifying that $A$ is a conformal grid annulus, first note that by (\ref{exprop}) and (\ref{inprop}), we have $A\subset U_n$. By (\ref{Undefn}) and the inductive hypothesis $(\star\star)$, there is a topological annulus $\tilde{A}_o\subset A$ consisting of a union of triangles in $\mathcal{T}_{n-1}$ so that $\partial_o\tilde{A}_o=\partial_oA$, and \begin{equation}\label{innerboundaryinside} \partial_i\tilde{A}_o \subset \textrm{in}(\Gamma\cap A).\end{equation}
By Lemma \ref{surround}(4) and (\ref{Undefn}), (\ref{actuallastcondition}), we have that:
\begin{equation} U_n\subset\textrm{in}(U_{n-1}).\end{equation}
Thus, we conclude from the inductive hypothesis $(\star)$ that there is a conformal mapping $f_o: A\rightarrow f_o(A)$ so that $f_o$ and $A_o:=f_o(\tilde{A}_o)$ satisfy conditions (1)-(3) of the Definition \ref{confgridanndefn} of conformal grid annulus for $k=o$.

We have now proven that $A$ is a conformal grid annulus. By (\ref{lastlastcondition}), and our choice of $N$ together with Lemma \ref{thickness}, we have that both hypotheses of Lemma \ref{annuli triangulation} are satisfied. Hence, by Lemma \ref{annuli triangulation}, there is a triangulation $\tilde{\mathcal{T}}_n$ of $A$ (and of every other annular component of the region between $V_n$ and $W_n$) such that each triangle $T\in\tilde{\mathcal{T}}_n$ can be mapped to a Euclidean equilateral triangle by a $C$-quasiconformal map $\phi^T$ that multiplies arclength on each side of $T$. This induces a dilatation $\mu_T:=\phi^T_{\overline{z}}/\phi^T_z$ on each such triangle $T\in\tilde{\mathcal{T}}_n$. Extend $\tilde{\mathcal{T}}_n$ to a triangulation of $\Chat$ by adding the triangles in $V_n$ and $W_n$. 

We now define 
\begin{equation}\label{Tndefn} \mathcal{T}_n:=\phi_n(\tilde{\mathcal{T}}_n), \end{equation}
 where $\phi_n$ is a normalized quasiconformal solution to the Beltrami equation
 \begin{equation} \phi_{\overline{z}}=\mu\cdot\phi_z \end{equation}
  for $\mu$ defined a.e. in $\mathbb{C}$ as follows. Let
  \[ \mu := \begin{cases} 
      \mu_T & \textrm{ if } T\subset A, \\
      0 & \textrm{ if } T\in W_n, \\
      0 & \textrm{ if } T\in V_n \textrm{ and } T\cap\partial_oA=\emptyset. \\
   \end{cases}
\]
It remains to define $\mu$ on any triangle $T\in V_n$ intersecting $\partial_oA$. Note that $T\in\mathcal{T}_{n-1}$, and hence by the inductive hypothesis  $(\star)$, there is a conformal mapping $f$ of $T$ onto a Euclidean equilateral triangle $f(T)$. Also by the inductive hypothesis and our choice of $N$, the hypotheses of Lemma \ref{boundary triangles} are satisfied and hence there exists a $C$-quasiconformal mapping $g: T \rightarrow g(T)$ such that:
\begin{enumerate} \item $g(T)$ is a Euclidean equilateral triangle,
\item $g$ is length-multiplying on the edges of $T$ lying on $\partial A$, and
\item $g=f$ on the remaining edges of $T$. 
\end{enumerate}
Set $\mu=g_{\overline{z}}/g_z$ on $T$. This finishes the definition of $\mu$, and hence defines the triangulation (\ref{Tndefn}). The definition of $\mu$ was so as to ensure that for any adjacent triangles $T$, $T'$ in $\mathcal{T}_n$, there is an anti-conformal map $T\rightarrow T'$ satisfying Definition \ref{equil_triang_def} (see the proof of Lemma \ref{equilateral2} for a similar argument), so that $\mathcal{T}_n$ is an equilateral triangulation. Note furthermore that since $\mu=0$ in $W_n$, we have $\phi_n$ is conformal in $W_n$. Hence, since $W_n$ is an equilateral grid polygon, $(\star)$ holds with $n$ replacing $n-1$. Moreover, if $T\in\mathcal{T}_n$ satisfies $T\subset\textrm{in}(U_n)$, then $T$ is the image under $\phi_n$ of a triangle of diameter $\delta_n$, hence by (\ref{holderestimate}) and (\ref{deltaperturbed}) we have $(\star\star)$ also holds with $n$ replacing $n-1$. This concludes our recursive definition of the triangulations $(\mathcal{T}_n)_{n=0}^\infty$.

%and let $f: T\rightarrow T'$ be the conformal mapping of $T$ onto a Euclidean equilateral triangle $T'$ in an equilateral grid polygon $\Gamma$. The mapping $f^{-1}: \Gamma \rightarrow f^{-1}(\Gamma)$ extends to an equilateral grid polygon $\Gamma'$ strictly containing $\Gamma$, such that the annulus between $\Gamma'$ and $\Gamma$ has large thickness. Thus Lemma \ref{boundary triangles} applies to produce a quasiconformal mapping $g: T \rightarrow T'$ which is length-multiplying on the edge of $T$ lying on $\partial A$. Set $\mu=g_{\overline{z}}/g_z$ on $T$. 

We now define the triangulation $\mathcal{T}_\infty$ satisfying the conclusion of Theorem B. Let $n\in\mathbb{N}$ and $T\in\mathcal{T}_n$ be a triangle so that $\phi_n^{-1}(T)\subset\textrm{ex}(U_n)$. Then 
\begin{equation}\label{seqoftri} \phi_{n+k}\circ...\circ\phi_{n+1}(T)\in\mathcal{T}_{n+k} \textrm{ for all } k\geq0. \end{equation}
Since the maps $(\phi_n)_{n=1}^\infty$ are uniformly Cauchy by (\ref{phismallpert}), the sequence (in $k$) of triangles (\ref{seqoftri}) converges to a triangle $T_\infty$ (with vertices/edges of $T_\infty$ defined as the limit of vertices/edges of (\ref{seqoftri})), and we define by $\mathcal{T}_\infty^n$ the collection of all such limit triangles. By our definition of $\mathcal{T}_n$, we have $\mathcal{T}_\infty^n\subset\mathcal{T}_\infty^{n+1}$. Define 
\begin{equation} \mathcal{T}_\infty:=\bigcup_{n=1}^\infty\mathcal{T}_\infty^n. \end{equation}
We claim that $\mathcal{T}_\infty$ is an equilateral triangulation of $D$ satisfying the conclusions of Theorem B.

First we show that
\begin{equation}\label{setequalityforfinaltriang}\bigcup_{T\in\mathcal{T}_\infty}T=D. \end{equation}
If $T\in\mathcal{T}_\infty$, there exists $n\in\mathbb{N}$ so that 
\begin{equation}\label{Tlimdefn} T=\lim_{k\rightarrow\infty}\phi_{n+k}\circ...\circ\phi_{n+1}(T')  \textrm{ for some } T' \in \mathcal{T}_n \textrm{ satisfying }  \phi_n^{-1}(T')\subset\textrm{ex}(U_n).\end{equation}
In particular, by Lemma \ref{surround}(3) and since $U_n$ surrounds $\Gamma_n$ we have that
\begin{equation} d(T', \partial D) > 16^{-n}. \end{equation}
That $T\subset D$ now follows from (\ref{phismallpert}) and (\ref{Tlimdefn}). On the other hand, if $z\in D$, then by Lemma \ref{surround} we have that
\begin{equation} z\in\bigcup_{T\in\mathcal{T}_n}T \textrm{ and } d\left(z,\partial\left(\bigcup_{T\in\mathcal{T}_n}T\right)\right)>2\cdot16^{-n}\end{equation}
for sufficiently large $n$. Thus, by (\ref{phismallpert}), we have that 
\begin{equation} z\in \phi_{n+k}\circ...\circ\phi_{n+1}\left( \bigcup_{T\in\mathcal{T}_n}T \right)\textrm{ for all } k, \end{equation}
and hence $z\in\bigcup_{T\in\mathcal{T}_\infty}T$. Thus we have proven (\ref{setequalityforfinaltriang}). 

That $\mathcal{T}_\infty$ is a triangulation follows from the definition of $\mathcal{T}_\infty$ as a limit of the triangulations $\mathcal{T}_n$, and so by (\ref{setequalityforfinaltriang}), $\mathcal{T}_\infty$ is a triangulation of $D$. In order to show that $\mathcal{T}_\infty$ is an equilateral triangulation, we need to show that there is an anti-conformal reflection between any two adjacent triangles $T$, $T' \in \mathcal{T}_\infty$ as in Definition \ref{equil_triang_def}. This follows since there are adjacent triangles $T_n$, $T_n'\in\mathcal{T}_n$ limiting on $T$, $T'$ (respectively), and the anti-conformal reflection mappings $T_n\rightarrow T_n'$ limit on the desired anti-conformal reflection $T\rightarrow T'$.

We now bound the degree of any vertex in $\mathcal{T}_\infty$. First note that for a vertex $v$ in $\mathcal{T}_n$, the degree of $v$ is either $6$ (this is the degree of any vertex in $\mathcal{E}_n$ or in $\mathcal{T}_0$), or else the degree of $v$ is bounded by the universal constant on the degree of any vertex arising by the application of Lemma \ref{annuli triangulation}. In particular, the degree of $v$ in $\mathcal{T}_n$ is bounded independently of $n$, $v$, $D$ and $\eta$. Hence the degree of any vertex $v$ in $\mathcal{T}_\infty$ is bounded independently of $v$, $D$ and $\eta$. 

% Indeed, it suffices to demonstrate such a bound for each vertex in each triangulation $\mathcal{T}_n$, where such a bound is given by the maximum of $6$ (the degree of $\mathcal{E}_n$), the degree of the initial triangulation $\mathcal{T}_0$ coming from Lemma \ref{first step}, and the universal constant on the degree of any vertex arising by the application of Lemma \ref{annuli triangulation}. 

Finally, we prove (\ref{eta bound}). Let $z\in D$. Fix $n$ so that
\begin{equation} z\in\textrm{in}(\Gamma_{n-1})\cap\textrm{ex}(\Gamma_n). \end{equation}
By Lemma \ref{surround}(3), we then have that:
\begin{equation} \dist(z,\partial D)\geq16^{-n}. \end{equation}
Hence, in order to prove (\ref{eta bound}), it suffices to show that any triangle $T\in\mathcal{T}_\infty$ containing $z$ satisfies
\begin{equation}\label{finalWTS} \diam(T)\leq\eta(16^{-n}). \end{equation}
Denote by $\mathcal{A}$ the collection of triangles in $\mathcal{T}_{n+1}$ that intersect
\begin{equation} \textrm{in}(U_{n-2})\cap\textrm{ex}(U_{n+1}). \end{equation}
By definition of $\mathcal{A}$, one of the following must hold for each triangle $T$ in $\mathcal{A}$:
\begin{enumerate}
\item $\phi_{n-2}^{-1}(T)\in\mathcal{E}_{n-2}$,
\item $T\subset U_{n-1}$,
\item $\phi_{n-1}^{-1}(T)\in\mathcal{E}_{n-1}$,
\item $T\subset U_n$, or
\item $\phi_{n}^{-1}(T)\in\mathcal{E}_{n}$.
\end{enumerate}
Let
\begin{equation} T_k:=\phi_{n+k}\circ...\circ\phi_{n+1}(T) \textrm{ for } T\in\mathcal{A}. \end{equation}
We claim that 
\begin{equation}\label{TKWTS} \diam(T_k)<\eta(16^{-n}) \textrm{ for all } k\geq1 \textrm{ and } T\in\mathcal{A}. \end{equation}
Indeed, by (\ref{deltaperturbed}), for $T$ as in cases (1), (3), (5) above we have that $\diam(T_k)$ is bounded by $\varepsilon_{n-2}$, $\varepsilon_{n-1}$, $\varepsilon_{n}$ (respectively). Hence (\ref{TKWTS}) follows for $T$ as in cases (1), (3), (5) by (\ref{squeezedcondition}). For $T$ as in cases (2), (4) we have by Lemma \ref{triangle size} that $\diameter(T)$ is bounded by $NC'\varepsilon_{n-2}$,  $NC'\varepsilon_{n-1}$ (respectively). Hence, applying (\ref{forthediamestimate}) to $\phi=\phi_{n+k}\circ...\circ\phi_{n+1}$ finishes the proof of (\ref{TKWTS}). Moreover, by Lemma \ref{surround}(4), the definition of $U_n$ and (\ref{phismallpert}), (\ref{actuallastcondition}), we have that $z\in\cup_{T\in\mathcal{A}}T_k$ for all $k$. Hence, by (\ref{TKWTS}), any triangle $T$ in $\mathcal{T}_\infty$ containing $z$ satisfies (\ref{finalWTS}), as needed.
\qed
%Finally, we prove (\ref{eta bound}). By Lemma \ref{surround}(3), it suffices to show that each triangle $T\in\mathcal{T}_\infty$ which intersects $\textrm{ex}(\Gamma_n)$ satisfies
%\begin{equation} \diam(T)<\eta(16^{-n}). \end{equation}

%We can now deduce Theorem \ref{desired_triangulation} from Theorem \ref{theorem_B}:

%\noindent \emph{Proof of Theorem \ref{desired_triangulation}.} asdf

\vspace{5mm}

\end{document}